\numberwithin{figure}{section}
\numberwithin{equation}{section}
\numberwithin{table}{section}
\numberwithin{mycounter}{section}
\NewDocumentCommand{\declthm}{m m O{plain}}{
  \declaretheorem[
  name = #2,
  sibling = mycounter,
  style = #3
  ]
  {#1}
}
\newtheorem{thmA}{Theorem}
\newcommand{\Q}{\mathbb{Q}}
\newcommand{\R}{\mathbb{R}}
\newcommand{\fiber}{\mathrm{fiber}}
\DeclareMathOperator{\MC}{MC}
\newcommand{\op}{\mathsf}
\newcommand{\p}{\partial}
\newcommand{\Conf}{\mathrm{Conf}}
\newcommand{\FM}{\mathsf{FM}}
\newcommand{\Gra}{\mathsf{Gra}}
\newcommand{\BVGra}{\mathsf{BVGra}}
\newcommand{\Graphs}{\mathsf{Graphs}}
\newcommand{\hotimes}{\mathbin{\hat{\otimes}}}
\NewDocumentCommand{\qiso}{s}{
  \IfBooleanTF #1
  {\xleftarrow{\sim}}
  {\xrightarrow{\sim}}%
}
\tikzset{
  ext/.style={circle, draw, fill=white, inner sep=2pt},
  int/.style={circle,draw,fill,inner sep=2pt},nil/.style={inner sep=2pt},
  unkv/.style={circle, fill = gray, draw, inner sep = 2pt}
}
\newcommand{\beq}[1]{\begin{equation}
\label{#1}
}
\newcommand{\eeq}{\end{equation}}
\newcommand{\ricardo}[1]{\todo[color=green!40]{#1}}
\newcommand{\ricardoline}[1]{\todo[inline,color=green!40]{#1}}
\newcommand{\najib}[1]{\todo[color=blue!40]{#1}}
\newcommand{\Mod}{\mathrm{Mod}}
\newcommand{\conf}{\mathrm{Conf}}
\newcommand{\Ind}{\mathrm{Ind}}
\newcommand{\C}{\mathbb C}
\newcommand{\BV}{\mathsf{BV}}
\newcommand{\BVGraphs}{\mathsf{BVGraphs}}
\newcommand{\BVc}{\mathsf{BV}^c}
\newcommand{\FFM}{\mathsf{FFM}}
\newcommand{\HH}{\mathbf{M}}
\newcommand{\Mo}{\mathsf{Mo}}
\newcommand{\PaRB}{\mathrm{PaRB}}
\newcommand{\Tot}{\mathrm{Tot}}
\newcommand{\Hom}{\mathrm{Hom}}
\newcommand{\G}{\mathtt{G}}
\newcommand{\sS}{\mathbf{S}}
\newcommand{\BVDelta}{\Delta_{BV}}
\newcommand{\GRT}{\mathrm{GRT}}
\tikzset{every loop/.style={}}
\author{
	Ricardo Campos\thanks{IMAG, Univ. Montpellier, CNRS, Montpellier, France. Email: \href{ricardo.campos@umontpellier.fr}{\nolinkurl{ricardo.campos@umontpellier.fr}}}
	\and
	Najib Idrissi\thanks{Université de Paris, IMJ-PRG, CNRS, F-75013 Paris, France. \href{mailto:najib.idrissi-kaitouni@imj-prg.fr}{\nolinkurl{najib.idrissi-kaitouni@imj-prg.fr}}}
	\and
	Thomas Willwacher\thanks{Department of Mathematics, ETH Zurich, R\"amistrasse 101, 8092 Zurich, Switzerland. Email: \href{thomas.willwacher@math.ethz.ch}{\nolinkurl{thomas.willwacher@math.ethz.ch}}}
}
\newcommand{\mytitle}{Configuration Spaces of Surfaces}
\title{\mytitle}
\begin{document}
	\maketitle
	
	\begin{abstract}
      We compute small rational models for configuration spaces of points on oriented surfaces, as right modules over the framed little disks operad.
      We do this by splitting these surfaces in unions of several handles.
      We first describe rational models for the configuration spaces of these handles as algebras in the category of right modules over the framed little disks operad.
      We then express the configuration spaces of the surface as an ``iterated Hochschild complex'' of these algebras with coefficients in the module given by configurations in a sphere with holes.
      
      Physically, our results may be interpreted as saying that the partition function of the Poisson-$\sigma$-model on closed surfaces has no quantum corrections, i.e., no terms coming from Feynman diagrams of positive loop order.
	\end{abstract}
	
	\tableofcontents
	
	\addsec{Introduction}

    In this paper we study the rational homotopy theory of the configuration spaces $\Conf_{r}(X)$ of $r$ points on surfaces $X$. These spaces are classical objects of topology with a long history.
    Their rational homotopy type has been well understood for along time.
    For example, in the ``local'' case $X=\R^2$, one knows due to work of Arnold \cite{Arnold1969} that the cohomology algebra of the configuration space has the presentation
    \[
    H^*(\Conf_{r}(\R^2)) \cong S(\omega_{ij})_{1 \le i \neq j \le r}
    / (\omega_{ji} - \omega_{ij}, \omega_{ij} \omega_{jk} + \omega_{jk} \omega_{ki} + \omega_{ki} \omega_{ij}),
    \]
    where $\omega_{ij}$ are generators of degree $1$, and $S(-)$ denotes the free graded commutative algebra.
    Furthermore, $\Conf_{r}(\R^2)$ is also formal (i.e.\ its cohomology encodes its rational homotopy type), and at least over $\C$, there is a one-line proof: The map from cohomology to differential forms
    \[
    H^*(\Conf_{r}(\R^2)) \to \Omega(\Conf_{r}(\R^2))
    \]
    that sends $\omega_{ij}$ to $d\log(z_i-z_j)$ is a quasi-isomorphism, where $z_i\in \C\cong \R^2$ it the position of the $i$-th point in a configuration.
    For higher genus (closed) surfaces $X=\Sigma_g$, the cohomology and the rational homotopy type of the configuration spaces are also well understood by work of Bezrukavnikov, Fulton--MacPherson, and others \cite{Bezrukavnikov1994,FultonMacPherson1994}. In this case, the configuration spaces are generally not rationally formal, but small models can be found that we shall recall below.
    
    Going beyond the (rational) homotopy types of the individual configuration spaces, there are however strong algebraic structures that tie them together, for various $r$.
    For example, the configuration spaces $\Conf_r(\R^2)$ can be compactified and assembled into a topological operad $\FM_2$, the Fulton--MacPherson--Axelrod--Singer operad, which is weakly equivalent to the little disks operad.
    One can then ask what the rational homotopy type of the operad $\FM_2$ is. In other words, one wants to enhance Arnold's result so as to also capture the operadic structure. This latter question has been solved much more recently, with the result that the formality still holds operadically \cite{Tamarkin2003, Kontsevich1999,LambrechtsVolic2014,FresseBook}. Note, however, that while the formality as spaces has a one-line proof, the operadic upgrade is a highly nontrivial result, and all known proofs use Drinfeld associators or other complicated pieces of data.
    The formality of the operad $\FM_{2}$ (or $\FM_{n}$ in general) has important consequences, as e.g.\ it allows to express the rational homotopy type of the space of higher-dimension long knots $\overline{\operatorname{Emb}}_{c}(\R^{m}, \R^{n})$ purely combinatorially~\cite{FresseTurchinWillwacher2017}.
    
    The main goal of the present paper is to extend these results to surfaces of higher genera.
    To this end, we will deal with the \emph{framed} configuration spaces $\Conf_r^{fr}(X)$ throughout, for our convenience. Concretely, points of $\Conf_r^{fr}(X)$ are ordered subsets of $r$ points in $X$, with a trivialization of the tangent space (a frame) at each of the points. We will assume that these frames are positively oriented and orthonormal with respect to some given orientation and metric on $X$.
    This means that the frames can be specified by providing a unit vector at each of the $r$ points of the configuration.

	Let $\Sigma_g$ be the closed oriented surface of genus $g$.
	One can then define the (compactified) configuration spaces $\FFM_{\Sigma_g}(r)\simeq \Conf^{fr}_r(\Sigma_g)$ of $r$ framed points in $\Sigma_{g}$.
	They form an operadic right module over the Fulton--MacPherson--Axelrod--Singer version of the framed little disks operad $\FFM_2$.
    Our goal, in this paper, is to find an explicit model of $\FFM_{\Sigma_{g}}$ together with its action of $\FFM_{2}$.
	This model could be used e.g.\ to compute spaces of embeddings of surfaces, or factorization homology of $\FFM_{2}$-algebras over surfaces.

	It is well known that the framed little 2-disks operad is also rationally formal \cite{Severa2010,GiansiracusaSalvatore2010}.
    Its cohomology cooperad is the Batalin--Vilkovisky cooperad $\BVc\coloneqq H^*(\FFM_2)$. It is given by the collection of graded commutative algebras
	\begin{equation}\label{equ:BVcpresentation}
      \BVc(r) = S(\omega_{ij})_{1 \le i \neq j \le r}/ (\omega_{ji} - \omega_{ij}, \omega_{ij} \omega_{jk} + \omega_{jk} \omega_{ki} + \omega_{ki} \omega_{ij}) \otimes S(\theta_{i})_{1 \le i \le r},
	\end{equation}
    where the $\omega_{ij}$ and the $\theta_{i}$ are generators of degree $1$.
	
    Rational dg commutative algebra models of the framed configuration spaces $\FFM_{\Sigma_g}(r)$ are well known~\cite{Bezrukavnikov1994}.
    Let $H^{*}(\Sigma_{g}) = S(a^{k}, b^{k})_{1 \le k \le g} / (a^{k} b^{k} - a^{l} b^{l})_{1 \le k \neq l \le g}$ be the cohomology of the surface.
    Let $\nu$ be its volume form ($\nu = a^{k} b^{k}$ for any $k$), and $\Delta = \nu \otimes 1 + 1 \otimes \nu - \sum_{k} (a^{k} \otimes b^{k} + b^{k} \otimes a^{k})$ be its diagonal class.
    A particularly simple model for $\FFM_{\Sigma_{g}}(r)$ is the following:
	\[
	  \Mo_g(r) = \bigl( H^{*}(\Sigma_{g})^{\otimes r} \otimes \BVc(r)) / (a^{k}_{i} \omega_{ij} - a^{k}_{j} \omega_{ij}, b^{k}_{i} \omega_{ij} - b^{k}_{j} \omega_{ij},\nu_{i} \omega_{ij} - \nu_{j} \omega_{ij}), \; d \omega_{ij} = \Delta_{ij}, d \theta_{i} = (2-2g) \nu_{i} \bigr).
	\]
	Furthermore, the collection of dg commutative algebras $\Mo_g$ has a natural right operadic $\BVc$-comodule structure.
	Concretely, the cooperadic coaction is given by the formulas
    \begin{align*}
      \Mo_{g}(r+s-1)
      & \xrightarrow{\circ_{i}^{*}} \Mo_{g}(r) \otimes \BVc(s) \\
      a^{k}_{j}
      & \mapsto a^{k}_{[j]} \otimes 1
      \\
      b^{k}_{j}
      & \mapsto b^{k}_{[j]} \otimes 1
       \\
      \nu_{j}
      & \mapsto \nu_{[j]} \otimes 1
      \\
      \omega_{jk}
      & \mapsto
        \begin{cases}
          \theta_{i} \otimes 1 + 1 \otimes \omega_{(j-s+1)(k-s+1)} & \text{if } i \le j,k \le i+s-1, \\
          \omega_{[j][k]} \otimes 1 & \text{otherwise},
        \end{cases}
      \\
      \theta_{j}
      & \mapsto
        \begin{cases}
          \theta_{i} \otimes 1 + 1 \otimes \theta_{j-s+1} & \text{if } i \le j,k \le i+s-1, \\
          \theta_{[j]} \otimes 1 & \text{otherwise}.
        \end{cases}
    \end{align*}
    where $[j] \in \{1,\dots,s\}$ is either $j$ if $j \le i-1$, $i$ if $i \le j \le i+s-1$, or $j-s+1$ otherwise.
	We refer to~\cite[Prop.~84]{Idrissi2018b} for more details for the case $g = 0$ which generalizes easily to $g > 0$.
	
	Our main result is then that this cooperadic right coaction is indeed the correct one, modeling the right action of $\FFM_2$ on $\FFM_{\Sigma_g}$.
	
	\begin{thmA}\label{thm:main}
		The pair $(\BVc,\Mo_g)$ consisting of the Batalin--Vilkovisky cooperad $\BVc\coloneqq H^*(\FFM_2)$ and its right cooperadic Hopf comodule $\Mo_g$ is a rational model for the pair $(\FFM_2,\FFM_{\Sigma_g})$.
	\end{thmA}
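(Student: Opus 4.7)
The strategy, as signalled in the abstract, is to decompose the surface geometrically and glue the corresponding algebraic models. I would first fix a handle decomposition of $\Sigma_g$, cutting along $g$ pairwise disjoint non-separating simple closed curves to obtain a sphere with $2g$ holes $S_{2g}$ together with $g$ cylindrical handles, each identifying a pair of boundary circles of $S_{2g}$. This decomposition should translate, at the level of framed configuration spaces, into an expression of $\FFM_{\Sigma_g}$ (with its $\FFM_2$-action) as an iterated Hochschild-type complex whose coefficients are configurations on $S_{2g}$ and whose "algebras" are configuration spaces of the cylindrical handles, each viewed as an algebra in the category of right $\FFM_2$-modules.

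Next, I would construct right $\FFM_2$-module models for each piece separately. Since $S_{2g}$ is planar, the techniques of~\cite{Idrissi2018b} (which treat the genus-zero case) should yield, possibly after mild adaptation to accommodate the $2g$ boundary circles, a small rational model for $\FFM_{S_{2g}}$ as a right $\FFM_2$-module with distinguished "trace points" at each boundary component. Each cylinder in turn contributes an algebra in right $\FFM_2$-modules whose underlying cohomology is essentially $H^*(S^1)$; its two generators will play the role of the $a^k$ and $b^k$ cycles on $\Sigma_g$. Gluing a pair of boundary components of $S_{2g}$ through a cylindrical handle then corresponds algebraically to a Hochschild-style construction in the category of right $\FFM_2$-modules, and iterating this gluing over the $g$ handles produces a candidate cooperadic Hopf $\BVc$-comodule model for $\FFM_{\Sigma_g}$.

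Finally, I would identify this iterated Hochschild complex with the explicit comodule $\Mo_g$. The generators $a^k_i, b^k_i$ should emerge as the two natural one-cycles associated to the $k$-th handle, $\nu$ as their product, and the key differentials $d\omega_{ij}=\Delta_{ij}$ and $d\theta_i = (2-2g)\nu_i$ should drop out of the diagonal class and the Euler class of the tangent bundle respectively, the coefficient $(2-2g)$ being precisely the Euler characteristic of $\Sigma_g$; the coaction formulas listed above then match by inspection. The principal obstacle will be the last step: rigorously establishing the quasi-isomorphism between this iterated Hochschild model and the honest framed configuration spaces as right $\FFM_2$-modules. Via propagator-integral techniques of Kontsevich type, this amounts to the vanishing of all positive-loop-order Feynman graph contributions in the sense of the graph complex — which is exactly the physical statement in the abstract that the partition function of the Poisson $\sigma$-model on a closed surface receives no quantum corrections, and this is where the main technical work of the paper must go.
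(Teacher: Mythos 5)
Your plan reproduces the paper's architecture almost exactly: the paper likewise cuts $\Sigma_g$ into a sphere with marked points and $g$ cylindrical handles, models each piece as an algebra/coalgebra object in right $\FFM_2$-modules (resp.\ $\BVc$-comodules), glues them by an iterated Hochschild/bar-type construction, and then compares the resulting comodule $\HH_g$ to $\Mo_g$ via a graphical model and propagator integrals whose only nonvanishing contributions are at one vertex --- i.e.\ the partition function is the trivial one, exactly the ``no quantum corrections'' statement you identify as the crux. Two ingredients that your outline leaves implicit are, however, where much of the actual work lies. First, to model the cylinder algebra $\FFM_C$ and the sphere with holes $\FFM_2^{(1,q)}$ rationally \emph{and compatibly with the orientation-reversing involution} that converts left boundary-module structures into right ones for the self-gluings, the paper needs formality of $\FFM_2$ as a \emph{cyclic} operad over $\Q$ (Theorem \ref{thm:FFMformality}), proved via parenthesized ribbon braids and a rational Drinfeld associator; adapting the planar techniques of \cite{Idrissi2018b} would not by itself give this equivariance. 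Second, honest Kontsevich-type configuration-space integrals would only yield a model over $\R$; to get the theorem over $\Q$ the paper replaces them by purely combinatorial fiber integrals (integrals of polynomial forms over simplices together with a projection $\BVc_C(1)\to\Q$), proves a combinatorial Stokes formula for these, and then evaluates the partition function exactly, obtaining $Z^{triv}$. With these two points supplied, your plan is essentially the paper's proof.
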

	
	We note that real models for the pair $(\FFM_2,\FFM_{\Sigma_g})$ have already been described by two of the authors in \cite{CamposWillwacher2016}.
	However, the models in loc. cit. depend on a Maurer--Cartan element in a suitable graph complex, which can be interpreted as the partition function of a topological quantum field theory on the surface.
	Our result can be seen as an evaluation of this partition function. In addition we establish our results over the ground field $\Q$ instead of $\R$.
	
	\subsection{Idea of proof}
	The proof of Theorem \ref{thm:main} consists of three steps.
	\begin{itemize}
		\item First we establish the formality of $\FFM_2$ as a cyclic operad, see Theorem \ref{thm:FFMformality}. This is done by slightly extending the known formality proofs for $\FFM_2$ to cover the cyclic structure as well.
		\item We then use the cyclic model for $\FFM_2$ to build models for the configuration spaces of points on spheres with some punctures.
		From these, the configuration spaces of surfaces can be obtained by gluing the punctures together in pairs. We hence obtain models for $\FFM_{\Sigma_g}$, that resemble a higher Hochschild complex, see section \ref{sec:confspacemodels}.
		\item Finally, we relate the models constructed in the previous step further to $\Mo_g$ above. This last step is performed in section \ref{sec:theproof}, using configuration space integral techniques formally resembling those of \cite{CamposWillwacher2016}.
	\end{itemize}
	
	\subsection*{Acknowledgements}

	The authors have been partially supported by the NCCR SwissMAP funded by the Swiss National Science Foundation, and the ERC starting grant 678156 GRAPHCPX.
    N.I.\ thanks Adrien Brochier for helpful discussions.
	
	\section{Notation and recollections}
	
	\subsection{Generalities}
	Unless explicitly said otherwise, we always work over the ground field $\Q$, so vector spaces are $\Q$-vector spaces, and the cohomology $H(X):=H^*(X)$ and the homology $H_*(X)$ of topological spaces $X$ is assumed to be taken with $\Q$-coefficients.
	We usually work with cohomological conventions, which means that the differential in differential graded (dg) vector spaces has degree +1.
	For $V$ a differential graded vector space, we denote the cohomology of $V$ by $H(V)=H^*(V)$.
	Furthermore, we let $V[k]$ be the same dg vector space, except that the degrees have been shifted by $k$ units. Concretely, a degree $d$ element $v\in V$ has degree $d-k$ in $V[k]$.
	
	We generally assume that the reader is familiar with operads, cooperads and the operadic language in general.
	A good introductory textbook is \cite{LodayVallette2012}.
	We also need to use operadic right modules constantly. A detailed treatment can be found in \cite{FresseModules}, see in particular section 5.1.1 for the definition of operadic right module.
	
	Let us note that there are two different labelling conventions for operations in (co)operads. For an integer $r\geq 0$ we can denote by $\op P(r)$ the $r$-ary operations in the operad $\op P$. Or, we can use finite sets $S$ and write $\op P(S)$ for the operations ``with inputs labelled by $S$''. We shall use both conventions as it fits. The reader can think of the ``finite set'' convention as the fundamental one, with the shorthand $\op P(r)=\op P(\{1,\dots,r\})$.
	
	We abbreviate ``differential graded commutative algebra'' to dgca. A cooperad in the symmetric monoidal category of dgcas is called a Hopf cooperad.
	Generally, we will sometimes abuse the name ``Hopf'' to indicate that we are working over the category of dg commutative algebras.
	
	
	\subsection{Dg Hopf models for topological operads and modules}\label{sec:model_definition}
	
    Due to Sullivan's work~\cite{Sullivan1977}, the rational homotopy theory of topological spaces is encoded by commutative differential graded algebras (dgcas).
    Given a space $X$, Sullivan builds a dgca $\Omega^{*}(X)$ of ``piecewise polynomial forms''.
    One of the main theorems of rational homotopy theory states that two simply connected finite-type spaces $X$ and $Y$ are rationally equivalent (i.e.\ connected by a zigzag of maps that induce isomorphisms on rational homotopy groups) if and only if the dgcas $\Omega^{*}(X)$ and $\Omega^{*}(Y)$ are quasi-isomorphic (i.e.\ connected by a zigzag of morphisms that induce isomorphisms on cohomology).
    Sullivan's theory also works for nilpotent path-connected spaces and was recently extended to arbitrary path-connected spaces~\cite{FelixHalperinThomas2015}.
    A model of the space $X$ is then defined to be any dgca quasi-isomorphic to $\Omega^{*}(X)$.

    Let $\op T$ be a topological or simplicial operad.
    Then the collection $\Omega^{*}(\op T)$ is in general not a Hopf cooperad (i.e.\ a cooperad in the category of dgcas), because the Künneth quasi-isomorphisms go in the wrong direction.
	Suppose that $\op C$ is a dg Hopf cooperad together with quasi-isomorphisms of dgcas 
	\[
      \op C(r) \to \Omega(\op T(r))
	\]
	that respect the (co)unit and such that the following diagrams commute:
    \begin{equation}\label{eq:hopf-model}
      \begin{tikzcd}
        \op C(r+s)\ar{r} \ar{dd}& \Omega(\op T(r+s))\ar{d} \\
        & \Omega(\op T(r+1)\times \op T(s)) \\
        \op C(r+1)\otimes \op C(s) \ar{r} & \Omega(\op T(r+1))\otimes \Omega(\op T(s))\ar{u}{\sim}
      \end{tikzcd}
    \end{equation}
    for each (co)operadic (co)composition. Then we call $\op C$ a dg Hopf cooperad model for $\op T$. Furthermore, we call any dg Hopf cooperad quasi-isomorphic to $\op C$ a dg Hopf cooperad model for $\op T$. 
	
	Suppose furthermore that $\op M$ is an operadic right $\op T$-module, and $\op N$ is a cooperadic right $\op C$-comodule.
	Suppose that there are quasi-isomorphisms of dgcas
	\[
	\op N(r) \to \Omega(\op M(r))
	\]
	such that the following diagrams commute
	\[
      \begin{tikzcd}
        \op N(r+s)\ar{r} \ar{dd}& \Omega(\op M(r+s))\ar{d} \\
        & \Omega(\op M(r+1)\times \op T(s)) \\
        \op N(r+1)\otimes \op C(s) \ar{r} & \Omega(\op M(r+1))\otimes \Omega(\op T(s))\ar{u}{\sim}
      \end{tikzcd}
    \] 
    Then we call the pair $(\op C, \op N)$ a dg Hopf cooperad model for the pair $(\op T, \op M)$.
	
    A more proper rational homotopy theory for topological operads has been developed by B.~Fresse \cite{FresseBook,Fresse2018}, and will be extended to operadic modules in the upcoming work by Fresse--Willwacher \cite{FresseWillwacher2019}.
    In the light of the more proper (and more complicated theory) our simplistic definition of dg Hopf models is justified by \cite[Proposition 2.9]{Fresse2018}
    and the analogous result for modules, which imply that models in our cheap sense are indeed models in the sense of loc. cit.
	Briefly, the functor $\Omega^{*}$ admits an operadic upgrade $\Omega^{*}_{\natural}$ which maps topological (or simplicial) operads to Hopf cooperads.
    By abstract nonsense, a morphism of Hopf cooperads $\op C \to \Omega^{*}_{\natural}(\op T)$ is exactly the same thing as collections of dgca morphisms $\op C(k) \to \Omega^{*}(\op T(k))$ which make the diagrams above commute.
    This functor defines a Quillen adjunction between topological/simplicial operads and Hopf cooperads. Moreover if $\op T$ is a cofibrant operad with finite-type components, then the canonical map $\Omega^{*}_{\natural}(\op T)(k) \to \Omega^{*}(\op T(k))$ is a quasi-isomorphism.
    Similarly, the functor $\Omega^{*}$ admits an upgrade (also denoted $\Omega^{*}_{\natural}$) which maps topological/simplicial right modules to dg Hopf comodules, and the rational homotopy theory can be extended analogously \cite{FresseWillwacher2019}.

	In any case, we call the operad or module rationally formal if the cohomology cooperad or comodule is a rational dg Hopf model.
	Furthermore, we will analogously define the notion of dg Hopf model for algebra objects in right $\op T$-modules, and for modules thereof, see the discussion in section \ref{sec:Bmodules} below.


	\subsection{Fulton--MacPherson operad}

    In order to recover operadic structures, we consider the Fulton--MacPherson--Axelrod--Singer compactifications of configuration spaces~\cite{AxelrodSinger1994,FultonMacPherson1994}, rather than configuration spaces themselves.
    We refer to~\cite{Sinha2004} and~\cite[Sections~5.1--5.2]{LambrechtsVolic2014} for detailed treatments.

    Recall that if $X$ is a space, then its $r$th (ordered) configuration space is $\Conf_{r}(X) \coloneqq \{ x \in X^{r} \mid \forall i \neq j, \; x_{i} \neq x_{j} \}$.
    Even if $X$ is compact, the spaces $\Conf_{r}(X)$ are generally not compact for $k \ge 2$.
    In this paper, we will mainly consider the compactification for $X = \R^{2}$, although we will also mention $X = \Sigma_{g}$ in Proposition~\ref{prop:thinmodel} and Section~\ref{sec:recoll-citec-anoth}.

    Let us first consider $X = \R^{2}$.
    The space $\Conf_{r}(\R^{2})$ embeds into $(S^{1})^{r(r-1)} \times [0, +\infty]^{r(r-1)(r-2)}$ as follows :
    \begin{itemize}
    \item For $i \neq j$, the component $\theta_{ij} : \Conf_{r}(\R^{2}) \to S^{1}$ is given by $\theta_{ij}(x) = (x_{i} - x_{j}) / \| x_{i} - x_{j} \|$.
    \item For $i \neq j \neq k \neq i$, the component $\delta_{ijk} : \Conf_{r}(\R^{2}) \to [0,+\infty]$ is given by $\delta_{ijk}(x) = \|x_{i} - x_{k}\| / \|x_{j} - x_{k}\|$.
    \end{itemize}
    The Fulton--MacPherson compactification $\FM_{2}(r)$ is the closure of the image of this embedding.
    It is a compact manifold with corners of dimension $2k - 3$.
    The interior of $\FM_{2}(r)$ is the quotient of $\Conf_{r}(\R^{2})$ by the action of the group of translations and positive rescalings $\R^{2} \rtimes \R_{>0}$.
    Intuitively, a point of the boundary of $\FM_{2}(r)$ can be seen as a virtual configuration in $\R^{2}$ where some of the points are infinitesimally close.
\[
\begin{tikzpicture}[baseline=1cm,scale=.7,int1/.style={draw, circle,fill,inner sep=1pt},label distance=-1mm]
\draw (0,0)--(3,0)--(4,2)--(1,2)--cycle;
\node [int1, label=180:{$\scriptstyle 1$}] (v1) at (1,.5) {};
\node [int1, label={$\scriptstyle 2$}] (v2) at (1,1) {};
\node [int1] (va) at (2,.5) {};
\begin{scope}[scale = .7, xshift=4cm,yshift=1.5cm]
\draw (va) -- (0,0) (va) -- (3,0) (va)-- (4,2) (va) -- (1,2);
\draw[fill=white] (0,0)--(3,0)--(4,2)--(1,2)--cycle;
\node [int1, label={$\scriptstyle 3$}] (v3) at (1,1) {};
\node [int1] (vb) at (2,.5) {};
\begin{scope}[scale = .7, xshift=4cm,yshift=1cm]
\draw (vb) -- (0,0) (vb) -- (3,0) (vb)-- (4,2) (vb) -- (1,2);
\draw[fill=white] (0,0)--(3,0)--(4,2)--(1,2)--cycle;
\node [int1, label={$\scriptstyle 4$}] (v4) at (1,1) {};
\node [int1, label={$\scriptstyle 5$}] (v5) at (2,.5) {};
\end{scope}
\end{scope}
\end{tikzpicture}
\]

    There is an operad structure on the collection $\FM_{2} = \{ \FM_{2}(r) \}_{r \ge 0}$, which can be defined by explicit formulas~\cite[Section 5.2]{LambrechtsVolic2014}.
    This operad is weakly homotopy equivalent to the classical little disks operad.
    In particular, its homology operad is the operad encoding Gerstenhaber algebras. We also note that analogous definitions can be made starting from configurations in $\R^n$ instead of $\R^2$, thus giving rise to operads $\FM_n$, weakly equivalent to the little $n$-disks operads.

    The special orthogonal group $\mathrm{SO}(2)$ acts on $(S^{1})^{r(r-1)} \times [0, +\infty]^{r(r-1)(r-2)}$ by rotating the circles, and this action restricts to $\FM_{2}(r)$.
    This restricted action is compatible (in a sensible way~\cite{SalvatoreWahl2003}) with the operad structure of $\FM_{2}$, which allows us to define the framed Fulton--MacPherson operad as the semidirect product $\FFM_{2} = \FM_{2} \rtimes \mathrm{SO}(2)$, given in arity $r$ by $\FFM_{2}(r) = \FM_{2}(r) \times \mathrm{SO}(2)^{r}$ and an operad structure inherited from the operad structure of $\FM_{2}$, the group structure of $\mathrm{SO}(2)$, and the action of $\mathrm{SO}(2)$ on $\FM_{2}(r)$.
    This operad is weakly equivalent to the classical framed little disks operad.
    In particular, its homology is $\BV$, the operad encoding Batalin--Vilkovisky-algebras.

    Let us consider the genus $g$ surface $X = \Sigma_{g}$.
    We can embed $\Sigma_{g}$ as an algebraic submanifold of some $\R^{N}$.
    We can then embed $\Conf_{r}(\Sigma_{g})$ into $(\Sigma_{g})^{r} \times (S^{N-1})^{r(r-1)} \times [0,+\infty]^{r(r-1)(r-2)}$ as follows:
    \begin{itemize}
    \item the factor $\Conf_{r}(\Sigma_{g}) \to (\Sigma_{g})^{r}$ is the inclusion;
    \item the factors $\theta_{ij} : \Conf_{r}(\Sigma_{g}) \to S^{N-1}$ and $\delta_{ijk} : \Conf_{r}(\Sigma_{g}) \to [0,+\infty]$ are defined similarly to the maps $\theta_{ij}$ and $\delta_{ijk}$ above, using the embedding $\Sigma_{g} \subset \R^{N}$.
    \end{itemize}
    The Fulton--MacPherson compactification $\FM_{\Sigma_{g}}(r)$ is then closure of the image of this embedding.
    It is a manifold with corners of dimension $2r$ and a compact semi-algebraic set.
    Its interior is $\Conf_{r}(\Sigma_{g})$, and its boundary intuitively consists of configuration of points on $\Sigma_{g}$ which can become infinitesimally close.
    The following picture illustrates a boundary point in $\FM_{\Sigma_{2}}(5)$.
\[
\begin{tikzpicture}[baseline=1cm, scale=1,int1/.style={draw, circle,fill,inner sep=1pt}]
\draw (0,0)
.. controls (-.7,0) and (-1.3,-.7) .. (-2,-.7)
.. controls (-4,-.7) and (-4,1.7) .. (-2,1.7)
.. controls (-1.3,1.7) and (-.7,1) .. (0,1)
.. controls  (.7,1) and (1.3,1.7) .. (2,1.7)
.. controls (4,1.7) and (4,-.7) .. (2,-.7)
.. controls (1.3,-.7) and (.7,0)  .. (0,0);
\begin{scope}[xshift=-2cm, yshift=.6cm, scale=1.2]
\draw (-.5,0) .. controls (-.2,-.2) and (.2,-.2) .. (.5,0);
\begin{scope}[yshift=-.07cm]
\draw (-.35,0) .. controls (-.1,.1) and (.1,.1) .. (.35,0);
\end{scope}
\end{scope}
\begin{scope}[xscale=-1, xshift=-2cm, yshift=.6cm, scale=1.2]
\draw (-.5,0) .. controls (-.2,-.2) and (.2,-.2) .. (.5,0);
\begin{scope}[yshift=-.07cm]
\draw (-.35,0) .. controls (-.1,.1) and (.1,.1) .. (.35,0);
\end{scope}
\end{scope}
\node [int1, label={$\scriptstyle 1$}] (v1) at (-3,.5) {};
\node [int1, label={$\scriptstyle 2$}] (v2) at (2,-.1) {};
\node [int1] (va) at (0,.5) {};
\node [int1] (va) at (0,.5) {};
	\begin{scope}[scale = .7, xshift=-1cm,yshift=2.75cm]
	\draw (va) -- (0,0) (va) -- (3,0) (va)-- (4,2) (va) -- (1,2);
	\draw[fill=white] (0,0)--(3,0)--(4,2)--(1,2)--cycle;
	\node [int1, label={$\scriptstyle 3$}] (v3) at (1,1) {};
	\node [int1] (vb) at (2,.5) {};
	\begin{scope}[scale = .7, xshift=2cm,yshift=2cm]
	\draw (vb) -- (0,0) (vb) -- (3,0) (vb)-- (4,2) (vb) -- (1,2);
	\draw[fill=white] (0,0)--(3,0)--(4,2)--(1,2)--cycle;
	\node [int1, label={$\scriptstyle 4$}] (v4) at (1,1) {};
	\node [int1, label={$\scriptstyle 5$}] (v5) at (2,.5) {};
	\end{scope}
	\end{scope}
\end{tikzpicture}
\]

    The surface $\Sigma_{g}$ is not parallelizable for $g \neq 1$, and therefore the collection $\FM_{\Sigma_{g}}$ does not have the structure of a right $\FM_{2}$-module.
    However if we fix a metric and orientation on $\Sigma_g$, then we can consider the oriented orthonormal frame bundle $F_{\Sigma_{g}} \to \Sigma_{g}$, which is a nontrivial $\mathrm{SO}(2)$-bundle for $g \neq 1$.
    We define the framed Fulton--MacPherson compactification of $\FM_{\Sigma_{g}}$ as the pullback:
    \[
      \begin{tikzcd}
        \FFM_{\Sigma_{g}}(r) \ar[r, dashed] \ar[d, dashed] \ar[dr, phantom, "\lrcorner"] & (F_{\Sigma_{g}})^{r} \ar[d] \\
        \FM_{\Sigma_{g}}(r) \ar[r, hook] & (\Sigma_{g})^{r}
      \end{tikzcd}
      .
    \]

    Intuitively, a point in $\FFM_{\Sigma_{g}}(r)$ consists of a (possibly virtual) configuration in $\Sigma_{g}$, together with a trivialization of the tangent bundle of $\Sigma_{g}$ at each point of the configuration.
    Then the collection $\FFM_{\Sigma_{g}}$ has the structure of a right module over the operad $\FFM_{2}$.
    The structure maps can be roughly interpreted as follows.
    Given $x \in \FFM_{\Sigma_{g}}(r)$ and $y \in \FFM_{2}(s)$, the element $x \circ_{i} y \in \FFM_{\Sigma}(r+s-1)$ is obtained using the given trivialization $T_{x_{i}}\Sigma_{g} \cong \R^{2}$ to view the configuration $y$ as a configuration in the tangent space $T_{x_{i}}\Sigma_{g}$, which is then inserted into $x$.
    The trivializations at the $y_{j}$ are obtained from the trivialization at $x_{i}$, rotated by the element of $\mathrm{SO}(2)$ attached to $y_{j}$ in $\FFM_{2}(s)$.

	Note also that on $\FFM_{\Sigma_g}$ one has natural forgetful maps
	\begin{align}\label{equ:piFFMSigma}
	\pi_{i} : \FFM_{\Sigma_g}(r) &\to \FFM_{\Sigma_g}(1)
	&
	\pi_{ij} : \FFM_{\Sigma_g}(r) &\to \FFM_{\Sigma_g}(2)
	\end{align}
	by forgetting all but the points $i$ (respectively $i$ and $j$) from the configuration of points.
	
	\section{Modules}
	\label{sec:Bmodules}
	Let $\op T$ be an operad in some symmetric monoidal category, which for us will usually be topological spaces or dg vector spaces.
	Let $\Mod_{\op T}$ be the category of operadic right $\op T$-modules. Then $\Mod_{\op T}$ is itself equipped with a symmetric monoidal product, given by the formula 
	\[
	(\op M \otimes \op N)(r) = \coprod_{p+q=r} \Ind_{S_p\times S_q}^{S_r} \op M(p) \otimes \op N(q).
	\]
	We can hence talk about algebra objects in $\Mod_{\op T}$, and modules over these algebra objects.
	More concretely, let $A\in \Mod_{\op T}$ be such an algebra object, then we may consider left $A$-modules, right $A$-modules, $A$-bimodules, or more generally $A^{(p,q)}\coloneqq A^{\otimes p}\otimes (A^{op})^{\otimes q}$-modules in the category $\Mod_{\op T}$.
	
	\begin{example}\label{ex:conf_space_modules}
		Let the operad $\op T$ be the (oriented) framed little $n$-disks operad.
		Let $M$ be an $n$-dimensional oriented manifold with boundary $N=\partial M$.
		We fix some collar of $N$ in $M$.
		For $s>0$ we denote by $M^s$ the manifold obtained by adding a cylinder $N\times (0,s)$ to our manifold at the boundary.
		We denote by $\conf(M)$ the space of embeddings of $n$-disks in $M$, considered as a right $\op T$-module.
		Next define an
		algebra object $A\in \Mod_{\op T}$ whose elements are pairs $(s,C)$, where $s\geq 0$ is a number and $C\in \conf((0,s) \times N)$ is a configuration in the cylindrical extension of $N$ of length $s$. The algebra structure is obvious, one just glues two configurations in cylinders of length $s$ and $t$ to a configuration in a cylinder of length $s+t$.
		Finally we consider the right $A$-module $\op M$ whose elements are pairs $(s,C)$ with $s>0$ a number and $C\in \conf(M^s)$ a configuration of disks in the version $M^s$ of $M$ with collar of length $s$ as above.
        The $A$-module structure is given (obviously) by gluing the cylinder to the collar, see Figure~\ref{fig:a-module-struct}.
    \end{example}

    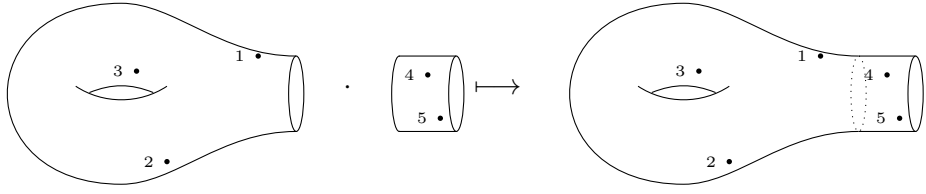
\begin{figure}[htbp]
      \centering
      \tikzset{vtx/.style={circle,fill,inner sep=0,minimum size=2pt}}
      \begin{tikzpicture}[baseline=.5cm]
        \draw (.3,0)
        .. controls (-.7,0) and (-1.3,-.7) .. (-2,-.7)
        .. controls (-4,-.7) and (-4,1.7) .. (-2,1.7)
        .. controls (-1.3,1.7) and (-.7,1) .. (.3,1);
        \begin{scope}[xshift=-2cm, yshift=.6cm, scale=1.2]
          \draw (-.5,0) .. controls (-.2,-.2) and (.2,-.2) .. (.5,0);
          \begin{scope}[yshift=-.07cm]
            \draw (-.35,0) .. controls (-.1,.1) and (.1,.1) .. (.35,0);
          \end{scope}
        \end{scope}
        \draw (.3,.5) ellipse (.1cm and .5cm);
        \node[vtx, label=left:{\tiny 1}] at (-.2,1) {};
        \node[vtx, label=left:{\tiny 2}] at (-1.4,-.4) {};
        \node[vtx, label=left:{\tiny 3}] at (-1.8,.8) {};
      \end{tikzpicture}
      \quad $\cdot$
      \begin{tikzpicture}[baseline=.5cm]
        \begin{scope}
          \clip (0,-.1) rectangle (-.5,1.1);
          \draw (0,.5) ellipse (.1cm and .5cm);
        \end{scope}
        \draw (.75,.5) ellipse (.1cm and .5cm);
        \draw (0,0)--(.75,0) (0,1)--(.75,1);
        \node[vtx, label=left:{\tiny 4}] at (.375,.75) {};
        \node[vtx, label=left:{\tiny 5}] at (.54,.175) {};
      \end{tikzpicture}
      $\longmapsto$
      \begin{tikzpicture}[baseline=.5cm]
        \draw (.3,0)
        .. controls (-.7,0) and (-1.3,-.7) .. (-2,-.7)
        .. controls (-4,-.7) and (-4,1.7) .. (-2,1.7)
        .. controls (-1.3,1.7) and (-.7,1) .. (.3,1);
        \begin{scope}[xshift=-2cm, yshift=.6cm, scale=1.2]
          \draw (-.5,0) .. controls (-.2,-.2) and (.2,-.2) .. (.5,0);
          \begin{scope}[yshift=-.07cm]
            \draw (-.35,0) .. controls (-.1,.1) and (.1,.1) .. (.35,0);
          \end{scope}
        \end{scope}
        \node[vtx, label=left:{\tiny 1}] at (-.2,1) {};
        \node[vtx, label=left:{\tiny 2}] at (-1.4,-.4) {};
        \node[vtx, label=left:{\tiny 3}] at (-1.8,.8) {};
        \begin{scope}[xshift=.3 cm]
          \draw[dotted] (0,.5) ellipse (.1cm and .5cm);
          \draw (.75,.5) ellipse (.1cm and .5cm);
          \draw (0,0)--(.75,0) (0,1)--(.75,1);
          \node[vtx, label=left:{\tiny 4}] at (.375,.75) {};
          \node[vtx, label=left:{\tiny 5}] at (.54,.175) {};
        \end{scope}
      \end{tikzpicture}
      \caption{The $A$-module structure}
      \label{fig:a-module-struct}
    \end{figure}
	
	We note that notationally there is a potential source of confusion: a module $M$ over an algebra object $A\in \Mod_{\op T}$ has two different kinds of module structure, namely the right $\op T$-module structure, and the $A$-module structure. We will refer to the latter structure as ``boundary module'', since all our examples will be modeled on the previous one, where that module structure is given by gluing at the boundary.
	
	\begin{example}\label{ex:mod_from_op}
		Given the operad $\op T$ we may build an algebra object $\op T_1$ in $\Mod_{\op T}$ such that
		\[
		\op T_1(r) = \op T(r+1). 
		\]
		The right $\op T$-action is then by operadic composition on the first $r$ ``inputs''. The algebra product is the operadic composition on the last input.
		
		We may similarly define a $\op T_1^{(1,p)}$-module (i.e., carrying a left action and $p$ right actions of $\op T_1$) $\op T_p$ such that 
		\[
		\op T_p(r) = \op T(r+p),
		\]
		with the right $\op T$-action operating on the first $r$ inputs, and the $p$ right $\op T_1$-actions operating on the others. 
	\end{example}
	
	\begin{remark}\label{rem:involution}
		Suppose that $N$ is an oriented $n-1$ dimensional manifold. We can then define an algebra object $A$ in right modules over the  framed little disks operad as in Example \ref{ex:conf_space_modules} above.
		Next suppose that in addition $N$ has an orientation reversing automorphism $I_N: N\to N^{op}$.
		Then we may build an isomorphism $A\xrightarrow{\cong} A^{op}$ which sends a point $(s,C)\in A$ to $(s,  I_{(0,s)} \times I_N  (C))$, where $I_{(0,s)}:(0,s)\to (0,s)$ just flips the interval around its midpoint.
		
		This identification of $A$ and $A^{op}$ in particular allows us to construct right modules over $A$ from left modules and vice versa.
		We will often use this construction below. In our case $N=S^1$ and $I_N$ is a reflection around one axis.
	\end{remark}
	
	The above definitions dualize without difficulty to the cooperadic context. In particular, if $\op C$ is a dg (Hopf) cooperad, we can consider the category of right $\op C$-comodules $\Mod^c_{\op C}$.
	It is again symmetric monoidal, and we will talk about coalgebra objects and their comodules in $\Mod^c_{\op C}$.

	\begin{remark}
		The data of an algebra $A$ in right $\op T$-modules is equivalent to the data of a $\op T$-module operad, or $\op T$-moperad~\cite[Def.~9]{Willwacher2016}.
		These moperads can be used to define the notion of ``$A$-shaped $\op T$-modules''~\cite[Def.~3.1]{Horel2014}, a notion related to (but different from) the notion of left $A$-modules we consider above.
	\end{remark}
	
	\subsection{Derived tensor products of boundary modules and gluing of manifolds}
	Suppose that $\op T$ is a topological operad and $A\in \Mod_{\op T}$ is an algebra object. Let $M\in \Mod_{\op T}$ be a right $A$-module and $N\in \Mod_{\op T}$ be a left $A$-module. 
    Then we consider the bar construction
	\[
	B(M,A,N) = M \otimes A^{\otimes \bullet} \otimes N,
	\]
	which is a simplicial object in $\Mod_{\op T}$. The simplicial boundary and degeneracy maps are given by the product and module structures and the unit of $A$ as usual.
	Then we define the derived relative tensor product $M\hotimes_A N$ as the fat realization of $B(M,A,N)$, i.e., as the coend
	\[
	 M\hotimes_A N
	 =
	 |B(M,A,N)|_+
	 =
	 \int^{[n]\in \Delta^+}
	 M \otimes A^{\otimes n} \otimes N \otimes \Delta^n.
	\]
	We note that this is defined even for non-unital $A$.
	
	Next, let $M$ be an $A$-bimodule. Then we define a simplicial object in $\Mod_{\op T}$
	\[
	B(M,A) = M\otimes A^{\otimes \bullet},
	\] 
	where the simplicial maps are defined as for $B(M,N)$ before, except that now the last boundary map multiplies the last factor $A$ into $M$ using the left module structure. In other words, this is the simplicial version of the Hochschild complex of $A$ with coefficients in the bimodule $M$. We then denote the fat realization by:
	\[
	\hotimes_A M = |B(M,A)|_+.
	\]
	
	Finally, suppose that $M$ is an $A^{(p,q)}$-module, i.e., it carries $p$ left actions of $A$ and $q$ right actions, that all commute. 
	Then we can do the above construction with respect to the $i$-th left and the $j$-th right module structure (with $1\leq i\leq p$, $1\leq j\leq q$), resulting in an $A^{(p-1,q-1)}$-module which we denote by
	\[
	\hotimes_A^{(i,j)} M.
	\]
	We can furthermore iterate the construction and pair off multiple, say $k$, of the left- and right- module structures to form an $A^{(p-k,q-k)}$-module
	\[
	\hotimes_A^{(i_1,j_1)(i_2,j_2)\dots (i_k,j_k)} M.
	\]
	Here $1\leq i_1,\dots,i_k \leq p$ are pairwise distinct, as well as $1\leq j_1,\dots,j_k \leq q$, and the $i_1$-st left module structure is paired with the $j_1$-st right module structure etc. This should be seen as a version of a higher Pirashvili--Hochschild complex.
	
	We are interested in the derived tensor product mostly in the context of Example \ref{ex:conf_space_modules}.
	Here one has the following result.
	\begin{lemma}
		Suppose that $M_1$ and $M_2$ are oriented manifolds with boundary $\partial M_1=N$, $\partial M_2=N^{op}$.
		Consider the algebra object $A$ of Example \ref{ex:conf_space_modules}, and the right and left $A$-modules $\op M_1$, $\op M_2$ as in that example.
		Then the right module over the little disks operad $\op M_1\hotimes_A \op M_2$ is homotopy equivalent to $\conf(M_1 \sqcup_N M_2)$.
		
		The analogous statements also hold for self-gluings and multiple gluings, in case $M$ has multiple distinct boundary components. 
	\end{lemma}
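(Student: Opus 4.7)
The plan is to construct a natural augmentation $\Phi \colon \op M_1 \hotimes_A \op M_2 \to \conf(M_1 \sqcup_N M_2)$ and show it is a weak equivalence. An $n$-simplex of the bar construction $B(\op M_1, A, \op M_2)$ consists of a configuration $C_0 \in \conf(M_1^{s_0})$, configurations $C_k \in \conf((0, s_k) \times N)$ for $1 \le k \le n$, a configuration $C_{n+1} \in \conf(M_2^{s_{n+1}})$, and a point $t \in \Delta^n$. Gluing these pieces along their common $N$-boundaries produces a configuration in the manifold $M_1 \cup_N ((0, S) \times N) \cup_N M_2$ with $S = s_0 + \dots + s_{n+1}$, which we identify with $M_1 \sqcup_N M_2$ via a fixed diffeomorphism that rescales the intermediate cylinder onto a chosen collar of $N$. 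The face maps of the bar construction are precisely the cylinder gluings that $\Phi$ already implements, so $\Phi$ descends to a continuous map on the fat realization.

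To prove $\Phi$ is a weak equivalence, I would argue via a fiberwise analysis. At each arity $r$ and each simplicial degree $n$, the level map $B(\op M_1, A, \op M_2)_n(r) \to \conf_r(M_1 \sqcup_N M_2)$ has fiber over a configuration $\xi$ having $k$ points in the image of the collar of $N$ given by the choices of cylinder lengths $(s_0, \dots, s_{n+1}) \in \R_{>0}^{n+2}$ whose induced slicing avoids $\xi$'s points. This fiber is a finite disjoint union of contractible convex open subsets, one per non-decreasing map $[k] \to [n]$, i.e.\ one per ordered partition of the $k$ collar points among the $n$ cylindrical pieces. As $n$ varies, these fiberwise data assemble into a simplicial space whose fat realization is precisely the standard simplex $\Delta^k$, hence contractible. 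A Segal/Quillen Theorem~A style argument for fat realizations then yields that $\Phi$ is a weak equivalence. Alternatively, one may invoke the general fact that the two-sided bar construction is a cofibrant replacement computing the derived relative tensor product, which in this geometric setting literally realizes the boundary gluing: the cofibrancy conditions hold because collar attachment is a cofibration.

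The main technical obstacle is the careful assembly of the fiberwise analysis into a proof of the weak equivalence on the total realization; in particular one must verify that the simplicial-level fibers, though disconnected, reassemble correctly under the face relations and the $\Delta^n$-coordinates. The same argument, mutatis mutandis, handles self-gluings of a single manifold $M$ by replacing the two-sided bar construction with the cyclic bar construction $B(\op M, A) = |\op M \otimes A^{\otimes \bullet}|_+$, and iterated gluings along several distinct boundary components by performing the construction once per component, using that bar constructions with respect to disjoint boundary components commute up to natural isomorphism.
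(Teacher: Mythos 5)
Your proposal is correct and follows essentially the same route as the paper's (much terser) proof: identify a point of the realized bar construction with a configuration on the glued manifold together with the auxiliary data of how the connecting cylinder is extended and chopped into pieces, and observe that this auxiliary data lives in a contractible space. Your fiberwise analysis just makes that contractibility claim explicit.
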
	
	\begin{proof}
		One convinces oneself that points in $\op M_1\hotimes_A \op M_2$ are the same as configurations of disks on $M_1 \sqcup_N ((0,1)\times N)\sqcup_N M_2$ with additional data as to how the connecting cyclinder is extended and chopped up in pieces. This additional data is in a contractible space and hence one arrives at the result.
	\end{proof}
	
	The constructions of this section also hold if we replace topological spaces by simplicial sets.

	\subsection{Construction: Configuration spaces of surfaces from the little disks operad}\label{sec:conf_from_FFM}
	In this subsection we will apply the above constructions to build right modules over the framed little 2-disks operad that are weakly equivalent to configuration spaces of oriented surfaces. We will work with the compactified version $\FFM_2$ of the framed little disks operad.
	Our construction proceeds in 4 steps.
	
	First we build an algebra object $\FFM_C\in \Mod_{\FFM_2}$ that is homotopy equivalent to configurations of disks on the cylinder $(0,1)\times S^1$. Concretely, we set
	\[
	\FFM_C(r) = \fiber(\FFM_2(r+1)\to \FFM_2(1)=S^1)
	\]
	to be the fiber of the forgetful map, forgetting all but one point. This is the same as a sub-algebra of the algebra object $(\FFM_2)_1$ from example \ref{ex:mod_from_op}, obtained by fixing the framing at the distinguished input.
	
	Second, we build $\FFM_C^{(1,q)}$-modules $\FFM_{2}^{(1,q)}$, that are homotopy equivalent to configurations of disks on a sphere with $q+1$ boundary components:
	\begin{equation}
	\label{equ:FFM21qdef}
	\FFM_{2}^{(1,q)}(r) = \fiber(\FFM_2(r+q)\to \FFM_2(q)).
	\end{equation}
	Again this is a submodule of the module $(\FFM_2)_q$ constructed in example \ref{ex:mod_from_op}.
	The fiber here can be taken over an arbitrary basepoint.
	We shall take the liberty to set the basepoint later, depending on the desired application.
	
	Third, we build $\FFM_C^{(1+p,q)}$-modules $\FFM_{2}^{(1+p,q)}$ by starting with $\FFM_{2}^{(1,p+q)}$ from the second step and re-interpreting the right $\FFM_C$-action on $p$ of the distinguished inputs as a left action via the isomorphism
	\begin{equation}\label{equ:IFMC}
	I : \FFM_C \cong \FFM_C^{op}
	\end{equation}
	from Remark \ref{rem:involution}.
	Concretely, the above identification of $\FFM_C$ with its opposite is induced from the orientation preserving involution $z\mapsto \frac 1 z$ of $\C$, if we fix the marked point to be at the origin.
	
	Finally, in the fourth step we use the derived tensor products to build objects of $\Mod_{\FFM_2}$
	\begin{equation}\label{equ:ourFMg}
	\hotimes_{\FFM_C}^{(1,1)(2,2)\dots (g,g)}\FFM_{2}^{(1+(g-1),g)}
	\end{equation}
	that are homotopy equivalent to the configuration spaces of framed points $\FFM_{\Sigma_g}$ on an oriented surface of genus $g$, see Figure~\ref{fig.conf-surface}.

    \begin{figure}[htbp]
      \centering
      \begin{tikzpicture}
        \draw (0,0) circle[radius = 2];
        \draw
        (0,1.5)
        arc[x radius=.15, y radius=.2, start angle=270, end angle=90] node (x) {}
        arc[y radius=.7, x radius = 1, start angle=90, end angle=-90]
        arc[x radius=.15, y radius=.2, start angle=270, end angle=90] node(y) {}
        arc[y radius=.3, x radius=.6, start angle=-90, end angle=90];
        \draw[dashed] (x) arc[x radius=.15, y radius=.2, start angle=90, end angle=-90];
        \draw[dashed] (y) arc[x radius=.15, y radius=.2, start angle=90, end angle=-90];
        \node at (0,0) {$\vdots$};
        \draw[fill=white]
        (0,-.9)
        arc[x radius=.15, y radius=.2, start angle=270, end angle=90] node (x) {}
        arc[radius=.7, x radius= 1, start angle=90, end angle=-90]
        arc[x radius=.15, y radius=.2, start angle=270, end angle=90] node(y) {}
        arc[radius=.3, x radius = .6, start angle=-90, end angle=90];
        \draw[dashed] (x) arc[x radius=.15, y radius=.2, start angle=90, end angle=-90];
        \draw[dashed] (y) arc[x radius=.15, y radius=.2, start angle=90, end angle=-90];
        \draw[->] (-3,1) node[above left] {$\FFM_{2}^{(g,g)}$} -- (-1,0);
        \draw[->] (3,1.5) node[above right] {action of $\FFM_{C}$ on $(1,1)$} -- (.8,1.2);
        \draw[->] (3,-.5) node[above right] {action of $\FFM_{C}$ on $(g,g)$} -- (.8,-1);
      \end{tikzpicture}
      \caption{Our version of (the configuration space on) a genus $g$ surface is starting with (configurations on) a sphere with marked points, and then gluing ``thin'' handles at those points. The latter is algebraically represented by taking the higher-Hochschild-complex type construction \eqref{equ:ourFMg}.  }
      \label{fig.conf-surface}
    \end{figure}
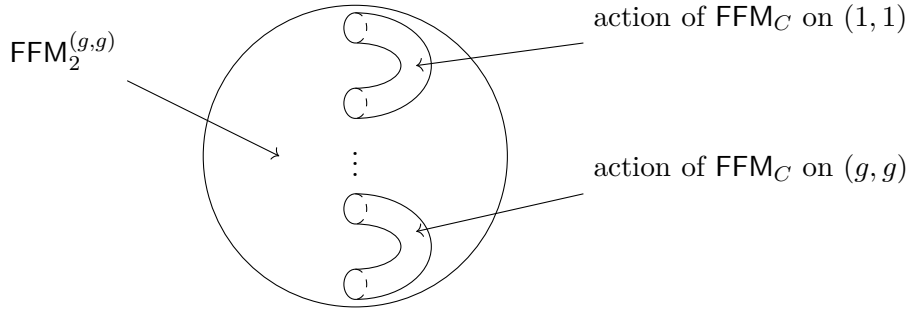
	
	\subsection{Dual construction for Hopf boundary comodules}
    \label{sec:dual-constr-hopf}

	Let $\op C$ be a dg Hopf cooperad and $B$ a coalgebra in right $\op C$-comodules.
	Let $M$ be a $B^{(p,q)}$-comodule. Then, dualizing the constructions above, we can define a $B^{(p-1,q-1)}$-comodule:
	\[
	\hotimes^{(i,j)}_B M \coloneqq \Tot \bigl( M\otimes B^{\otimes \bullet}\otimes \Omega(\Delta^\bullet) \bigr)
	\]
	as the fat totalization of the cosimplicial object on the right.
    Concretely, the collection $\{ M \otimes B^{\otimes k} \otimes \Omega(\Delta^{l})\}_{k,l \ge 0}$ forms a semi-simplicial object in semi-cosimplicial dg-modules.
    If $\Delta_{+} \subset \Delta$ is the semi-simplicial category (of injective nondecreasing maps), then $\hotimes^{(i,j)}_B M$ is defined as the end $\int_{[n] \in \Delta_{+}} M \otimes B^{\otimes n} \otimes \Omega(\Delta^{n})$.
    While the standard totalization of cosimplicial objects is not homotopy invariant, this fat totalization is homotopy invariant: $\Omega(\Delta^{\bullet})$ is a fibrant simplicial object, thus $\int_{[n] \in \Delta_{+}} (-)^{n} \otimes \Omega(\Delta^{\bullet})$ preserves weak equivalences of cofibrant semi-cosimplicial objects, but every semi-cosimplicial dg-module is cofibrant.
	
	\section{Ribbon braids and rational model of configuration space of surfaces}
	
	The goal of this section is to construct a rational dg Hopf comodule model for the configuration spaces of framed points on surfaces, essentially by dualizing the construction of section \ref{sec:conf_from_FFM}, see Proposition \ref{prop:thinmodel} below.
	This model will then later be simplified in section \ref{sec:theproof} to show our main result Theorem \ref{thm:main}.
	
	\subsection{Rational cyclic formality of the framed little disks operad}
	
	The formality of the framed little 2-disks operad is a well known fact.
    It has been shown in the dg Hopf setting and over $\R$ by Giansiracusa--Salvatore \cite{GiansiracusaSalvatore2010}, and for the chains operads over $\Q$ by \v Severa \cite{Severa2010}. 
	There are also statements about the formality as a cyclic operad (over $\R$) in \cite{GiansiracusaSalvatore2012}, however with a mistake, and a reference to an unpublished proof by \v Severa, which we likely reproduce here.
	In any case, we need the following formality theorem, over $\Q$ and in the cyclic dg Hopf setting, slightly extending previous results in the literature.
	
	\begin{theorem}[Formality of the framed little disks operad, extending \cite{GiansiracusaSalvatore2010, Severa2010}]
		\label{thm:FFMformality}
		The framed little 2-disks operad is rationally formal as a cyclic operad, i.e., the cyclic cohomology cooperad $H^*(\FFM_2)$ is a rational dg Hopf model for the cyclic operad $\FFM_2$. 
	\end{theorem}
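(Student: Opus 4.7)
The plan is to upgrade an existing proof of operadic rational formality of $\FFM_{2}$ to a cyclic one. Since the theorem is stated as a slight extension of previous work, the strategy is to take an existing zigzag of Hopf cooperads
\[
  \Omega^{*}_{\natural}(\FFM_{2}) \qiso* \op{E} \qiso \BVc
\]
and to verify that all intermediate objects and structure maps admit compatible cyclic extensions, all over $\Q$.

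\textbf{Cyclic structures on the endpoints.} First I would identify the cyclic structure on both endpoints. On $\FFM_{2}$ it comes from identifying $\FFM_{2}(r)$ with the space of framed configurations of $r+1$ marked points on $S^{2} = \C \cup \{\infty\}$ modulo orientation-preserving Möbius transformations fixing a distinguished output point; the cyclic rotation then permutes which of the $r+1$ points plays the role of the output. The dual cyclic cooperad structure on $\BVc$ is determined by Poincaré duality on the Fulton--MacPherson compactifications, and one writes down the explicit action of cyclic rotations on the generators $\omega_{ij}$ and $\theta_{i}$.

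\textbf{Cyclic intermediate model.} Second, I would take an intermediate rational Hopf cooperad $\op{E}$ from one of the existing proofs, such as Ševera's model built from chord diagrams twisted by a rational Drinfeld associator, or a graph-complex model. Such a construction is naturally symmetric in the inputs and output, so it carries a canonical cyclic structure on the nose; one checks that the cooperadic composition maps are cyclically equivariant, which is a combinatorial verification. The quasi-isomorphism $\op{E} \to \BVc$ is determined on a small set of generators, and its cyclic equivariance is immediate.

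\textbf{Main obstacle.} The hardest step is to verify cyclic equivariance of the other quasi-isomorphism $\op{E} \qiso \Omega^{*}_{\natural}(\FFM_{2})$ while keeping it defined over $\Q$. This map is built from configuration space integrals twisted by a chosen associator, and cyclic equivariance constrains the allowed associators. The plan is to invoke the existence of a rational Drinfeld associator with the additional symmetry needed for cyclic compatibility, namely an even associator. An alternative obstruction-theoretic route starts from an arbitrary rational operadic formality and shows that the obstructions to promoting it to a cyclically equivariant one vanish, as they live in graph-complex cohomology groups whose vanishing in the relevant bidegrees follows from known computations with the Kontsevich graph complex.
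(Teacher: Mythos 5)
Your overall architecture matches the paper's: both proofs run a zigzag from $\Omega$-forms on a combinatorial model of $\FFM_2$ through a Drinfeld--Kohno-type intermediate Hopf cooperad (the Chevalley--Eilenberg complex $C(t^R)$ of the ribbon braid Lie algebras, i.e.\ your ``chord diagrams twisted by a rational associator'') down to $\BVc$, using that $\FFM_2$ is a $K(\pi,1)$ so that the operad of parenthesized ribbon braids $\PaRB$ can serve as the source. The projection $C(t^R)\to\BVc$ and the cyclic structures on the endpoints are handled as you describe.

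The divergence is at your ``main obstacle.'' You assert that cyclic equivariance of the associator-dependent map constrains the choice of associator and propose to resolve this by invoking an even rational associator (or an unspecified obstruction-theoretic argument in graph complexes). The paper shows this constraint is vacuous: in Lemma~\ref{lem:barnatan_cyc} one computes the cyclic structure map $\sigma$ (the transposition $0\leftrightarrow 1$) on the generators $R$, $X$, $A$ of $\PaRB$ and on the generators $t_{ij}$ of $t^R$, and checks directly that the map $\PaRB\to GU(t^R)$ of \eqref{equ:barnatan_map} intertwines them for \emph{any} rational Drinfeld associator $\Phi$. The only inputs are the centrality of $t_{12}+t_{23}+t_{31}$ and the antisymmetry axiom $\Phi(y,x)=\Phi(x,y)^{-1}$, which every associator satisfies; evenness plays no role (the cyclic involution is $z\mapsto -1/z$, not a sign flip on the $t_{ij}$). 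So your proposal is not wrong --- an even rational associator exists and would a fortiori work --- but it imports an unnecessary hypothesis and, more importantly, leaves the actual equivariance verification (the only genuinely new content of the theorem relative to \cite{GiansiracusaSalvatore2010,Severa2010}) undone. To complete your argument you would still have to carry out the generator-by-generator check, at which point the extra hypothesis evaporates.
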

	The proof is a version of the proof of \v Severa \cite{Severa2010} for the chains statement, with some modification along the lines of Fresse's proof \cite[section 14]{FresseBook} of the analogous statement for the non-framed little disks operad.
	
	To begin with, let us recall some relevant objects and facts from those proofs.
	First, we have an operad map $\FM_1\to \FFM_2$, where the image are configurations supported on a line (say the real axis), and all framings are appropriately aligned.
	The spaces $\FM_1(r)$ are the Stasheff associahedra, and the set of corners of those associahedra can be identified with the set of rooted planar binary trees with leaves numbered $1,\dots,r$. 
    
	The operad of parenthesized ribbon braids $\PaRB(r)$ is the fundamental groupoid of $\FFM_2(r)$ with basepoints the set of corners of the associahedra $\FM_1(r)\subset \FFM_2(r)$.
    The groupoids $\PaRB(r)$ assemble into an operad in groupoids $\PaRB$.
	More concretely, as an operad in groupoids, $\PaRB$ is generated by the three morphisms $R_{1} = R \in \PaRB(1), X_{12} = X \in \PaRB(2), A_{123} = A \in \PaRB(3)$, see Figure~\ref{fig:gen-parb}.
    \begin{figure}[htbp]
      \centering
      $R = $
      \begin{tikzpicture}[scale=.33, baseline=.5cm]
        \draw (0,0) -- (3,0);
        \draw (0,4) -- (3,4);
        \draw[fill=lightgray]
        (1,0)
        .. controls (1,1) and (2,1) ..
        (2,2)
        .. controls (2,3) and (1,3) ..
        (1,4) -- (2,4) node[midway, above] {1}
        .. controls (2,3) and (1,3) ..
        (1,2)
        .. controls (1,1) and (2,1) ..
        (2,0) -- (1,0) node[midway, below] {1};
        \begin{scope}
          \clip (1.4, .7) rectangle (1.6,1.3);
          \draw[lightgray,double=black,double distance=.4pt,thick]
          (1,0)
          .. controls (1,1) and (2,1) ..
          (2,2);
        \end{scope}
        \begin{scope}
          \clip (1.4, 2.7) rectangle (1.6,3.3);
          \draw[lightgray,double=black,double distance=.4pt,thick]
          (2,4)
          .. controls (2,3) and (1,3) ..
          (1,2);
        \end{scope}
      \end{tikzpicture}
      \qquad
      $X = $
      \begin{tikzpicture}[scale=.33, baseline=.5cm]
        \draw (0,0) -- (5,0);
        \draw (0,4) -- (5,4);
        \draw[fill=lightgray]
        (3,0) -- (4,0) node[midway, below] {1}
        -- (2,4) -- (1,4) node[midway, above] {1}
        -- (3,0);
        \draw[fill=lightgray]
        (1,0) -- (2,0) node[midway, below] {2}
        -- (4,4) -- (3,4) node[midway, above] {2}
        -- (1,0);
      \end{tikzpicture}
      \qquad
      $A = $
      \begin{tikzpicture}[scale=.33, baseline=.5cm]
        \draw (0,0) -- (9,0);
        \draw (0,4) -- (9,4);
        \draw[fill=lightgray] (1,4) -- (2,4) node[midway, above] {1}
        -- (2,0) -- (1,0) node[midway, below] {1}
        -- (1,4);
        \draw[fill=lightgray] (5,4) -- (6,4) node[midway, above] {2}
        -- (4,0) -- (3,0) node[midway, below] {2}
        -- (5,4);
        \draw[fill=lightgray] (7,4) -- (8,4) node[midway, above] {3}
        -- (8,0) -- (7,0) node[midway, below] {3}
        -- (7,4);
      \end{tikzpicture}
      \caption{Generators of $\PaRB$}
      \label{fig:gen-parb}
    \end{figure}
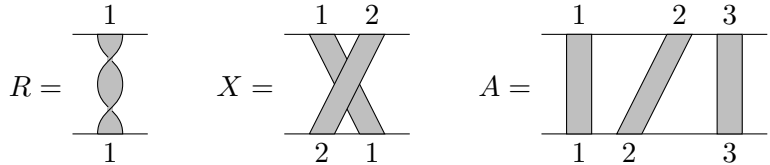

	The operads $\FM_1$ and $\FFM_2$ have cyclic structures, and the inclusion is a map of cyclic operads.
	Concretely, the cyclic structure map on $\FFM_2$ interchanging inputs $0\leftrightarrow 1$ (with $0$ being the ``output'' index) is realized by the involution of $\C$ via $z\mapsto - \frac 1 z$ if we fix the position of point $1$ in our configuration at $z=0$.
	This cyclic structure induces one on $\PaRB$.
    If we compute it, we get that the interchange $0 \leftrightarrow 1$ is given as follows:
    \begin{align*}
      R_{1} & \mapsto R_{1},
      & X_{12} & \mapsto X_{12}^{-1}R_{2}^{-1},
      & A_{123} & \mapsto A_{231}^{-1}.
    \end{align*}
	Next we consider the (associated graded of the) Lie algebra of the pure ribbon braid group $t^R(r)$. It generated by symbols $t_{ij}$ with $1\leq i,j\leq r$ with relations 
	\begin{equation}\label{equ:drinfeldkohno_rel}
	\begin{aligned}
	t_{ij} &= t_{ji} \\
	[t_{ij}, t_{kl}] &= 0 \quad \text{for $\{i,j\}\cap\{k,l\}=\emptyset$ } \\
	[t_{ij},t_{ki}+t_{kj}] &= 0\, .
	\end{aligned}
	\end{equation}
	The Lie algebras $t^R(r)$ assemble into an operad in Lie algebras, see \cite[1.3]{Severa2010} for explicit formulas for the composition morphisms (with notation $s_j=\frac 1 2 t_{jj}$). The operad in Lie algebras $t^R$ also has a cyclic structure. Concretely, the cyclic structure map corresponding to the transposition $0\leftrightarrow 1$ is given by 
	\begin{align*}
	t_{ij} & \mapsto t_{ij}  \\
	t_{1i} & \mapsto -\sum_{k=1}^r t_{ki} \\
	t_{11} & \mapsto \sum_{k,l=1}^r t_{kl}
	\end{align*}
	for $i,j\geq 2$.
	
	From the discussion it follows that the group-like elements in the complete universal enveloping algebra $GU(t^R)$ form a cyclic operad in groups.
	The key fact is now that choosing a rational Drinfeld associator one obtains a map of operads in groupoids 
	\begin{equation}\label{equ:barnatan_map}
	\PaRB \to GU(t^R)\, ,
	\end{equation}
	cf. \cite[2.2]{Severa2010}. 
	Then the map \eqref{equ:barnatan_map} is defined such that
	\begin{equation}\label{equ:barnatan_map2}
	\begin{aligned}
	R & \mapsto e^{t_{11}/2}, &
	X &\mapsto e^{t_{12}/2}, &
	A &\mapsto \Phi(t_{12},t_{23}),
	\end{aligned}
	\end{equation}
	where $\Phi$ is the chosen Drinfeld associator.
	Let us also verify the compatibility with the cyclic structure, which is not shown in \cite{Severa2010}.
	
	\begin{lemma}\label{lem:barnatan_cyc}
		The map \eqref{equ:barnatan_map} above is a map of cyclic operads.
	\end{lemma}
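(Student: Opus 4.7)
The plan is to verify cyclic compatibility directly on the three generators $R, X, A$ of $\PaRB$. Since \eqref{equ:barnatan_map} is already known to be a map of operads (hence $\Sigma_r$-equivariant), and since the cyclic operad structure on each side is generated by the symmetric group action together with the involution $\tau: 0\leftrightarrow 1$, it suffices to check that $\tau$ intertwines \eqref{equ:barnatan_map} on these generators; functoriality of operadic composition propagates the identity to all operations. Before attacking the generators, I record two centrality facts in $t^{R}$ that drive the computation. First, each diagonal $t_{ii}$ is central: applying the relation $[t_{ij}, t_{ki}+t_{kj}]=0$ with $k=i$ gives $[t_{ij},t_{ii}]=0$, and the remaining brackets vanish by disjoint support. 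Second, in $t^{R}(3)$, the element $T = t_{12}+t_{13}+t_{23}$ is central, directly from the Drinfeld--Kohno relations. Consequently any sum of $T$ with diagonal generators is central in $t^{R}(3)$.

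For $R_{1}$, the identity $\tau(e^{t_{11}/2}) = e^{t_{11}/2}$ follows immediately from $t_{11}\mapsto t_{11}$ in arity $1$. For $X_{12}$, the cyclic action yields $\tau(e^{t_{12}/2}) = e^{-(t_{12}+t_{22})/2}$, which factors as $e^{-t_{12}/2}\cdot e^{-t_{22}/2}$ by centrality of $t_{22}$, matching the image of $X_{12}^{-1}R_{2}^{-1}$. For $A_{123}$, using $t_{32}=t_{23}$, one must prove
\[
\Phi\bigl(-(t_{12}+t_{22}+t_{23}),\, t_{23}\bigr) = \Phi(t_{23},t_{31})^{-1}.
\]
The inversion axiom $\Phi(y,x) = \Phi(x,y)^{-1}$ for Drinfeld associators rewrites the right-hand side as $\Phi(t_{13},t_{23})$. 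The two first arguments differ by $-(T+t_{22})$, a central element. Since $\log\Phi$ is a Lie series in its two formal arguments, every Lie bracket involving a central element vanishes, so substituting a central additive shift for either argument leaves $\Phi$ unchanged, and the identity follows.

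The hard part is the $A$ case, which bundles the inversion axiom for $\Phi$ with both centrality statements. Once those ingredients are in hand, the computation collapses to a single line. By contrast, the $R$ and $X$ cases, as well as the reduction to generators, are essentially bookkeeping on top of the operadic compatibility already established in \cite{Severa2010}.
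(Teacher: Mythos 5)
Your computations for $R$, $X$, and $A_{123}$ coincide with the paper's, down to the same ingredients: centrality of the diagonal generators $t_{ii}$ and of $t_{12}+t_{13}+t_{23}$ in $t^{R}(3)$, and the antisymmetry $\Phi(y,x)=\Phi(x,y)^{-1}$ of the associator. (Your remark that a central additive shift of an argument leaves $\Phi$ unchanged tacitly uses that $\log\Phi$ has no linear term, which holds for Drinfeld associators, so that is fine.) The one place where your argument is thinner than the paper's is the reduction step. You assert that $\Sigma_r$-equivariance plus compatibility of $\tau=(0\,1)$ with the generators $R,X,A$ propagates to all operations; but $\Sigma_r$-equivariance only transports $\tau$-compatibility from $A_{123}$ to a relabelled copy $\rho_{*}A_{123}$ when $\tau\rho\tau^{-1}$ again lies in $\Sigma_3$, i.e.\ when $\rho$ fixes the input $1$ --- otherwise the conjugate is a transposition $(0\,k)$ outside $\Sigma_3$ and the bookkeeping becomes circular. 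This is exactly why the paper's proof verifies a second instance, $\sigma(A_{213})=A_{312}\mapsto\Phi(t_{31},t_{12})$, in addition to $A_{123}$. To close the gap you should either check the remaining $\Sigma_3$-translates of $A$ directly (one-line computations of the same kind as the one you did) or argue that they are generated from the verified elements via the hexagon/pentagon relations; as written, the claim that ``functoriality of operadic composition propagates the identity to all operations'' is not fully justified.
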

\begin{proof}
	We just have to verify that the cyclic structure map corresponding to the transposition $0\leftrightarrow 1$, which we shall denote by $\sigma$, is preserved. This can be done on the generators:
		\begin{align*}
	\sigma(R) = R &\mapsto e^{t_{11}/2} = \sigma(e^{t_{11}/2}) \\
	\sigma(X) = X^{-1} R^{-1}_2 &\mapsto e^{-t_{12}/2-t_{22}/2}
	= \sigma(e^{t_{12}/2}) \\
	\sigma(A_{123}) = A_{231}^{-1} &\mapsto \Phi(t_{23},t_{31})^{-1}
	=\Phi(t_{31},t_{23}) = \Phi(-t_{12}-t_{23},t_{23}) = \sigma(\Phi(t_{12},t_{23}) \\
	\sigma(A_{213}) = A_{312} &\mapsto \Phi(t_{31},t_{12})
	=\Phi(-t_{12}-t_{23},-t_{13}-t_{23}) = \sigma(\Phi(t_{21},t_{13}) \, .
	\end{align*}
	Here we used the fact that $t_{12}+t_{23}+t_{31}\in t^R(3)$ is central and the antisymmetry relation for the associator.
\end{proof}
	
	Now we are ready to give a proof of Theorem \ref{thm:FFMformality}.
	
	\begin{proof}[Proof of Theorem \ref{thm:FFMformality}]
		Following \cite{Severa2010} we will take as a simplicial model for the framed little disks operad the nerve of the operad in groupoids $\PaRB$
		, $N_\bullet \PaRB$. 
		Let us construct a zigzag
		\[
		H^*(\FFM_2) \xleftarrow{\simeq} C t^R \xrightarrow{\simeq}  \Omega(N_\bullet \PaRB).
		\] 
		Here $C$ denotes the Chevalley--Eilemberg dgca and the left-hand arrow is a quasi-isomorphism of dg Hopf cooperads. 
		The right-hand arrow is a quasi-isomorphism of dg Hopf collections, which additionally satisfies the conditions of section \ref{sec:model_definition}. 
		From the fact that $\FFM_2$ is a $K(\pi,1)$ space, we deduce that its singular chains $S_\bullet \FFM_2$ are weak equivalent to its fundamental groupoid $N_\bullet \PaRB$. 
		It follows that $H^*(\FFM_2)$ is indeed a model of $\FFM_2$ in the sense of section \ref{sec:model_definition}. 
		
		The map $C t^R\to H(\FFM_2)$ is simply the obvious projection, sending the generators $t_{ij}$ to $\omega_{ij}$ and all commutators $[x^R,y^R]$ to zero.
		
		For the map $C t^R\to \Omega(N_\bullet \PaRB)$ we first consider the composition of morphisms of simplicial operads
		\begin{equation}\label{equ:FFMcomp}
		N_\bullet \PaRB \to B_\bullet G U(t^R) \xrightarrow{\cong}  \eta_\bullet(t^R) \xhookrightarrow{\simeq} \MC_\bullet(t^R)=\G C(t^R).
		\end{equation}
		Here the first map is induced by the map \eqref{equ:barnatan_map} (using a choice of rational Drinfeld associator)
		. The object $\eta_\bullet(t^R)=\{m\in \MC_\bullet(t^R)\mid hm=0 \}$ is Getzler's nerve \cite{Getzler} (denoted by $\gamma_{\bullet}(-)$ there), consisting of those elements of the Maurer--Cartan space which are annihilated by the Dupont contraction. For the isomorphism $B_\bullet G U(t^R) \to \eta_\bullet(t^R)$ see \cite[Theorem 13.2.6]{FresseBook} and \cite{Getzler}.
		The final arrow in \eqref{equ:FFMcomp} is just the inclusion. For the identification $\MC_\bullet(t^R)=\G C(t)$, where $\G=\Hom_{sSet}(-,\Omega(\Delta^\bullet))$ is Sullivan's realization functor, we again refer to \cite[Theorem 13.1.9]{FresseBook}.
		Now the adjunction unit gives us a map of dg Hopf cooperads
		\[
		C(t^R) \to\Omega_\sharp( \G C(t^R)) 
		\]
		where the first map is a morphism of dg Hopf cooperads, and the composition is a morphism of dg Hopf collections satisfying \eqref{eq:hopf-model}. Composing with $\Omega_\sharp(-)$ of the morphism \eqref{equ:FFMcomp} we hence obtain a morphism of dg Hopf cooperads
		\[
		C(t^R) \to\Omega_\sharp(N_\bullet \PaRB),
		\]
		which is equivalent to a morphism of dg Hopf collections 
		\[
			C(t^R) \to\Omega(N_\bullet \PaRB)
		\]
		satisfying the conditions of section \ref{sec:model_definition}.
		The only thing we need to check is that the above morphism is a quasi-isomorphism.
		But we know $H(C(t^R))\cong H(\Omega(N_\bullet \PaRB))\cong \BVc$, and hence we just need to check that generators are mapped to generators, which is a simple explicit computation.
	\end{proof}

	\subsection{Rational formality for configurations on the cylinder}
	Our next goal is to find a dg Hopf model for the configuration space of points on the cylinder $\FFM_C$, together with the involution $I:\FFM_C\cong (\FFM_C)^{op}$ induced by the map $z\mapsto \frac 1 z$.
	
	\begin{proposition}\label{prop:FMC_formality}
		The algebra object in right $\FFM_2$-modules $\FFM_C$ is rationally formal, i.e., a dg Hopf model for the pair $(\FFM_2, \FFM_C)$ is given by the pair $(H(\FFM_2),H(\FFM_C))=(\BVc, \BVc_C)$.
	\end{proposition}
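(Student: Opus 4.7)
The plan is to bootstrap from the cyclic formality of $\FFM_2$ just proved. The first observation is that $\FFM_C$ as an algebra in right $\FFM_2$-modules is entirely encoded by the cyclic operad structure on $\FFM_2$: by definition $\FFM_C(r)$ is the fiber of the forgetful map $\FFM_2(r+1)\to\FFM_2(1)=S^1$ reading off the framing at the distinguished input $0$; the right $\FFM_2$-action is operadic composition at the inputs $1,\dots,r$; and the algebra multiplication is composition at the distinguished input, which becomes a usual operadic composition only after applying the cyclic involution $I$ of \eqref{equ:IFMC}.

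Next, I would take as candidate rational model $\BVc_C(r) \coloneqq \BVc(r+1)/(\theta_0)$, where $\theta_0$ is the framing generator at the distinguished input. The forgetful map $\FFM_2(r+1)\to S^1$ is in fact a \emph{trivial} fibration, since $\FFM_2(r+1)=\FM_2(r+1)\times\mathrm{SO}(2)^{r+1}$ is projected onto one of the $\mathrm{SO}(2)$-factors; so $\FFM_C(r)\cong \FM_2(r+1)\times \mathrm{SO}(2)^r$ and the quotient $\BVc(r+1)/(\theta_0)$ computes its cohomology and gives a dgca model. I would then equip the collection $\BVc_C$ with a coalgebra-in-right-$\BVc$-comodules structure by dualizing Section \ref{sec:Bmodules}: the $\BVc$-comodule structure comes from the cocomposition at the non-distinguished inputs, while the coproduct comes from applying the cyclic cooperad structure of $\BVc$ at the distinguished input and descending to the quotient by $\theta_0$. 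The compatibility of the two structures is a formal consequence of the cyclic cooperad axioms.

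Finally, to show that $(\BVc,\BVc_C)$ is a Hopf model of $(\FFM_2,\FFM_C)$, I would re-run the cyclic formality zigzag $H^*(\FFM_2)\xleftarrow{\sim} C(t^R)\xrightarrow{\sim}\Omega_\sharp(N_\bullet\PaRB)$ from the proof of Theorem \ref{thm:FFMformality}, but now pulled back along the fiber inclusion $\FFM_C(r)\hookrightarrow\FFM_2(r+1)$ on the right and quotiented by the image of $\theta_0$ on the left. Since all the morphisms in that zigzag intertwine the cyclic cooperad structures, the quotiented zigzag lives in the category of coalgebras in right comodules, and each arrow remains a quasi-isomorphism because the fibration being trivial prevents any derived issues from arising when taking fibers.

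The main obstacle, and the only substantive verification, is to check that the coalgebra coproduct on $\BVc_C$ induced by the cyclic cooperad structure of $\BVc$ really models the topological algebra multiplication on $\FFM_C$ given by stacking cylinders. This reduces to a direct computation on the generators $t_{ij}$ and $s_j$ of $t^R$, entirely analogous to Lemma \ref{lem:barnatan_cyc}: one simply traces through the cyclic structure map $\sigma$ and matches it with the explicit involution $I$ of \eqref{equ:IFMC}. Everything else is routine transport of structure along the formality quasi-isomorphism, using that the trivialization of $\FFM_2(r+1)\to S^1$ makes the quotient by $\theta_0$ and the fiber-taking on forms strictly agree.
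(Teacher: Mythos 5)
Your proposal is correct and follows essentially the same route as the paper: both reduce the statement to the cyclic formality of Theorem \ref{thm:FFMformality} by forming the fiber over the distinguished input at every stage of the zigzag ($\PaRB_C$, $t^R_C$, and their nerves/Maurer--Cartan spaces), checking that the associator map respects the extra input and the involution $I$ on generators, and identifying the resulting cohomology with $\BVc_C(r)\cong\BVc(r+1)/(\theta_*)$. The only cosmetic difference is that you phrase the passage to fibers as a quotient by $\theta_0$ of the already-built zigzag, whereas the paper constructs the fiber objects $\PaRB_C$ and $t^R_C$ directly and extends the map \eqref{equ:barnatan_map} explicitly on the generators $X_*, A_{*12}, A_{1*2}, A_{12*}$.
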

	
	Again we will use that $\FFM_C(r)$ is a $K(\pi,1)$-space for every $r$. Concretely, we consider the algebra in right $\PaRB$-modules $\PaRB_C$ such that
	\[
	\PaRB_C(r) = \fiber \bigl( \PaRB(r+1)\to \PaRB(1) \bigr).
	\] 
	Elements $\PaRB_C(r)$ can be seen as braidings of $r$ ribbons and one (non-ribbon) string, corresponding to the marked point $*$. This is a subalgebra of $\PaRB_{1}$ from Example~\ref{ex:mod_from_op}. The algebra in right $\PaRB$-modules $\PaRB_C$ is generated by the operations $X_*\in \PaRB_C(1)$ and $A_{*12},A_{1*2},A_{12*}$, cf. Figure \ref{fig:gen-parb1}.
    \begin{figure}[htbp]
      \centering
      $X_{*} = $
      \begin{tikzpicture}[scale=.33, baseline=.5cm]
        \draw (0,0) -- (5,0);
        \draw (0,4) -- (5,4);
        \draw[very thick]
        (3.5,0) node[below] {*}
        -- (1.5,4) node[above] {*};
        \draw[fill=lightgray]
        (1,0) -- (2,0) node[midway, below] {1}
        -- (4,4) -- (3,4) node[midway, above] {1}
        -- (1,0);
      \end{tikzpicture}
      \qquad
      $A_{*12} = $
      \begin{tikzpicture}[scale=.33, baseline=.5cm]
        \draw (0,0) -- (9,0);
        \draw (0,4) -- (9,4);
        \draw[very thick] (1.5,4) node[above] {*}
        -- (1.5,0) node[below] {*};
        \draw[fill=lightgray] (5,4) -- (6,4) node[midway, above] {1}
        -- (4,0) -- (3,0) node[midway, below] {1}
        -- (5,4);
        \draw[fill=lightgray] (7,4) -- (8,4) node[midway, above] {2}
        -- (8,0) -- (7,0) node[midway, below] {2}
        -- (7,4);
      \end{tikzpicture}
      \caption{Generators of $\PaRB_{C}$ ($A_{1*2}$ and $A_{12*}$ are similar)}
      \label{fig:gen-parb1}
    \end{figure}
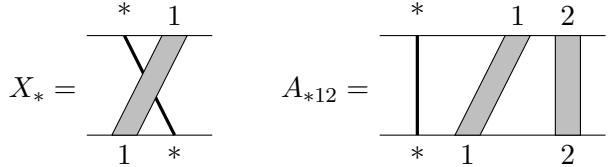

	Note also that $\PaRB_C$ comes with a natural involution 
	\[
	I: \PaRB_C \cong \PaRB_C^{op}
	\]
	induced by the map $z\mapsto \frac 1 z$.
	(Note also that the $(-)^{op}$ on the right-hand side refers solely to the algebra structure, not to the groupoid structure.)
	
	Furthermore, we can build an algebra object in operadic right $t^R$-modules composed of the Lie algebras
	\[
	t^R_C(r) = \fiber(t^R(r+1)\to t^R(1)).
	\]
	More concrely, $t^R_C(r)$ has a presentation in terms of generators $t_{ij}$, $t_{*i}$, $1\leq i,j\leq r$ and relations which are straightforward extensions of \eqref{equ:drinfeldkohno_rel}.
	The object $t^R_C$ comes with the involution 
	\[
	I: t^R_C \to (t^R_C)^{op}
	\]
	so that $I(t_{ij})=t_{ij}$ and $I(t_{*i})=-\sum_j t_{ji}$.
	(Note that again that the  $(-)^{op}$ refers only to the algebra structure.)
	
	\begin{lemma}
		The map \eqref{equ:barnatan_map} may be extended to a map of pairs 
		\[
		(\PaRB,\PaRB_C) \to (U(t^R), U(t^R_C))
		\]
		by the assignment on generators\najib{This should be $X_{*}$ right?}\ricardo{Agree. But actually shouldn't it be $ X_* \mapsto e^{t_{*1}/2} $?}\najib{I think so yes.}
		\begin{align*}
		  X_* & \mapsto e^{t_{*1}/2} 
		  &
		  A_{*12} & \mapsto \Phi(t_{*1},t_{12}) 
		  &
		  A_{1*2} & \mapsto \Phi(t_{*1},t_{*2}) 
		  &
		  A_{12*} & \mapsto \Phi(t_{12},t_{*2}) \, .
		\end{align*}
		The map furthermore intertwines the involutions $I$ above.
	\end{lemma}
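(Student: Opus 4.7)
The strategy is to obtain the extension essentially \emph{for free} by restricting the Bar-Natan morphism \eqref{equ:barnatan_map}. Indeed, by definition $\PaRB_C(r) = \fiber(\PaRB(r+1) \to \PaRB(1))$ and $t^R_C(r) = \fiber(t^R(r+1) \to t^R(1))$, and the two forgetful maps are induced from operadic composition with the unit, so they commute with any morphism of operads. Since \eqref{equ:barnatan_map} is such a morphism, it descends to a morphism between the fibers. The same naturality argument shows that the resulting morphism is automatically compatible with the right $\PaRB$-action (inherited from the operadic composition at the non-$*$ inputs) and with the algebra structure on $\PaRB_C$ (inherited from operadic composition at the $*$-input).

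Next I would read off the images on generators using \eqref{equ:barnatan_map2}. The element $X_* \in \PaRB_C(1)$ is, under the identification $\PaRB_C(1) \hookrightarrow \PaRB(2)$, the braiding $X$ with one strand relabeled by $*$ instead of by $2$; hence its image is $e^{t_{*1}/2}$. Similarly, $A_{*12}, A_{1*2}, A_{12*}$ correspond to the associator $A \in \PaRB(3)$ with $*$ placed in position $1$, $2$, or $3$, so their images are $\Phi(t_{*1},t_{12})$, $\Phi(t_{*1},t_{*2})$, $\Phi(t_{12},t_{*2})$ respectively, matching the stated formulas.

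It remains to verify the intertwining of the involutions. One checks identities of the shape $(\text{image of }I(g)) = I(\text{image of }g)$ for $g$ ranging over the four generators. On the algebraic side, $I$ acts on $t^R_C$ by fixing $t_{ij}$ and sending $t_{*i}$ to $-\sum_j t_{ji}$; on the geometric side, $I$ is induced by the involution $z \mapsto 1/z$, and its action on $X_*, A_{*12}, A_{1*2}, A_{12*}$ can be computed in the same spirit as the one for $\sigma$ in the proof of Lemma~\ref{lem:barnatan_cyc}. Once the geometric formulas for $I(g)$ are in hand, the required identities reduce, via the Drinfeld--Kohno-type relations \eqref{equ:drinfeldkohno_rel} and the centrality of $\sum_j t_{*j}$ and of $\sum_{i<j}t_{ij}$ in the relevant arity, to the antisymmetry and hexagon axioms satisfied by any Drinfeld associator $\Phi$.

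The main technical obstacle will be precisely this last step: pinning down $I(g)$ explicitly in $\PaRB_C^{op}$ for each geometric generator, and then rewriting exponentials of sums like $-\sum_j t_{j1}/2$ into a form to which the associator axioms apply. Everything else in the proof is a more or less direct invocation of naturality of fibers together with the already-established map \eqref{equ:barnatan_map} for $\PaRB$.
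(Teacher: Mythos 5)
Your first two steps (obtaining the map by restricting \eqref{equ:barnatan_map} to the fibers $\PaRB_C(r) = \fiber(\PaRB(r+1)\to\PaRB(1))$ and $t^R_C(r)=\fiber(t^R(r+1)\to t^R(1))$, then reading off the images of the generators from \eqref{equ:barnatan_map2}) are exactly what the paper does. The divergence is in how the compatibility with $I$ is handled, and here your proposal leaves the real content undone. You defer the intertwining of the involutions to a generator-by-generator computation, which you yourself flag as ``the main technical obstacle'' and do not carry out; as written, the proof is therefore incomplete precisely at the one point where something needs to be checked.

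The observation you are missing is that there is nothing left to check: the involution $I$ on $\PaRB_C$ (and on $t^R_C$) is not extra data but is itself induced functorially from the cyclic structure of $\PaRB$ (respectively $t^R$) --- geometrically, swapping the two ends of the cylinder is the interchange of the distinguished marked point with the output $0=\infty$, i.e.\ a cyclic structure map. Since Lemma~\ref{lem:barnatan_cyc} has already established that \eqref{equ:barnatan_map} is a map of \emph{cyclic} operads, the restriction to the fibers automatically intertwines the induced involutions; the hexagon/antisymmetry manipulations you anticipate were in effect already performed in the proof of that lemma. If you insist on the direct route, you would first have to derive explicit formulas for $I(X_*)$, $I(A_{*12})$, $I(A_{1*2})$, $I(A_{12*})$ in $\PaRB_C^{op}$ and then verify four identities in $U(t^R_C)$; this is doable but amounts to redoing Lemma~\ref{lem:barnatan_cyc} in disguise, so you should either carry it out in full or, better, invoke the functoriality argument.
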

\begin{proof}
	Note that the construction of $\PaRB_C$ out of $\PaRB$, and similarly of $t^R$ out of $t^R_C$ was functorial in the input cyclic operad, with the cyclic structure giving rise to the involution. Then we obtain our map from \eqref{equ:barnatan_map}, and it is clear that it is given by the stated formulas.
\end{proof}
	
	We now take the pair of nerves $(N_\bullet \PaRB, N_\bullet \PaRB_C)$ as our simplicial model for the pair $(\FFM_2,\FFM_C)$. We can then finish the proof of  Proposition \ref{prop:FMC_formality}.

	\begin{proof}[Proof of  Proposition \ref{prop:FMC_formality}]
	 We have morphisms of pairs of simplicial operads and algebras in right modules, with involution
     \begin{multline*}
       (N_\bullet \PaRB, N_\bullet \PaRB_C)
       \to 
       (B_\bullet GU(t^R), B_\bullet GU(t^R_C)) \to \\
       \to 
       (\eta_\bullet (t^R), \eta_\bullet (t^R_C))
       \to
       (\MC_\bullet (t^R), \MC_\bullet (t^R_C))
       =
       (\G C(t^R), \G C(t^R_C)).
     \end{multline*}
     The first arrow again uses a rational Drinfeld associator.
     As in the proof of Theorem~\ref{thm:FFMformality} above, we then get a morphism of pairs of dg collections
	\[
	(C(t^R), C(t^R_C)) \to (\Omega(N_\bullet \PaRB),\Omega(N_\bullet \PaRB_C) )
	\]
	compatible with the operadic structures and the involutions. Again one checks that the induced map on cohomology 
	\[
	(\BVc,\BVc_C) \to (\BVc,\BVc_C)
	\]
	is the identity, by tracing the generators, so that $(C(t^R), C(t^R_C))$ is a dg Hopf model for $(N_\bullet \PaRB, N_\bullet \PaRB_C)$.
	
	Composing with the obvious projection
	\[
	(C(t^R), C(t^R_C)) \xrightarrow{\simeq} (\BVc, \BVc_C) 
	\]
	then finishes the proof of the proposition.
  \end{proof}

\subsection{Rational formality for configuration space of sphere with marked points}
Exactly as in the previous subsection one shows the following result.
\begin{proposition}\label{prop:FM1p_formality}
	The $\FFM_C$-module $\FFM_2^{(1,p)}$ is rationally formal, i.e., a dg Hopf model for the triple $(\FFM_2, \FFM_C, \FFM_2^{(1,p)})$ is given by the triple $(H(\FFM_2),H(\FFM_C),H( \FFM_2^{(1,p)}))=(\BVc, \BVc_C, \BVc_{(1,p)})$.
\end{proposition}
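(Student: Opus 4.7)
The plan is to mimic the two preceding proofs (Theorem~\ref{thm:FFMformality} and Proposition~\ref{prop:FMC_formality}) essentially verbatim, with one extra layer of module structure. Since the fiber definition~\eqref{equ:FFM21qdef} shows that $\FFM_2^{(1,p)}(r)$ is a $K(\pi,1)$ (being a fiber of a map between $K(\pi,1)$'s with connected fiber, as the connected components of $\FFM_2(r+p)$ surject onto those of $\FFM_2(p)$), I will first give a combinatorial/groupoid model and a corresponding Lie algebraic model, and then run the same zigzag through Getzler's nerve.

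First I would define the $\PaRB_C$-module in right $\PaRB$-modules
\[
\PaRB^{(1,p)}(r) = \fiber\bigl(\PaRB(r+p) \to \PaRB(p)\bigr),
\]
and the analogous object in Lie algebras
\[
t^{R,(1,p)}(r) = \fiber\bigl(t^R(r+p)\to t^R(p)\bigr),
\]
with an explicit presentation by generators $t_{ij}$ (for $1\le i,j\le r$) and $t_{*_a i}$ (for $1\le a\le p$ and $1\le i\le r$), subject to the obvious extension of the relations \eqref{equ:drinfeldkohno_rel}. The $p$ right $t^R_C$-actions (or, after applying the involution $I$ in \eqref{equ:IFMC}, left actions, which is how the structure was set up in Section~\ref{sec:conf_from_FFM}) are given by grouping one of the marked strands $*_a$ with the $r$ numbered strands, exactly as in the case $p=1$ already treated.

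Second, I would extend the Bar-Natan-type map \eqref{equ:barnatan_map}, using a fixed rational Drinfeld associator $\Phi$, to a morphism of triples
\[
(\PaRB, \PaRB_C, \PaRB^{(1,p)}) \longrightarrow \bigl(GU(t^R), GU(t^R_C), GU(t^{R,(1,p)})\bigr)
\]
compatible with all $\FFM_C$-actions (and the involutions on each $\PaRB_C$-factor). This compatibility is automatic: the three levels are all obtained functorially from the same cyclic operad structure on $\PaRB$ already treated in Lemma~\ref{lem:barnatan_cyc}, by taking iterated fibers. The values on the additional generators $X_{*_a}, A_{\bullet\bullet\bullet}$ involving the marked strands are prescribed by the same formulas as in the previous subsection with $*$ replaced by $*_a$.

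Third, I would take $(N_\bullet \PaRB, N_\bullet \PaRB_C, N_\bullet \PaRB^{(1,p)})$ as the simplicial model for $(\FFM_2, \FFM_C, \FFM_2^{(1,p)})$ (justified by the $K(\pi,1)$ observation above), and follow the zigzag of Theorem~\ref{thm:FFMformality} and Proposition~\ref{prop:FMC_formality}:
\[
N_\bullet \PaRB^{(1,p)} \to B_\bullet GU(t^{R,(1,p)}) \xrightarrow{\cong} \eta_\bullet(t^{R,(1,p)}) \hookrightarrow \MC_\bullet(t^{R,(1,p)}) = \G C(t^{R,(1,p)}),
\]
assembled with the analogous zigzags for $\PaRB$ and $\PaRB_C$ into a morphism of triples. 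The Sullivan adjunction then produces a morphism of dg Hopf collections
\[
\bigl(C(t^R), C(t^R_C), C(t^{R,(1,p)})\bigr) \longrightarrow \bigl(\Omega(N_\bullet \PaRB), \Omega(N_\bullet \PaRB_C), \Omega(N_\bullet \PaRB^{(1,p)})\bigr)
\]
satisfying the diagrams of Section~\ref{sec:model_definition}, which I compose with the obvious projection onto cohomology $(\BVc, \BVc_C, \BVc_{(1,p)})$. That the resulting map is a quasi-isomorphism is checked on generators exactly as in the previous two proofs.

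The only step that is not completely formal is checking that nothing new happens at the level of compatibility between the $\FFM_C$-actions and the Drinfeld associator: specifically, that the $p$ boundary module structures are all respected simultaneously. I expect this to be the main (mild) obstacle, but it reduces to the single-boundary case already handled in Proposition~\ref{prop:FMC_formality}, since the $p$ actions involve disjoint sets of generators $\{t_{*_a i}\}_a$ and commute with each other both on the groupoid and on the Lie algebra side.
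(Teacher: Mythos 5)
Your proposal is exactly the argument the paper intends: its proof of this proposition is the single sentence ``one just copies the proofs of Theorem~\ref{thm:FFMformality} and Proposition~\ref{prop:FMC_formality} with minimal (obvious) changes,'' and your write-up correctly spells out what those changes are (the fiber versions $\PaRB^{(1,p)}$ and $t^{R,(1,p)}$, the extended associator map, and the same Getzler-nerve zigzag). No gaps; you have simply made explicit what the paper leaves to the reader.
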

\begin{proof}
	One just copies the proofs of Theorem \ref{thm:FFMformality} and Proposition \ref{prop:FMC_formality} with minimal (obvious) changes.
\end{proof}

\subsection{Models for configuration spaces on surfaces}\label{sec:confspacemodels}

The final goal of this section is to assemble the above pieces and show:

\begin{proposition}\label{prop:thinmodel}
  Let $\BVc_{(g,g)} = H(\FFM_{2}^{(1+(g-1),g)})$.
  The pair $(\BVc, \HH_g)$ with
	\[
	\HH_g = \otimes^{(1,1)\dots(g,g)}_{\BVc_C} \BVc_{(g,g)} 
	\]
	is a dg Hopf cooperad/comodule model for the pair $(\FFM_2, \FFM_{\Sigma_g})$ for $g\geq 1$.
\end{proposition}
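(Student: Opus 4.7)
The plan is to transfer, from topology to algebra, the presentation of $\FFM_{\Sigma_g}$ as a derived tensor product established in section \ref{sec:conf_from_FFM}. Concretely, by the final construction of that section, we have a weak equivalence of right $\FFM_2$-modules
\[
\FFM_{\Sigma_g} \simeq \hotimes_{\FFM_C}^{(1,1)\dots(g,g)}\FFM_{2}^{(1+(g-1),g)},
\]
where the right-hand side is, by definition, the fat geometric realization of a semi-simplicial diagram whose $n$-simplices are $\FFM_{2}^{(1+(g-1),g)} \otimes \FFM_C^{\otimes gn}$ (with face maps built from the algebra product on $\FFM_C$, the $g$ pairs of left/right $\FFM_C$-actions on $\FFM_{2}^{(1+(g-1),g)}$, and the involution of Remark \ref{rem:involution}), assembled in the symmetric monoidal category $\Mod_{\FFM_2}$.

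First I would apply the operadic upgrade of Sullivan's functor $\Omega^{*}_{\natural}$ componentwise. Using the standard identification that $\Omega^{*}$ turns a fat simplicial realization in spaces into the fat cosimplicial totalization of the dual semi-cosimplicial diagram in dgcas (compatible with the Hopf/cooperadic/comodule structures, via the Künneth maps just as in \eqref{eq:hopf-model}), one obtains a zigzag of dg Hopf comodule models from the cosimplicial object
\[
\Omega^{*}(\FFM_{2}^{(1+(g-1),g)}) \otimes \Omega^{*}(\FFM_C)^{\otimes \bullet}
\]
to the pair $(\BVc, \FFM_{\Sigma_g})$ after fat totalization. This step is formal once one has the cyclic/Hopf upgrades of section \ref{sec:conf_from_FFM}.

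Next I would invoke the formality results already established: Propositions \ref{prop:FMC_formality} and \ref{prop:FM1p_formality} give quasi-isomorphisms of Hopf algebras/comodules $(\BVc, \BVc_C, \BVc_{(1,p)}) \to (\Omega^{*}(\FFM_{2}), \Omega^{*}(\FFM_{C}), \Omega^{*}(\FFM_{2}^{(1,p)}))$ that are compatible with the algebra product, the boundary-module actions, and the involution $I$ (the latter since the proof of Proposition \ref{prop:FMC_formality} was set up to respect $I$, which is a cyclic-type datum, and this is what upgrades the $(1,p)$-module statement to a $(1+(g-1),g)$-module statement after reinterpreting some right actions as left actions). These compatibilities let the level-wise quasi-isomorphisms assemble into a morphism of semi-cosimplicial dg Hopf comodules from the algebraic cosimplicial object
\[
\BVc_{(g,g)} \otimes \BVc_C^{\otimes \bullet}
\]
(whose fat totalization is precisely $\HH_g$ by the definition in section \ref{sec:dual-constr-hopf}) to the one coming from $\Omega^{*}$.

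Finally I would take fat totalization. As recalled explicitly in section \ref{sec:dual-constr-hopf}, the fat totalization $\int_{[n]\in \Delta_+} (-)^n \otimes \Omega(\Delta^n)$ is homotopy invariant on semi-cosimplicial dg-modules, so a level-wise quasi-isomorphism on the cosimplicial level passes to a quasi-isomorphism of totalizations. The main obstacle, which is really the only substantive point beyond invoking already-proven facts, is checking that $\Omega^{*}$ genuinely intertwines the topological fat realization with the algebraic fat totalization at the level of Hopf comodule structures: one needs the Künneth map $\Omega^{*}(X) \otimes \Omega^{*}(Y) \to \Omega^{*}(X \times Y)$ to be a quasi-isomorphism throughout (true since all spaces involved have finite-type rational cohomology) and needs to verify that the semi-cosimplicial coface maps produced by $\Omega^{*}$ agree on the nose with those induced by the algebra comultiplication on $\BVc_C$ and the $\BVc_C$-coaction on $\BVc_{(g,g)}$ up to coherent homotopy. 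Granted this, the composition of the quasi-isomorphisms assembles into the desired equivalence identifying $(\BVc, \HH_g)$ as a dg Hopf cooperad/comodule model of $(\FFM_2, \FFM_{\Sigma_g})$.
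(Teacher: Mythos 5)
Your overall strategy --- dualize the topological bar construction \eqref{equ:ourFMg}, feed in the levelwise formality results, and conclude by homotopy invariance of the fat totalization --- is the same as the paper's. But there is a genuine gap at the step you yourself flag as ``the only substantive point,'' and it is not a routine verification. The object $\Omega^{*}(\FFM_{2}^{(1+(g-1),g)}) \otimes \Omega^{*}(\FFM_C)^{\otimes \bullet}$ is not actually a semi-cosimplicial object: the coface maps are dual to the product $\FFM_C\times\FFM_C\to\FFM_C$ and to the boundary-module actions, so they land in $\Omega^{*}$ of \emph{products} of spaces, not in tensor products of forms --- this is precisely the ``K\"unneth goes the wrong way'' problem that motivates the whole dg Hopf model formalism of Section~\ref{sec:model_definition}. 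Consequently one cannot ``assemble the level-wise quasi-isomorphisms into a morphism of semi-cosimplicial dg Hopf comodules'' with target that object, and commutativity of the coface squares only ``up to coherent homotopy'' is not sufficient to pass to totalizations without supplying (and then rectifying) all the higher coherences.

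The paper's proof of Proposition~\ref{prop:thinmodel} resolves exactly this issue by never forming the naive tensor-product cosimplicial object on the $\Omega^{*}$ side. Instead it reuses the strict intermediaries from the formality proofs: the Chevalley--Eilenberg models $C(t^R)$, $C(t^R_C)$, $C(t^R_{(g,g)})$ are honest Hopf cooperads, coalgebras and comodules whose (co)structure maps are genuine dgca morphisms into tensor products, so the cosimplicial object $C(t^R_{(g,g)})\otimes C(t^R_C)^{\otimes\bullet}$ and hence $\hotimes^{(1,1)\dots(g,g)}_{C(t^R)} C(t^R_{(g,g)})$ are strictly defined. This maps, levelwise and strictly compatibly with the cofaces, into $\Omega$ of the simplicial bar construction on $(N_\bullet\PaRB_{(g,g)}, N_\bullet\PaRB_C)$ via the chain \eqref{equ:mod_comp} through $B_\bullet GU(t^R)$ and $\MC_\bullet(t^R)$, and then into $\Omega$ of the realization using $\Omega(|-|)=\Tot(\Omega(-))$. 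Homotopy invariance of the fat totalization makes this composite a quasi-isomorphism, and the final projection $C(t^R_{(g,g)})\to\BVc_{(g,g)}$ (with $C(t^R_C)\to\BVc_C$) produces $\HH_g$. To repair your argument you would need to replace your tensor-product cosimplicial object by such a strict model, or else carry out the coherence/rectification argument you only allude to.
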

\begin{proof}
	Assembling the morphisms of Theorem \ref{thm:FFMformality}, Propositions \ref{prop:FMC_formality}, \ref{prop:FM1p_formality} and their proofs we find the following morphisms of operadic modules, parallel to the morphisms of simplicial operads \eqref{equ:FFMcomp}.
	\begin{equation}\label{equ:mod_comp}
	\FFM_{\Sigma_g}
	\simeq 
	\hotimes_{N_\bullet \PaRB}^{(1,1)\dots (g,g)} N_\bullet \PaRB_{(g,g)}
	\to
	\hotimes_{B_\bullet GU(t^R)}^{(1,1)\dots (g,g)} B_\bullet G U(t_{(g,g)}^R)
	\to 
	\hotimes_{ \MC_\bullet(t^R)}^{(1,1)\dots (g,g)} \MC_\bullet(t_{(g,g)}^R).
	\end{equation}
	We furthermore have a morphism of dg Hopf collections
	\[
	\hotimes^{(1,1)\dots (g,g)}_{C(t^R)} C(t^R_{(g,g)})
	\to 
	\hotimes^{(1,1)\dots (g,g)}_{\Omega(\MC_\bullet(t^R))} 
	\Omega(\MC_\bullet(t_{(g,g)}^R))
	\to 
	\Omega( \hotimes_{ \MC_\bullet(t^R)}^{(1,1)\dots (g,g)} \MC_\bullet(t_{(g,g)}^R) )
	\]
	respecting the operadic module structures.
	
	Applying $\Omega(-)$ to \eqref{equ:mod_comp} and composing with the previous morphism hence gives a morphism of dg Hopf collections 
	\[
 \hotimes^{(1,1)\dots (g,g)}_{C(t^R)} C(t^R_{(g,g)})
 \to 
 \Omega(\hotimes_{N_\bullet \PaRB}^{(1,1)\dots (g,g)} N_\bullet \PaRB_{(g,g)}),
	\]
	respecting the operadic module structure.
	We claim it is a quasi-isomorphism.
	Note that forms of the (fat or not) realization are the (fat or not) totalization of the forms
	\[
	\Omega(| -|) = \Tot(\Omega(-))
	\]
	by adjunction. But by homotopy invariance of the fat totalization the statement then follows, since inside the end we have quasi-isomorphisms (see Section~\ref{sec:dual-constr-hopf}).
	
	Finally, we just compose with the quasi-isomorphism and projection
	\[
	\hotimes^{(1,1)\dots (g,g)}_{C(t^R)} C(t^R_{(g,g)}) \to 
	\otimes^{(1,1)\dots(g,g)}_{\BVc_C} \BVc_{(g,g)} 
	\]
	to finish the proof of the proposition.
\end{proof}

For later reference we shall also note that the coaction of the nullary $\BVc$-cogenerator produces maps
\begin{align}\label{equ:piHHg}
\pi_j^*:\HH_g(1)&\to \HH_g(r) 
& 
\pi_{ij}^*:\HH_g(2)&\to \HH_g(r) 
\end{align}
for $1\leq i\neq j\leq r$, which are (in some sense) dual to the forgetful maps \eqref{equ:piFFMSigma}, forgetting all but the $j$-th (and $i$-th) point from a configuration.

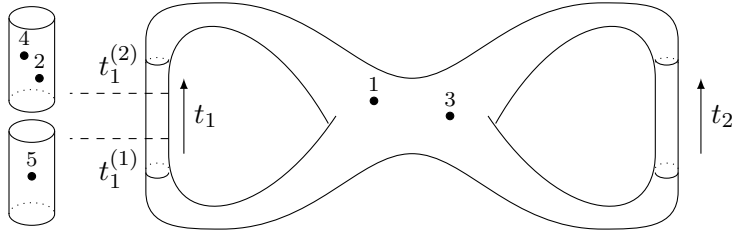
\begin{figure}
\[
\begin{tikzpicture}[baseline=1cm, scale=1,int1/.style={draw, circle,fill,inner sep=1pt, outer sep=0pt}, label distance=-.5mm]
\draw (0,0)
.. controls (-.7,0) and (-1.3,-1) .. (-2.5,-1)
.. controls (-3,-1) and (-3.5,-1) .. (-3.5,-.5)
--(-3.5,1.5)
.. controls (-3.5,2) and (-3,2) .. (-2.5,2)
.. controls (-1.3,2) and (-.7,1) .. (0,1)
.. controls  (.7,1) and (1.3,2) .. (2.5,2)
.. controls (3,2) and (3.5,2) ..  (3.5,1.5)
--(3.5,-.5)
.. controls (3.5,-1) and (3,-1) .. (2.5,-1)
.. controls (1.3,-1) and (.7,0)  .. (0,0);
\draw (-1,.5)
.. controls (-2,-1) and (-3.2,-1) .. (-3.2,-.0)
-- (-3.2,1)
.. controls (-3.2,2) and (-2,2) .. (-1.1,.4)
;
\begin{scope}[yshift=0cm,yscale=-1]
\draw (1,-.5)
.. controls (2,1) and (3.2,1) .. (3.2,-.0)
-- (3.2,-1)
.. controls (3.2,-2) and (2,-2) .. (1.1,-.4)
;
\end{scope}
\draw (-3.5,-0.25) arc (180:360:.15cm and .075cm);
\draw[dotted] (-3.5,-0.2) arc (180:0:.15cm and .075cm);
\draw (-3.5,1.25) arc (180:360:.15cm and .075cm);
\draw[dotted] (-3.5,1.2) arc (180:0:.15cm and .075cm);
\draw (-3,-.0) edge[-latex] node[right] {$t_1$} (-3,1);
\begin{scope}[yshift=0cm,xscale=-1]
\draw (-3.5,-0.25) arc (180:360:.15cm and .075cm);
\draw[dotted] (-3.5,-0.2) arc (180:0:.15cm and .075cm);
\draw (-3.5,1.25) arc (180:360:.15cm and .075cm);
\draw[dotted] (-3.5,1.2) arc (180:0:.15cm and .075cm);
\draw (-3.8,-.0) edge[-latex] node[right] {$t_2$} (-3.8,1);
\end{scope}

\draw[dashed] (-3.2,.2) edge node[below]{$t_1^{(1)}$} (-4.5,.2);
\begin{scope}[yshift=-.5cm,xshift=.2cm]
\draw (-5.5,-0.3) arc (180:360:.3cm and .15cm);
\draw[dotted] (-5.5,-0.3) arc (180:0:.3cm and .15cm);
\draw (-5.5,.8) arc (180:360:.3cm and .15cm);
\draw[] (-5.5,.8) arc (180:0:.3cm and .15cm);
\draw (-5.5,-.3)--(-5.5,.8) (-4.9,-.3)--(-4.9,.8);
\node [int1, label={$\scriptstyle 5$}]  at (-5.2,.2) {};
\end{scope}

\draw[dashed] (-3.2,.8) edge node[above]{$t_1^{(2)}$} (-4.5,.8);
\begin{scope}[yshift=1cm,xshift=.2cm]
\draw (-5.5,-0.3) arc (180:360:.3cm and .15cm);
\draw[dotted] (-5.5,-0.3) arc (180:0:.3cm and .15cm);
\draw (-5.5,.8) arc (180:360:.3cm and .15cm);
\draw[] (-5.5,.8) arc (180:0:.3cm and .15cm);
\draw (-5.5,-.3)--(-5.5,.8) (-4.9,-.3)--(-4.9,.8);
\node [int1, label={$\scriptstyle 2$}]  at (-5.1,0) {};
\node [int1, label={$\scriptstyle 4$}]  at (-5.3,.3) {};
\end{scope}

\node [int1, label={$\scriptstyle 3$}]  at (.5,.5) {};
\node [int1, label={$\scriptstyle 1$}]  at (-.5,.7) {};
\end{tikzpicture}
		\]
		\caption{\label{fig:quenches}  We will consider the coend construction \eqref{equ:ourFMg} topologically as a sphere with ``thin'' handles attached (shown here for $g=2$).
		 The longitudinal coordinates along each handle we denote be $t_j\in[0,1]$. Points in a configuration can either be in the ``bulk'' sphere (like 1 and 3 above), or in ``packets'' $\FFM_1$ on the handle. For example, here are 2 packets depicted on the first handle, one at longitudinal coordinate $t_1^{(1)}$ and one at $t_1^{(2)}$. }
	\end{figure}
	
	\subsection{Notation for elements of $\BVc_{(g,g)}$, $\BVc_C$ and $\BVc$ }\label{sec:BVcnotation}

	Recall the presentation \eqref{equ:BVcpresentation} for the dgca $\BVc(r)$. 
	We shall introduce similar notation for the related objects  $\BVc_C$ and $\BVc_{(g,g)}$, for later use.
	
	First, as a graded commutative algebra, $\BVc_C(r)$ is generated by degree $+1$-elements ${\omega_{*j}=\omega_{j*}},$ $\omega_{ij}=\omega_{ji}, \ \theta_i$ for $1\leq i\neq j\leq r$, with relations
	\begin{align*}
	\omega_{ij}&=\omega_{ji}	\\
	\omega_{ij}\omega_{jk} +\omega_{jk}\omega_{ki} +\omega_{ki}\omega_{ij} &=0 \\
	\omega_{*i}\omega_{ij} +\omega_{ij}\omega_{*j} +\omega_{*j}\omega_{*i} &=0\, .
	\end{align*}
	Here a representative of $\omega_{ij}$ in the configuration space of $r$ framed points on $\C^*$ is $\frac 1 {2\pi} d\arg(z_i-z_j)$. A representative of $\omega_{*i}$ is likewise $\frac 1 {2\pi} d\arg(z_i)$. Finally, if the framing (angle) at point $i$ is $\phi_i$, then a representative of $\theta_i$ is $\frac 1 {2\pi} d\phi_i$.
	
	The action of our involution $I:\BVc_C \to (\BVc_C)^{op}$, induced by the map $z\mapsto \frac 1 z$ of the cylinder is, as one can easily check:
	\begin{equation}\label{equ:IBVc1}
	\begin{aligned}
	I(\omega_{*i}) &= -\omega_{*i} \\
	I(\omega_{ij}) &= \omega_{ij} -\omega_{*i} -\omega_{*j} \\
	I(\theta_i) &= \theta_{i} -2\omega_{*i}
	\end{aligned}\, .
	\end{equation}
	
	Note also that the formulas above can be slightly streamlined by setting $\omega_{ii}\coloneqq \theta_i$.
	
	Second, we consider $\BVc_{(g,g)}(r)=H(\FFM_2^{(g,g)}(r))$.
	Note that to define $\FFM_2^{(g,g)}$ we have fixed $2g-1$ points.
	We will label these points by $\underline 1,\underline 2,\dots,\underline g, \overline 2,\dots, \overline g$.
	Here we have a right (boundary) $\FFM_C$-action at the points $\underline j$, and left actions at the points $\overline j$. (The missing left action is at $\infty$.)
	
	Accordingly, $\BVc_{(g,g)}(r)$ is generated as a graded commutative algebra by the following elements.
	\begin{itemize}
	\item $\omega_{ij}=\omega_{ji}$ for $1\leq i\neq j\leq r$.
	\item $\theta_i$ for $1\leq i\leq r$.
	\item $\omega_{i\underline j}=\omega_{\underline j i}$ for $1\leq i\leq r$, $1\leq j\leq g$.
	\item  $\omega_{i\overline j}=\omega_{\overline j i}$ for $1\leq i\leq r$, $2\leq j\leq g$.
	\end{itemize}
	We leave it to the reader to adapt the relations of $\BVc(r+2g-1)$ to relations in $\BVc_{(g,g)}(r)$.
	
	In case we use finite sets $S$ for indexing, e.g., $\BVc(S)$, $\BVc_C(S)$, $\BVc_{(g,g)}(S)$ we continue to use the same notation as above, just with the symbol set $\{1,\dots,r\}$ replaced by $S$.
	
	\subsection{Discussion of $\HH_g$}\label{sec:HHg_notation}
	Our model $\HH_g$ for the configuration space of points is defined as a categorial end -- a fat totalization.
	This yields a compact definition, but makes it notationally slightly hard to construct concrete elements in $\HH_g$, as we shall need to do in the next subsection.
	
	To facilitate the discussion, we shall study now in some more detail what these elements are, and introduce some suggestive notation.
	By definition, the fat totalization is a subspace of the direct product
	\begin{equation}\label{equ:HHgstrata0}
	\HH_g(S) \subset 
	\prod_{\underline r=(r_1,\dots,r_g)}
	\prod_{f : S \to B(\underline r)}
	\BVc_{(g,g)}(f^{-1}(0))
	\otimes 
	\bigotimes_{j=1}^g
	\left( 
	\BVc_C(f^{-1}( (j,1)))
	\otimes
	\cdots \otimes \BVc_C(f^{-1}( (j,r_j)))
	\otimes \Omega(\Delta^{r_j})
	\right)\, .
	\end{equation}
	Here $\Omega(\Delta^{r_j})$ are the polynomial forms on the $r_j$-simplices.
	The outer product in the formula is over tuples $\bar r=(r_1,\dots,r_g)$ of numbers $r_j=0,1,\dots$. The inner product is over functions from our set (of ``points'') $S$ into the set 
	\[
	B(\underline r) = \{0,(1,1),\dots,(1,r_1),(2,1),\dots,(2,r_2),\dots ,(g,1),\dots,(g,r_g)\}.
	\]	
	Geometrically, we may think that $\HH_g(S)$ is a version of ``differential forms'' on the embedding space of $S$ into $\Sigma_g$, i.e., on the configuration space of $|S|$ points in $\Sigma_g$.
	In this case we should think of $\Sigma_g$ as a sphere on which we glue $g$ thin  handles, see Figure \ref{fig:quenches}.
	Then the underlying model for the configuration space is \eqref{equ:ourFMg}, which is a quotient of
	\begin{equation}\label{equ:HHFFM}
	\coprod_{\underline r=(r_1,\dots,r_g)}
	\coprod_{f : S \to B(\underline r)}
	\FFM_2^{(g,g)}(f^{-1}(0))
	\times \bigtimes_{j=1}^g
	\FFM_C(f^{-1}( (j,1)))
    \times \cdots
    \times \FFM_C(f^{-1}( (j,r_j))) \times \Delta^{r_j}).
	\end{equation}
	The points can either be in the bulk, or they can be in one of the $g$ handles. In the latter case they have a coordinate $t_j\in [0,1]$ along the $j$-th handle, and they can be in various ``packets'' of the same coordinate $t_j$.
	Concretely, in the formula above, the function $f$ determines how the points are divided into packets and put on handles, with $f^{-1}(0)\subset S$ being the set of points in the bulk(-sphere), and $f^{-1}((j,k))$ being the points in the $k$-th packet on the $j$-th handle.
	
	Reformulating this, we may think that the outer products over $(\underline r, f)$ divides our ``configuration space'' into various strata, and the corresponding factor in \eqref{equ:HHgstrata0} are the ``differential forms on that stratum''.
	The conditions in the totalization that singles out the subspace $\HH_g(S)$ can then be interpreted as saying that $\HH_g(S)$ consists of the ``transversely continuous'' differential forms, i.e., those whose pieces on the strata agree on the mutual boundaries of the strata.
	
	Before we make this more explicit, we shall introduce one simplification. 
	For homotopy theoretic reasons we used above the fat totalization $\HH_g$. However, all elements in this space we will ever need to explicitly touch in this paper lie in the ordinary (non-fat) totalization 
	\[
	\HH_g' \subset \HH_g.
	\]
	Similarly to \eqref{equ:HHgstrata0}, we can see $\HH_g'(S)$ as a subset of the product
	\begin{equation}\label{equ:HHgstrata}
	\HH_g'(S) \subset 
	\prod_{\underline r=(r_1,\dots,r_g)}
	\prod_{f : S \to B(\underline r) \atop \text{surj.}}
	\BVc_{(g,g)}(f^{-1}(0))
	\otimes 
	\bigotimes_{j=1}^g
	\left( 
	\BVc_C(f^{-1}( (j,1)))\otimes \cdots \otimes \BVc_C(f^{-1}( (j,r_j)))
	\otimes \Omega_{poly}(\Delta^{r_j})
	\right).
	\end{equation}
	The difference to \eqref{equ:HHgstrata0} is that now the inner product is only over maps $f : S \to B(\underline r)$ that surject onto $B(\underline r)\setminus \{0\}$, instead of all maps.
	Geometrically speaking, we only define the ``differential form'' on nondegenerate strata. Then the codegeneracies are implicitly used to define the inclusion $\HH_g'\subset\HH_g$.
	For us this has the advantage that while there are infinitely many $(\underline r, f)$ in the products in \eqref{equ:HHgstrata0}, there are only finitely many in 
	\eqref{equ:HHgstrata}, making it much easier to construct elements explicitly.
	
	Indeed, below we will describe elements of $\HH_g'(S)$ by giving for each stratum $\sS=(\underline r,f)$ the value in the corresponding factor of \eqref{equ:HHgstrata}, and then check that the conditions imposed by the totalization are satisfied.
	We will say that we define the ``differential form'' on each stratum, and then check that they agree on strata boundaries.

	More precisely, say our task is to define an element $\alpha\in \HH_g'(S)$. Then we have to provide a collection of elements
	\[
	\alpha_{\sS}\in 
	\BVc_{1,g}(f^{-1}(0))
	\otimes 
	\bigotimes_{i=1}^g
	\bigotimes_{j=1}^{r_i}
	\BVc_C(f^{-1}( (i,j)))
	\otimes \Omega_{poly}(\Delta^{r_1}\times \cdots \times \Delta^{r_g}),
	\]
	one for every stratum $\sS=(\underline r,f)$ as above.
	This collection yields a well-defined element of $\HH_g'(S)$ iff the continuity conditions of the end are satisfied.
	More concretely, the conditions can be divided into two types.
	\begin{itemize}
		\item First, consider a stratum $\sS=(\underline r,f)$ and suppose that $\tilde \sS=(\tilde{\underline r},\tilde f)$ is almost the same stratum, except that the ``packets'' $(i,j)$ and $(i,j+1)$ have been merged.
		This means concretely that $\tilde r_i=r_i-1$ (with the other $r_p=\tilde r_p$), and that $\tilde f^{-1}((i,j))=f^{-1}((i,j))\cup f^{-1}((i,j))$. 
		Thinking of $\Delta^{r_i}$ as configurations of $r_i$ ordered points on an interval, this corresponds to taking a boundary stratum $\p_j\Delta^{r_i}$ of the simplex where the $j$-th point collides with the $j+1$st for $0 < j < r_{j}$.
		Then the continuity condition reads:
		\begin{equation}\label{equ:HHgcontinuity1}
		\alpha_{\sS} \mid_{\p_j\Delta^{r_i}}
		=
		\Delta
		\alpha_{\tilde \sS},
		\end{equation}
		where on the left-hand side we restrict the form piece in $\Omega_{poly}(\Delta^{r_i})$ to the boundary $\p_j\Delta^{r_i}$, and on the right-hand side we apply the cocomposition 
		\[
		\Delta : \BVc_C(\tilde f^{-1}((i,j)))
		\to \BVc_C(f^{-1}((i,j)))\otimes \BVc_C(f^{-1}((i,j+1)))
		\]
		to the factor corresponding to the ``merged packet''.
      \item The remaining conditions stem from the remaining boundary strata of the simplices, namely $\p_0\Delta^{r_i}$ and $\p_{r_i}\Delta^{r_i}$.
        These appear when one packet on a handle hits the front or back end of the handle.
		The relevant condition is very similar to the previous one, except that now one has to take a cocomposition of the bulk term.
		For example, consider the case that the first packet on the $i$-th handle hits the left end ($t=0$) of the $i$-th handle.  We consider the stratum  $\tilde \sS=(\tilde{\underline r},\tilde f)$ defined essentially like $\sS$, but such that $\tilde f^{-1}(0)=f^{-1}(0)\cup f^{-1}((i,1))$, so that the first packet on the $i$-th handle is merged with the bulk. 
		Then the continuity condition reads
		\begin{equation}\label{equ:HHgcontinuity2}
		\alpha_{\sS} \mid_{\p_0\Delta^{r_i}}
		=
		\Delta
		\alpha_{\tilde \sS},
		\end{equation}
		where now the cocomposition $\Delta$ is obtained from the cocomposition of the bulk term
		\[
		\Delta : \BVc_{(g,g-1)}(\tilde f^{-1}(0)) \to \BVc_{(g,g-1)}( f^{-1}(0)) \otimes 
		\BVc_{C}(\tilde f^{-1}((i,1)))\, .
		\]
		Here we use the $i$-th right $\BVc$-comodule structure on $\BVc_{(g,g-1)}$.
		Similarly, in the case that the $r_i$-th packet hits the right end ($t=0$) of the $i$-th handle we define $\tilde \sS=(\tilde{\underline r},\tilde f)$ with $\tilde f^{-1}(0)=f^{-1}(0)\cup f^{-1}((i,r_i))$, and the continuity condition reads
		\begin{equation}\label{equ:HHgcontinuity3}
		\alpha_{\sS} \mid_{\p_{r_i}\Delta^{r_i}}
		=
		\Delta
		\alpha_{\tilde \sS}.
		\end{equation}
		Now, for the cocomposition, we use the appropriate left $\BVc$-comodule structure on $\BVc_{(g,g-1)}$,
		\[
		\Delta : \BVc_{(g,g-1)}(\tilde f^{-1}(0)) \to \BVc_{(g,g-1)}( f^{-1}(0)) \otimes 
		\BVc_{C}(\tilde f^{-1}((i,r_i)))\, .
		\]
	\end{itemize}

	\subsubsection{Example: $\HH_g'(1)$ and $\HH_g'(2)$}\label{sec:HHg12_example_strata}
	For concreteness, and for later use we shall explicitly discuss the strata, i.e., the pairs $\sS\coloneqq(\underline r,f)$ occurring in \eqref{equ:HHgstrata} for $\HH_g'(1)$ and $\HH_g'(2)$.
	We begin with $\HH_g'(1)$. In this case we can have the following types of strata $\sS\coloneqq (\underline r,f)$.
	\begin{itemize}
		\item $\underline r=(0,\dots,0)$ and (necessarily) $f(1)=0$. This corresponds geometrically to one point being on the ``bulk'' sphere.
		\item $\underline r=(1,0,\dots,0)$ and (necessarily) $f(1)=(1,1)$. This means geometrically that the point is on the first handle.
		\item $\underline r=(0,\dots,0,1,0,\dots,0)$, with $r_j=1$ in the $j$-th position, and $j\geq 2$. Then, necessarily, $f(1)=(j,1)$. This means geometrically that the point is on the $j$-th handle.
	\end{itemize}
	Note that the last two cases could obviously be unified. However, below we shall always treat the first handle separate from the others, since the corresponding $\BVc_C$-coactions on $\BVc_{(g,g)}$ are defined differently for the first, and for the other handles.
	
	Next consider $\HH_g'(2)$. Here we shall generally need to distinguish 14 types of strata $\sS=(\underline r,f)$, again owed to the fact the we will need to take the first handle different from the others.
	We also introduce letters to be able to reference the strata later.
	\begin{itemize}
		\item (Stratum $A$) For $\underline r=(0,\dots,0)$ we necessarily have $f(1)=f(2) = 0$. Geometrically, both points are in the bulk sphere. 
		\item For $\underline r=(1,0,\dots,0)$ we have the options:
		\begin{itemize}
			\item (Stratum $B_1$) $f(1)=(1,1)$, $f(2)=0$. Point 1 is on the first handle, point 2 on the bulk sphere. 
			\item (Stratum $B_2$) $f(1)=0$, $f(2)=(1,1)$. The situation is reversed.
			\item (Stratum $B_{(12)}$) $f(1)=f(2)=(1,1)$. Here both points are ``infinitesimally close'' and on the first handle.
		\end{itemize}
	\item Likewise, for $\underline r=(0,\dots,1,\dots,0)$  (with 1 at position $j\geq 2$) we have the options:
	\begin{itemize}
		\item (Strata $C_1$) $f(1)=(1,1)$, $f(2)=0$. 
		\item (Strata $C_2$) $f(1)=0$, $f(2)=(1,1)$.
		\item (Stratum $C_{(12)}$) $f(1)=f(2)=(1,1)$. 
	\end{itemize}
	\item For $\underline r=(2,0,\dots,0)$ both points need to be on the first handle. Their ordering on the handle gives rise to 2 strata.
	\begin{itemize}
		\item (Stratum $B_{12}$) $f(1)=(1,1)$, $f(2)=(1,2)$. Both points are on the first handle, with coordinates $t_1^{(1)}<t_1^{(2)}$, cf. Figure \ref{fig:quenches}.
		\item (Stratum $B_{21}$) $f(1)=(1,2)$, $f(2)=(1,1)$. 
	\end{itemize}
	\item Likewise, for $\underline r=(0,\dots,2,\dots,0)$ we obtain the strata $C_{12}$ with $f(1)=(j,1)$, $f(2)=(j,2)$, and $C_{21}$ with $f(1)=(j,2)$, $f(2)=(j,1)$
	\item For $\underline r=(1,0,\dots,0,1,0,\dots,0)$, i.e., $r_1=r_j=1$, we must have $f(1)=(1,1)$, $f(2)=(j,1)$ (Strata $B_1C_2$) or $f(1)=(j,1)$, $f(2)=(1,1)$ (Strata $B_2C_1$)
	\item For $\underline r=(0,,\dots,0,1,0,\dots,0,1,0,\dots,0)$, i.e., $r_i=r_j=1$, with $i\ne j$ we must have $f(1)=(i,1)$, $f(2)=(j,1)$ or $f(1)=(j,1)$, $f(2)=(i,1)$. We collectively denote these strata by $C_1C_2$.
\end{itemize}
	
The various strata are depicted in a schematic way in Figure \ref{fig:strata}.
The diagram in this Figure shows in addition which strata are ``adjacent'' in the sense that the totalization imposes boundary conditions between the values of our form on these strata. 
Concretely, that means in order to define an element $\alpha\in\HH_g'(2)$ below, we will need to provide, for each stratum $\sS=(\underline r,f)$ as above an element $\alpha_{\sS}$ of the corresponding factor of \eqref{equ:HHgstrata}.
Then we need to check, for each edge in the adjacency diagram of  Figure \ref{fig:strata}, that the corresponding continuity condition is satisfied.

\begin{figure}
	\newcommand{\mouse}{
		\draw (1,1) circle (.5cm);
		\draw (1,-1) circle (.5cm);
		\draw[fill=white] (0,0) circle (1cm);
	}
	\begin{align*}
	A &:
	\begin{tikzpicture}[baseline=-.65ex, scale=.6]
	\mouse
	\node at (0,0) {1 2};
	\end{tikzpicture}
	&
	B_1&:
	\begin{tikzpicture}[baseline=-.65ex, scale=.6]
	\mouse
	\node at (1.7,1.7) {1};
	\node at (0,0) {2};
	\end{tikzpicture}
	&
	B_2&:
	\begin{tikzpicture}[baseline=-.65ex, scale=.6]
	\mouse
	\node at (1.7,1.7) {2};
	\node at (0,0) {1};
	\end{tikzpicture}
	&
	B_{12}&:
	\begin{tikzpicture}[baseline=-.65ex, scale=.6]
	\mouse
	\node at (1,1.8) {2};
	\node at (1.8,1) {1};
	\end{tikzpicture}
	&
	B_{21}&:
	\begin{tikzpicture}[baseline=-.65ex, scale=.6]
	\mouse
	\node at (1,1.8) {1};
	\node at (1.8,1) {2};
	\end{tikzpicture}
	\\
	B_{(12)} &:
	\begin{tikzpicture}[baseline=-.65ex, scale=.6]
	\mouse
	\node[int, label=45:{12}] at (45:1.91) {};
	\end{tikzpicture}
	&
	C_1&:
	\begin{tikzpicture}[baseline=-.65ex, scale=.6]
	\mouse
	\node at (1.7,-1.7) {1};
	\node at (0,0) {2};
	\end{tikzpicture}
	&
	C_2&:
	\begin{tikzpicture}[baseline=-.65ex, scale=.6]
	\mouse
	\node at (1.7,-1.7) {2};
	\node at (0,0) {1};
	\end{tikzpicture}
	&
	C_{12}&:
	\begin{tikzpicture}[baseline=-.65ex, scale=.6]
	\mouse
	\node at (1,-1.8) {2};
	\node at (1.8,-1) {1};
	\end{tikzpicture}
	&
	C_{21}&:
	\begin{tikzpicture}[baseline=-.65ex, scale=.6]
	\mouse
	\node at (1,-1.8) {1};
	\node at (1.8,-1) {2};
	\end{tikzpicture}
	\\
	C_{(12)} &:
	\begin{tikzpicture}[baseline=-.65ex, scale=.6]
	\mouse
	\node[int, label=-45:{12}] at (-45:1.91) {};
	\end{tikzpicture}
	&
	B_1C_2 &:
	\begin{tikzpicture}[baseline=-.65ex, scale=.6]
	\mouse
	\node at (1.7,1.7) {1};
	\node at (1.7,-1.7) {2};
	\end{tikzpicture}
	&
	B_2C_1 &:
	\begin{tikzpicture}[baseline=-.65ex, scale=.6]
	\mouse
	\node at (1.7,1.7) {2};
	\node at (1.7,-1.7) {1};
	\end{tikzpicture}
	\end{align*}
	\[
	\begin{tikzcd}
	& \ar[-]{r} B_{12}\ar[-]{ddr} & B_1 \ar[-]{ddl} \ar[-]{r} \ar[-]{dr} & B_1C_2\ar[-]{r}  & C_2\ar[-]{r} \ar[-]{ddr}\ar[-]{dl} & C_{21}\ar[-]{ddl} & \\
	B_{(12)} \ar[-]{rrr}\ar[-]{ur}\ar[-]{dr} & & & A \ar[-]{rrr} \ar[-]{dr}\ar[-]{dl} & & & C_{(12)} \ar[-]{ul}\ar[-]{dl}\\
	& B_{21} \ar[-]{r} & B_2\ar[-]{r}  & B_2C_1\ar[-]{r}  & C_1 \ar[-]{r} & C_{12} & C_1C_2 \ar[-]{lluu} \ar[-,bend right]{ll}
	\end{tikzcd}
	\]
	\caption{\label{fig:strata} Depiction of the various top (i.e., $4$-)dimensional strata in our version of the configuration space of two points. For example in stratum $A$ both points are in the bulk, in stratum $B_1$ point 1 is on the first handle and point 2 is in the bulk.
		In stratum $C_2$ point $1$ is in the bulk and point $2$ on a handle indexed $j=2,\dots ,g$.
		In stratum $B_{12}$ both points are on the first handle, with $t_1^{(1)}\leq t_1^{(2)}$ etc. The diagram below indicates the intersections between the various strata, with a line between strata if their intersection is $3$-dimensional. }
\end{figure}
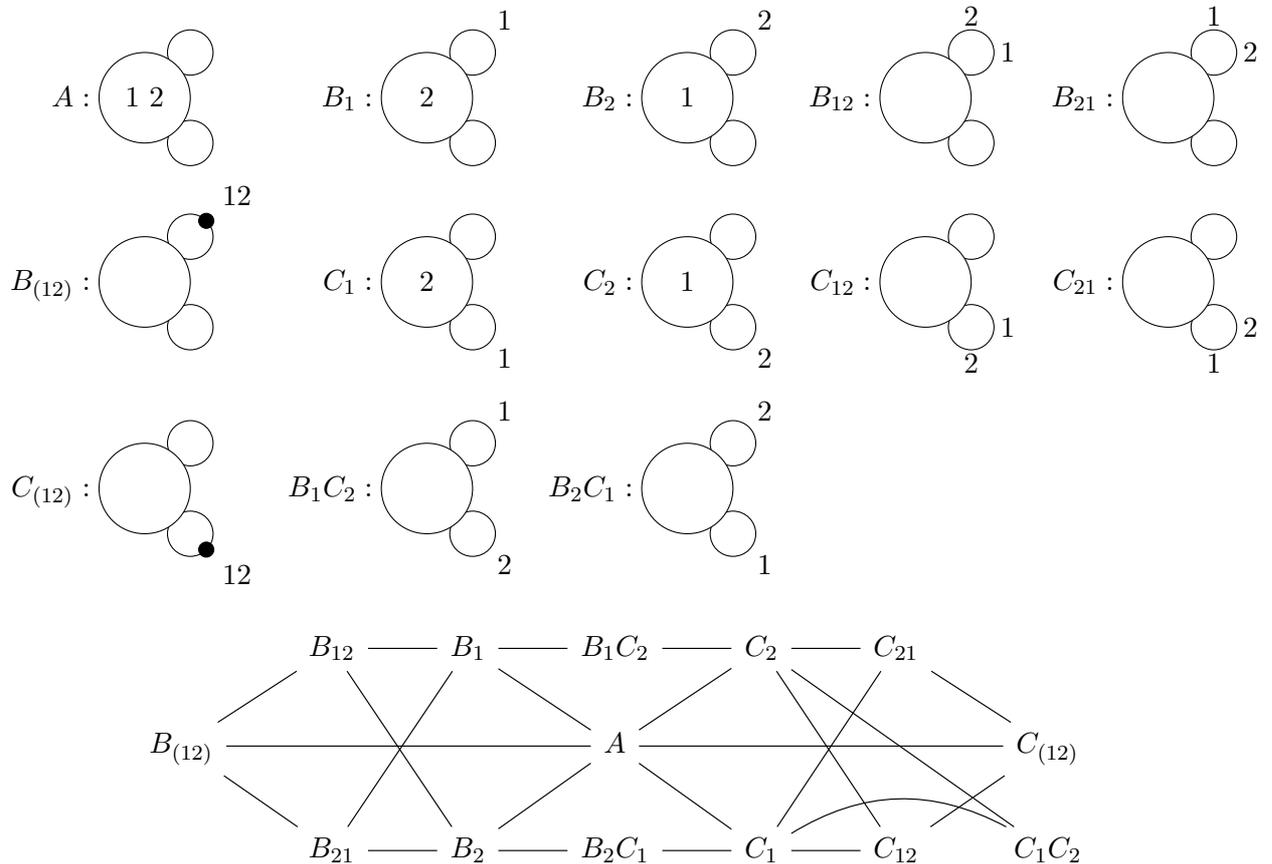

	\section{Proof of Theorem \ref{thm:main}}\label{sec:theproof}
	To unify the notation, we will first assume that the genus is positive, $g\geq 1$.
	The (simpler) case $g=0$ is then treated separately in section \ref{sec:g0case} below.
	(Note that the case $g=0$ has already been shown in \cite[Appendix B]{CamposWillwacher2016}, albeit over $\R$.)
	
	\subsection{Recollection of \cite{CamposWillwacher2016}-- another graphical model for configurations on $\Sigma_g$}\label{sec:recoll-citec-anoth}
	We begin with some recollections from \cite{CamposWillwacher2016}.
	Let $H_g\coloneqq H^*(\Sigma_g)$ be the cohomology of the surface of genus $g$, and let $1\in H_g^0$ denote class of the constant function $1$.
	Let similarly $\bar H_g=H^{\geq 1}(\Sigma_g)=H_g / \Q1$ be the reduced cohomology.
	We consider a collection of quasi-free graded commutative algebras
	\footnote{Note that in \cite{CamposWillwacher2016} the notation used for this algebra is $^*\BVGra$ whereas the notation $\BVGra$ is used for the linear dual.}
	\[
	\BVGra_{H_g}(r) = S(\omega_{ij}, e^1_j,\dots,e^{2g+1}_j)_{1\leq i\leq j\leq r},
	\]
	generated by symbols $\omega_{ij}\eqqcolon\omega_{ji}$ of degree 1 and symbols $e^k_j$ of degree 1 for $k<2g+1$ and of degree 2 for $k=2g+1$. We shall think of the $k$ as indexing a homogeneous basis $\{e^k\}_{k=0,\dots,2g+1}$ of $H_g$. It is also beneficial to allow the additional generator $e_j^0$, but then impose the relation $e_j^0=1$.
	Using this notation we define a differential such that 
	\[
	d\omega_{ij} = \sum_k e_i^k e_j^{k*}
	\]
	where $e_j^{k*}$ ranges over the Poincar\'e dual basis elements.
	
	Let us give a second, combinatorial definition of $\BVGra_{H_g}(r)$.
	Elements of $\BVGra_{H_g}(r)$ are linear combinations of diagrams with $r$ vertices numbered $1,\dots,r$, undirected edges and additionally decorations from $\bar H_g$ at vertices.
	An example is as follows:
	\[
\begin{tikzpicture}[baseline=-.65ex]
\node[ext] (v1) at (0,0) {1};
\node[ext] (v2) at (0.5,0) {2};
\node[ext] (v3) at (1,0) {3};
\node[ext] (v4) at (1.5,0) {4};
\node[ext] (w1) at (0.5,.7) {5};
	\node (i2) at (-.7,0) {$\scriptstyle \omega_1$};
\node (i3) at (.5,1.2) {$\scriptstyle \omega_2$};
\draw (v1) edge[dotted] (i2) edge (w1) (v2) edge (w1) (v3) edge (w1) (i3) edge[dotted] (w1) (v3) edge[loop below] (v3);
\end{tikzpicture}
	\] 
	Algebraically, a graph corresponds to a monomial in generators, with $\omega_{ij}$ being an edge between $i$ and $j$, and $\alpha_j$ the decoration $\alpha\in \bar H_g$ at vertex $j$. 
	
	The differential is then given graphically by removing one edge and replacing it by a diagonal
	\begin{equation}\label{equ:dongra}
	d\, 
\begin{tikzpicture}[baseline=-.65ex]
\node[ext] (v) at (0,0) {};
\node[ext] (w) at (0.7,0) {};
\draw (v) edge +(-.5,0) edge +(-.5,.5) edge +(-.5,-.5) edge (w)
(w) edge +(.5,0) edge +(.5,.5) edge +(.5,-.5);
\end{tikzpicture}
=
\sum_q
\begin{tikzpicture}[baseline=-.65ex]
\node[ext] (v) at (0,0) {};
\node[ext] (w) at (1,0) {};
\node (i1) at (0.3,.5) {$\scriptstyle e^q$};
\node (i2) at (0.7,-0.5) {$\scriptstyle e^{q*}$};
\draw (v) edge +(-.5,0) edge +(-.5,.5) edge +(-.5,-.5) 
(w) edge +(.5,0) edge +(.5,.5) edge +(.5,-.5);
\draw[dotted] (v) edge (i1) (w) edge (i2); 
\end{tikzpicture}
	\end{equation}
	Here the $e^q$ run over a basis of $H_g$, while $e^{q*}$ run over the corresponding dual basis.
	Furthermore, we omit decorations by the unit element $1$, i.e., we formally identify decorations by $1$ with no decoration.
	The product is graphically interpreted by superposing graphs at their vertices.
\begin{equation}\label{equ:prodgra}
\begin{tikzpicture}[baseline=-.65ex]
\node[ext] (v1) at (0,0) {1};
\node[ext] (v2) at (0.5,0) {2};
\node[ext] (v3) at (1,0) {3};
\node[ext] (v4) at (1.5,0) {4};
\node[ext] (w1) at (0.5,.7) {5};
\node[ext] (w2) at (1.0,.7) {6};
\draw (v1) edge (v2) edge (w1) (v2) edge (w1) (v3) edge (w1) ;
\end{tikzpicture}
\wedge
\begin{tikzpicture}[baseline=-.65ex]
\node[ext] (v1) at (0,0) {1};
\node[ext] (v2) at (0.5,0) {2};
\node[ext] (v3) at (1,0) {3};
\node[ext] (v4) at (1.5,0) {4};
\node[ext] (w1) at (0.5,.7) {5};
\node[ext] (w2) at (1.0,.7) {6};
\draw   (v2)  edge (w2) (v3)  edge (w2) (v4) edge (w2);
\end{tikzpicture}
=
\begin{tikzpicture}[baseline=-.65ex]
\node[ext] (v1) at (0,0) {1};
\node[ext] (v2) at (0.5,0) {2};
\node[ext] (v3) at (1,0) {3};
\node[ext] (v4) at (1.5,0) {4};
\node[ext] (w1) at (0.5,.7) {5};
\node[ext] (w2) at (1.0,.7) {6};
\draw (v1) edge (v2) edge (w1) (v2) edge (w1) edge (w2) (v3) edge (w1) edge (w2) (v4) edge (w2);
\end{tikzpicture}
\end{equation}
Furthermore, $\BVGra_{H_g}$ is a cooperadic right $\BVc$-comodule.
Concretely, consider the coaction $\Delta_T$ corresponding to the tree 
\[
T=
\begin{tikzpicture}[baseline=-.65ex,scale=.5]
\node at (0,3) {}
child {
	child{ child { node {$1$}}
		child { node {$\cdots$}} 
		child { node {$k$}} 
	}
	child{ node {$k+1$} }
	child{ node {$\cdots$} }
	child{ node {$r$} }
};
\end{tikzpicture}
\,.
\]
Then for $\Gamma\in\BVGra_{H_g}$ one sets
\[
\Delta_T \Gamma = \sum_{\gamma\subset \Gamma \atop V\gamma=\{1,\dot,k\}}\pm \underbrace{(\Gamma / \gamma)}_{\in \BVGra(r-k+1)} \otimes \underbrace{\prod_{(ij)\in E\gamma}\omega_{ij}}_{\in \BVc(k)},
\]
where the sum is over subgraphs $\gamma\subset \Gamma$ with vertex set $V\gamma=\{1,\dots,k\}$, not containing any $H_g$-decorations.
Then $\Gamma/\gamma$ is obtained by contracting the subgraph $\gamma$ to one new vertex. Finally, the product on the right is over the set of edges $E\gamma$ of $\gamma$ and the $\omega_{ij}$ are the generators of $\BV^c(k)$ as in \eqref{equ:BVcpresentation}, with $\omega_{ii}\coloneqq\theta_i$.

One of the main results of~\cite{CamposWillwacher2016} is that there is a map of collections of dgcas 
\[
A : \BVGra_{H_g} \to \Omega_{PA}(\FFM_{\Sigma_g})
\]
into the piecewise semialgebraic
forms\footnote{We refer to \cite{HardtLambrechtsTurchinVolic2011} for more details on semi-algebraic forms, and to \cite[Section 2]{CamposWillwacher2016} for the technical reasons why they are needed. However, for the present paper they will be of no relevance, and the reader can safely just consider them as some version of differential forms.} on the framed configuration space $\FFM_{\Sigma_g}$, which is also (in the sense of Section \ref{sec:model_definition}) compatible with the cooperadic comodule structures.
Note that $\BVGra$ is a quasi-free graded commutative 
algebra generated by the graphs with exactly one edge.
Hence to define $A$ it suffices to specify the image of one edge graphs.

To this end one picks a degree 1 differential form $\omega\in \Omega_{PA}(\FM_{\Sigma_g}(2))$, the \emph{propagator}, with the following properties.
\begin{itemize}
	\item $d\omega= \sum_j (\pi_1^*e^j)\wedge (\pi_2^*e^{j*})$ for some chosen representatives $e^j$ of a basis of $H_g$, with $e^{j*}$ the representatives of the Poincar\'e dual elements.
	\item The restriction $\omega\mid_{\p \FM_{\Sigma_g}(2)}\eqqcolon\eta$ is a fiberwise volume form on the unit tangent bundle $\p \FM_{\Sigma_g}(2)$. We may assume that this volume form is the one induced by a suitable Riemannian metric on $\Sigma_g$.
\end{itemize}
For a construction of $\omega$ see \cite[Sections 2 and 9]{CamposWillwacher2016}.
We may then construct the dgca map
\[
A : \BVGra_{H_g}(r) \to \Omega_{PA}(\FFM_{\Sigma_g}(r))
\]
by sending an edge between two vertices $i$ and $j$ to $\pi_{ij}^*\omega$, a tadpole at $i$ to $\pi_i^*\eta$, and extending to an algebra map, i.e., 
\begin{equation}\label{equ:AformulaR}
A(\Gamma) = \pm \prod_{(i,j)\in E\Gamma\atop i\neq j}\pi_{ij}^*\omega \wedge \prod_{(i,i)\in E\Gamma} \pi_i^*\eta \wedge \prod_{\alpha\text{ decoration at $i$}}\pi_i^*\alpha.
\end{equation}
Here $\pi_i$ (resp. $\pi_{ij}$) are the forgetful maps \eqref{equ:piFFMSigma} that forget all points except $i$ (resp. $i$ and $j$) from the configuration. Furthermore, the $\alpha$ in the last product runs over all $\bar H_g$-decorations in the graph, and is to be interpreted as the chosen representative in $\Omega_{PA}(\Sigma_g)$ of the cohomology class $\alpha$.

Note however, that $A$ is not a quasi-isomorphism.
To repair this one twists $\BVGra_{H_g}$. We will describe this combinatorially.
We consider a graph complex $\BVGraphs_{\Sigma_g}(r)$ whose elements are linear combinations of graphs similar to those before, but with two different kinds of vertices. There are $r$ numbered ``external'' vertices and an arbitrary (finite) number of unlabelled ``internal'' vertices, which we draw as black dots in pictures.
\[
\begin{tikzpicture}[scale=1.2]
\node[ext] (v1) at (0,0) {$\scriptstyle 1$};
\node[ext] (v2) at (.5,0) {$\scriptstyle 2$};
\node[ext] (v3) at (1,0) {$\scriptstyle 3$};
\node[ext] (v4) at (1.5,0) {$\scriptstyle 4$};
\node[int] (w1) at (.25,.5) {};
\node[int] (w2) at (1.5,.5) {};
\node[int] (w3) at (1,.5) {};
\node (i1) at (1.7,1) {$\scriptstyle \omega_1$};
\node (i2) at (1.3,1) {$\scriptstyle \omega_1$};
\node (i3) at (-.4,.5) {$\scriptstyle \omega_2$};
\node (i4) at (1.9,.4) {$\scriptstyle \omega_3$};
\node (i5) at (0.25,.9) {$\scriptstyle \omega_4$};
\draw (v1) edge (v2) edge (w1) (w1)  edge (v2) (v3) edge (w3) (v4) edge (w3) edge (w2) (w2) edge (w3);
\draw[dotted] (v1) edge (i3) (w2) edge (i2) edge (i1) (v4) edge (i4) (w1) edge (i5);
\node at (3,.2) {$\in \Graphs_{\Sigma_g}(4)$};
\end{tikzpicture}
\]
We require in addition:
\begin{itemize}
	\item (Connectivity condition) Every connected component has at least one external vertex.
	\item The internal vertices do not have tadpoles. 
\end{itemize}

The graded commutative product by gluing at external vertices \eqref{equ:prodgra} naturally extends to $\Graphs_{\Sigma_g}(r)$.
We can furthermore construct a map of graded commutative algebras
\[
F : \BVGraphs_{\Sigma_g}(r) \to \Omega_{PA}(\FFM_{\Sigma_g}(r))
\]
by sending a graph $\Gamma$ with $k$ internal vertices to the fiber integral
\begin{equation}\label{equ:fiberintR}
F(\Gamma) = \int_{\FFM_{\Sigma_g}(r+k)\to \FFM_{\Sigma_g}(r)} A(\Gamma'),
\end{equation}
where $\Gamma'$ is obtained by labelling the internal vertices by $r+1,\dots,r+k$ and adding tadpoles on those vertices.
(We are slightly imprecise with signs here, but refer to \cite{CamposWillwacher2016} for a more detailed treatment.)

One can furthermore define a differential on $\Graphs_{\Sigma_g}$ such that the map $F$ intertwines differentials.
The form of the differential is essentially dictated by Stokes' Theorem. More precisely, one can check that 
\[
d= d_{s} + d_{c}
\]
where $d_s$ removes an edge and inserts a diagonal as in \eqref{equ:dongra} above, and $d_c$ contracts an edge
\[
d\, 
\begin{tikzpicture}[baseline=-.65ex]
\node[int] (v) at (0,0) {};
\node[int] (w) at (0.7,0) {};
\draw (v) edge +(-.5,0) edge +(-.5,.5) edge +(-.5,-.5) edge (w)
(w) edge +(.5,0) edge +(.5,.5) edge +(.5,-.5);
\end{tikzpicture}
=
\underbrace{
\begin{tikzpicture}[baseline=-.65ex]
\node[int] (v) at (0,0) {};
\node[int] (w) at (0,0) {};
\draw (v) edge +(-.5,0) edge +(-.5,.5) edge +(-.5,-.5) edge (w)
(w) edge +(.5,0) edge +(.5,.5) edge +(.5,-.5);
\end{tikzpicture}
}_{=:d_c}
+
\underbrace{
\sum_q
\begin{tikzpicture}[baseline=-.65ex]
\node[int] (v) at (0,0) {};
\node[int] (w) at (1,0) {};
\node (i1) at (0.3,.5) {$\scriptstyle e^q$};
\node (i2) at (0.7,-0.5) {$\scriptstyle e^{q*}$};
\draw (v) edge +(-.5,0) edge +(-.5,.5) edge +(-.5,-.5) 
(w) edge +(.5,0) edge +(.5,.5) edge +(.5,-.5);
\draw[dotted] (v) edge (i1) (w) edge (i2); 
\end{tikzpicture}
}_{=:d_s}
\]
The key point is now that if the piece $d_s$ potentially violates the connectivity condition above, because it can produce a new connected component without external vertices. In this case, by convention, the component is removed, and replaced by a number, obtained by applying \eqref{equ:fiberintR} to the component.
Hence the differential combinatorially depends on a map 
\[
Z : G_{H_g} \to \R,
\]
from the complex of connected graphs without external vertices to numbers. For physical reasons $Z$ is called partition function in \cite{CamposWillwacher2016}\footnote{In loc. cit. the graph complex $G_{H_g}$ is denoted $^*\mathrm{GC}_{H_g}$.}, and we shall follow this notation. We just emphasize that $Z$ is given by an integral
\begin{equation}\label{equ:Zintegral}
Z(\Gamma) = \int_{\FFM_{\Sigma_g}} A(\Gamma'),
\end{equation}
and hence generally transcendental and complicated to compute. 
If we want to emphasize the role of $Z$ in the definition of the differential of $\BVGraphs_{\Sigma_g}$, we will use the notation $\BVGraphs_{\Sigma_g}^Z$.
	
At the end, one can check that the resulting map $F: \BVGraphs_{\Sigma_g} \to \Omega_{PA}(\FFM_{\Sigma_g})$
is a quasi-isomorphism.
Furthermore, $\BVGraphs_{\Sigma_g}$ carries a natural coaction of a resolution $\BVGraphs_2$ of $\BVc$, and the map $F$ is compatible with these coactions.
Hence, the authors of \cite{CamposWillwacher2016} have built a real model of the configuration spaces of points on $\Sigma_g$.
There are however two question left open by \cite{CamposWillwacher2016}:
\begin{enumerate}
	\item The model depends on the partition function $Z$, which is not so easy to evaluate.
	\item The construction is performed over $\R$, not $\Q$.
\end{enumerate}
We settle both shortcomings in this paper.
In particular, we show that for the partition function one can take the following:
\begin{equation}\label{equ:Ztriv}
Z^{triv}(\Gamma)=
\begin{cases}
0 & \text{if $\Gamma$ has more than one vertex} \\
\int_{\Sigma_g} \alpha_1\cdots \alpha_s & \text{if $\Gamma$ has is one vertex decorated by $\alpha_1\cdots \alpha_s$} 
\end{cases}\,.
\end{equation}

	
	\subsection{Outline of proof of Theorem \ref{thm:main}}\label{sec:proof_outline}
	We will show Theorem \ref{thm:main} by establishing the following zigzag of quasi-isomorphisms of right $\BVc$-comodules
	\[
	\Mo_g
	\xleftarrow{\simeq}
	\BVGraphs_{\Sigma_g}^{Z_{triv}} 
	\xrightarrow{\simeq}
	\HH_g=
	\hotimes^{(1,1)\cdots (g,g)}_{\BVc_{C}}
	\BVc_{g,g}
    \xleftrightarrow[\text{Prop. }\ref{prop:thinmodel}]{\simeq}
    \Omega^{*}(\FFM_{\Sigma_{g}}).
	\]
	Here $\BVGraphs_{\Sigma_g}^{Z_{triv}}$ is the graphical right $\BVc$-comodule $\BVGraphs$ from before, but defined over $\Q$, and we emphasize that the partition function $Z$ entering the definition of the differential is taken to be \eqref{equ:Ztriv}.
	The left-hand map in the zigzag is then just the projection, sending all diagrams with internal vertices to zero. It already appeared in similar form in \cite[Section 6]{Idrissi2018b} and \cite[Appendix A]{CamposWillwacher2016} and is known to be a quasi-isomorphism.
	
	Also, we already know that the third object in the zigzag is a model for $\FFM_{\Sigma_g}$ from Proposition \ref{prop:thinmodel}.
	Hence it is enough to construct the second arrow.
    The formula defining that map will be formally very similar to \eqref{equ:fiberintR}, except that all integrals can be combinatorially evaluated. At worst, integrals of polynomials over simplices need to be computed explicitly.
	
	Concretely, the construction of the map $F:\BVGraphs_{\Sigma_g}\to \HH_g$ is in two steps, following the previous subsection.
	Let $\Gamma\in \BVGraphs_{\Sigma_g}(r)$ be a graph with $k$ internal vertices.
	Then we will first define an element 
	\[
	A(\Gamma) \in \HH_g'(r+k).
	\]
	We furthermore define a formal ``fiber integration'' map $\int: \HH_g'(r+k)\to \HH_g'(r)\subset \HH_g(r)$ that satisfies a version of the Stokes formula. In fact, this map is essentially combinatorial and defined over $\Q$, involving only integrals of polynomial forms over simplicies.
	Finally, our graph $\Gamma$ is sent to
	\[
	F(\Gamma)\coloneqq  \int A(\Gamma).
	\]
	The key point here is the following.
	Implicitly, $\BVGraphs_{\Sigma_g}$ still depends on a partition function $Z$ as before. However, with our formal integrals the expression \eqref{equ:Zintegral} can be fully evaluated and shown to agree with \eqref{equ:Ztriv}.

	\subsection{Some elements of $\BVc(1)$}
	\label{sec:cohom reps}
	We want to define a map of dg Hopf collections (and in fact $\BV^c$ comodules)
	\[
	A: \BVGra_{H_g} \to \HH_g',
	\]
	where $\BVGra_V$ is as recalled in section \ref{sec:recoll-citec-anoth}, but considered over $\Q$ instead of $\R$, and $H_g=H^*(\Sigma_g)$.
	To this end we want to adapt \eqref{equ:AformulaR}.
	Hence we need to pick suitable representatives of the cohomology $H_g$ of $\Sigma_g$ in $\HH_g(1)$, and a propagator.
	In this section we shall do the former.
	To this end we consider a fixed basis of $H_g$,
	\[
	1, a^1,\dots,a^g, b^1,\dots,a^g ,\nu,
	\]
	defined such that the $a^j$ and $b^j$ have degree 1, $\nu$ has degree 2, and the Poincar\'e duality pairing satisfies
	\[
	1=\int_{\Sigma_g} a^j b^j = \int_{\Sigma_g}  \nu,
	\]
	with all other pairings zero.
	The goal is then to define corresponding elements $a^1,\dots,a^g, b^1,\dots,a^g ,\nu\in \HH_g(1)$, abusively denoted by the same letters.
	To define these elements, we will use the notation and geometric language of section \ref{sec:HHg_notation}.
	\begin{itemize}
		\item We define the degree 1 element
		\begin{equation}\label{equ:ajdef}
		a^j = e dt_j \in \HH_g'(1)
		\end{equation}
	    where 
		\[
		e=1
		\in \BVc_C(1)
		\]
		is the unit element
		and $t_j$ is the longitudinal coordinate along the $j$-th handle.
		The form $a^j$ supported on the $j$-th handle, i.e., on all other strata the form is zero.
		It is clearly continuous, since the restriction to the boundary ($t_j=0$ or $t_j=1$) vanishes.
		
		In fact, later on we will omit units like $e$ above from formulas, if no confusion can arise.
		\item We define the degree 1 element 
		\[
		b^j \in  \HH_g'(1)
		\]
		as follows:
		\begin{itemize}
			\item On the $j$-th handle we set $b^j=\omega_{1*}\in \BVc_C(1)$,
			using the notation of section  \ref{sec:BVcnotation} for elements of $\BVc_C(r)$.
			\item On the other handles we set $b^j=0$.
			\item On the bulk stratum we set
			\[
			b^j=
			\begin{cases}
			\omega_{1\underline 1} & \text{for $j=1$} \\
			\omega_{1\underline j} - \omega_{1\overline j} & \text{for $j\geq 2$} 
			\end{cases}
			\in \BVc_{(g,g)}(1),
			\]
			using the notation of section  \ref{sec:BVcnotation} for elements of $\BVc_{(g,g)}(r)$.
		\end{itemize}
		One can check that this is continuous.
		For $j=1$. The only nontrivial cocompositions of $\omega_{1\underline 1}$ are when the point $1$ approaches the fixed vertex $\underline 1$, and $\infty$.
		These precisely produce the two boundary values of $b^1$ on the first handle.
		
		For $j\geq 2$ the non-trivial cocompositions are when the point approaches either the fixed vertex $2i-2$ or $2j-1$.
		They produce the two boundary values of $b^j$ on the $j$-th handle. Here the additional sign comes in by applying the involution \eqref{equ:IBVc1} on the $t_j=1$-end of the handle.
		Also note that there is no contribution of our point moving to $\infty$ in the bulk stratum, since the two terms in $b^j$ cancel each other.
	\item We finally define the degree 2 element $\nu= a^1 b^1$, which will play the role of a volume form on $\Sigma_g$.
	\end{itemize}

Note that none of the above forms involved the framings.
Since we are working with framed configuration spaces, the Euler class $2\nu - 2\sum_{k=1}^ga^k b^k$, which is cohomologous to $(2-2g)\nu$, has a primitive $\eta\in \HH_g(1)$, given by a fiberwise volume form.
Concretely, we define this form as follows:
\begin{itemize}
	\item On the bulk stratum we set
	\[
	\eta = \theta_1 - \omega_{1\underline 1} \in \BVc_{(g,g)}(1).
	\]
	\item On the first handle we set 
	\[
	\eta=\theta_1-\omega_{1*} \in \BVc(1).
	\]
	\item On the $j$-th handle, with $j\geq 2$, we set 
	\[
	\eta = \theta_1 -2t_j \omega_{*1},
	\]
	where $t_j$ is again the longitudinal coordinate along the $j$-th handle.
\end{itemize}
We note that $\eta$ is continuous, as is easily checked.
For example, if one passes from to the $t_j=1$-end of the $j$-th handle the form becomes $\theta_1 -2\omega_{*1}$. This has to be compared with the corresponding cocomposition of the bulk term, which is $\theta_1$. However, we need to apply the involution \eqref{equ:IBVc1}, and hence obtain that both expressions agree, as desired.

One also checks easily that 
\[
d\eta = 2\nu - 2\sum_{k=1}^ga^k b^k
\]
as desired. In particular, note that both the right- and the left-hand sides are supported on the handles $2,\dots, g$.

Furthermore, the definition above is such that $\eta$ is the boundary-value of the propagator $\omega\in \HH_g'(2)$ as the two points collide, with $\omega$ to be defined in the next section.

	\subsection{The propagator}
	
	We shall again use the notation of section \ref{sec:HHg_notation} to define elements of $\HH_g'(2)$.
	In particular we shall construct now the propagator $\omega\in \HH_g'(2)$.
	We have to specify $\omega$ on the 14 types of ``strata'' of our space, as described in section \ref{sec:HHg12_example_strata}.
	\begin{itemize}
		\item (Stratum $A$) If both point 1 and 2 are in the bulk, we set
		\begin{equation}\label{equ:omega_A}
		\omega = \omega_{12} -\frac 1 2 (\omega_{1\underline 1}+\omega_{2\underline 1}) \in \BVc_{(g,g)},
		\end{equation}
		using again the notation of section \ref{sec:BVcnotation}.
		\item (Stratum $B_2$) If point 1 is in the bulk and point 2 on the first handle (with longitudinal coordinate $t_1^{(2)}$, see Figure \ref{fig:quenches}) the form is 
		\begin{equation}\label{equ:omega_B2}
		\omega = (\omega_{1\underline 1}) (\frac 1 2-t_1^{(2)}) + (\omega_{2*}) (-\frac 1 2 +t_1^{(2)} )
		\in \BVc_{(g,g)}(\{1\})\otimes \BVc_1(\{2\})\otimes \Omega_{poly}(\Delta^1).
		\end{equation}		
		
		\item (Stratum $C_2$) If instead point 2 is on the $j$-th handle, $j=2,\dots g$, then the form is  
		\begin{equation}\label{equ:omega_C2}
		(\omega_{1\underline j})(1-t_j^{(2)})
		+
		(\omega_{1\overline j})( t_j^{(2)})
		-\frac 1 2 (\omega_{1\underline 1}).
		\end{equation}		
		\item (Strata $C_1C_2$) If one point is on the $i$-th handle and the other on the $j$-th, with $j\neq i$ and $i,j\geq 2$, we set $\omega$ to be zero.
		
		\item (Strata $B_1C_2$) If point 1 is on the first handle and point 2 on the $j$-th, with $j\geq 2$, we set 
		\begin{equation}\label{equ:omega_B1C2}
		\omega=(-\frac 1 2 +t_1^{(1)})\omega_{1*}.
		\end{equation}

		\item (Stratum $B_{(12)}$) If both points are together on the first handle and infinitesimally close, then 
		\begin{equation}\label{equ:omega_B_12}
		\omega = \omega_{12} -\frac 1 2(\omega_{1*} +\omega_{2*}).
		\end{equation}
		\item (Stratum $C_{(12)}$) If both are again infinitesimally close and on the $j$-th handle, $j\geq 2$, then $\omega$ is
		\begin{equation}\label{equ:omega_C_12}
		\omega_{12} - t_j (\omega_{1*}+\omega_{2*}).
		\end{equation}
		Note that here $t_j^{(1)}=t_j^{(2)}\eqqcolon t_j$.
		
		\item (Stratum $B_{12}$) If both points are together on the first handle, with coordinates $t_1^{(1)}<t_1^{(2)}$, but not infinitesimally close, then the form is 
		\begin{equation}\label{equ:omega_B12}
		(\pi_1^* f) (\frac 1 2+ t_1^{(1)}- t_1^{(2)})
		+
		(\pi_2^* f)(-\frac 1 2 + t_1^{(2)}- t_1^{(1)})
		\end{equation}
		
		\item (Stratum $C_{12}$) If both points are together on the $j$-th handle ($j\geq 2$), with coordinates $t_j^{(1)}<t_j^{(2)}$, but not infinitesimally close, then the form is 
		\begin{equation}\label{equ:omega_C12}
		(\omega_{1*} ) (1- t_j^{(2)})
		+
		(\omega_{2*}) (- t_j^{(1)})
		\end{equation}
		
		\item (Strata $B_1$, $C_1$, $B_{21}$, $C_{21}$, $B_2C_1$) We define $\omega$ to be symmetric under the $S_2$-action, so that the remaining cases are determined. For example, if point 1 is on the $j$-th handle and point 2 on the first (strata $B_2C_1$) we set, symmetrically to \eqref{equ:omega_B1C2},
		\begin{equation}\label{equ:omega_B2C1}
		\omega=(\frac 1 2 -t_1^{(2)})\omega_{2*}.
		\end{equation}
	\end{itemize}
	
	\begin{lemma}\label{lem:prop}
		The degree $1$ element $\omega\in \HH_g(2)$ is well defined, symmetric under the $S_2$-action and satisfies the equation
		\begin{equation}\label{equ:domega}
		d\omega = \pi_1^*\nu+\pi_2^*\nu - \sum_{k=1}^g (\pi_1^* a^k \pi_2^*b^k+\pi_2^* a^k \pi_1^*b^k).
		\end{equation}
	\end{lemma}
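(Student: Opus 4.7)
My plan is to verify the three assertions of the lemma---well-definedness, $S_{2}$-symmetry, and the formula for $d\omega$---by a direct stratum-by-stratum analysis using the formalism of Section~\ref{sec:HHg_notation}. For each of the fourteen strata enumerated in Section~\ref{sec:HHg12_example_strata}, an explicit local formula for $\omega$ has been given (or is prescribed by symmetry), so the task reduces to a finite number of local computations inside the dgcas $\BVc_{(g,g)}(S)\otimes \bigotimes_{j} \BVc_C(T_{j})\otimes \Omega_{poly}(\Delta^{\bullet})$, using the explicit cocomposition and involution formulas from Section~\ref{sec:BVcnotation} and \eqref{equ:IBVc1}.

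For well-definedness, I would check the continuity conditions \eqref{equ:HHgcontinuity1}, \eqref{equ:HHgcontinuity2}, \eqref{equ:HHgcontinuity3} along every edge of the adjacency diagram in Figure~\ref{fig:strata}. The edges of type ``two packets on a handle merge'' (such as $B_{12}\,\text{--}\,B_{(12)}$ and $C_{12}\,\text{--}\,C_{(12)}$) are handled by \eqref{equ:HHgcontinuity1} via the $\BVc_C$ product; for instance, on $B_{12}\to B_{(12)}$ one lets $t_{1}^{(1)}=t_{1}^{(2)}$ in \eqref{equ:omega_B12} and uses the cocomposition $\omega_{1*}\mapsto \omega_{1*}+\omega_{2*}$ to recover \eqref{equ:omega_B_12}. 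The edges where a packet reaches $t=0$ (such as $B_{12}\,\text{--}\,B_{1}$, $B_{1}\,\text{--}\,A$, $C_{2}\,\text{--}\,A$) invoke \eqref{equ:HHgcontinuity2} with the $\underline{j}$-cocomposition of $\BVc_{(g,g)}$; for example, the restriction of \eqref{equ:omega_C2} to $t_{j}^{(2)}=0$ gives $\omega_{1\underline{j}}-\tfrac12 \omega_{1\underline{1}}$, which agrees with the cocomposition of \eqref{equ:omega_A} attaching the $j$-th packet to $\underline{j}$. The edges where a packet reaches $t=1$ (such as $B_{21}\,\text{--}\,B_{2}$, $C_{21}\,\text{--}\,C_{2}$) use \eqref{equ:HHgcontinuity3} together with the involution \eqref{equ:IBVc1}: the asymmetric shifts ``$\pm\tfrac12$'' in the handle formulas and the sign differences between ``$\omega_{1\underline{j}}$'' and ``$\omega_{1\overline{j}}$'' in \eqref{equ:omega_C2} are tuned precisely to absorb the $-\omega_{*i}$ and $-2\omega_{*i}$ corrections of \eqref{equ:IBVc1}.

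For the $S_{2}$-symmetry, the last bullet of the construction defines the strata $B_{1}$, $C_{1}$, $B_{21}$, $C_{21}$, $B_{2}C_{1}$ as the $(12)$-images of their counterparts, so only the manifestly symmetric strata $A$, $B_{(12)}$, $C_{(12)}$, $C_{1}C_{2}$ need be inspected, which is immediate from their formulas. For the differential identity \eqref{equ:domega}, since the internal differentials on $\BVc_C$ and $\BVc_{(g,g)}$ are zero, $d\omega$ on each stratum is obtained purely from the exterior derivative in $\Omega_{poly}(\Delta^{\bullet})$; hence $d\omega$ vanishes on the bulk stratum $A$ and on the ``infinitesimally close'' strata $B_{(12)}$, $C_{(12)}$. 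I would compute the result on each remaining stratum using the Koszul sign rule, and compare with the right-hand side of \eqref{equ:domega}, noting that the representatives $a^{k}=e\,dt_{k}$ and $\nu=a^{1}b^{1}$ are supported only on the corresponding handles, so the only nonzero contributions to the RHS on a given stratum come from the handle(s) hosting the points. A typical check is on $B_{2}$: $d\omega=\pm(\omega_{1\underline{1}}-\omega_{2*})\,dt_{1}^{(2)}$, while the RHS collapses to $\pi_{2}^{*}\nu-\pi_{2}^{*}a^{1}\cdot \pi_{1}^{*}b^{1}$, which is exactly the same expression once one expands $\pi_{2}^{*}\nu=\pm \omega_{2*}\,dt_{1}^{(2)}$ and $\pi_{1}^{*}b^{1}=\omega_{1\underline{1}}$.

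The main obstacle is pure bookkeeping: the Koszul signs in the products of degree-1 generators with $dt$-factors, and the involution-twist \eqref{equ:IBVc1} at every $t=1$ end of a handle, must be treated with extreme care. The asymmetry between the first handle and the handles $j\ge 2$ in the definitions of $b^{j}$, $\eta$, and $\omega$ is precisely engineered to make the twisted continuity conditions hold on the nose, and teasing this apart edge-by-edge in Figure~\ref{fig:strata} is where almost all of the work lies. Once the pattern is established on a few representative edges---say $B_{12}\,\text{--}\,B_{(12)}$, $B_{1}\,\text{--}\,A$, $B_{21}\,\text{--}\,B_{2}$, and their handle-$j\ge 2$ analogues---the remaining cases reduce to mechanical transcription.
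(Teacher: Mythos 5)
Your plan is exactly the paper's proof: symmetry holds by construction, well-definedness is checked edge-by-edge on the adjacency diagram of Figure~\ref{fig:strata} via the continuity conditions \eqref{equ:HHgcontinuity1}--\eqref{equ:HHgcontinuity3} (with the involution \eqref{equ:IBVc1} at the $t=1$ ends), and \eqref{equ:domega} is verified stratum by stratum using that the internal differentials of $\BVc_C$ and $\BVc_{(g,g)}$ vanish.

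One concrete error: you assert that $d\omega$ vanishes on both ``infinitesimally close'' strata $B_{(12)}$ and $C_{(12)}$. This is true on $B_{(12)}$, where \eqref{equ:omega_B_12} has constant coefficients, but false on $C_{(12)}$: there $\omega = \omega_{12} - t_j(\omega_{1*}+\omega_{2*})$ depends on the simplex coordinate $t_j$, so $d\omega = -dt_j\,(\omega_{1*}+\omega_{2*}) = -\pi_2^*a^j\,\pi_1^*b^j - \pi_1^*a^j\,\pi_2^*b^j \neq 0$. Correspondingly, the right-hand side of \eqref{equ:domega} is also nonzero on $C_{(12)}$ (both points sit on the $j$-th handle, so $\pi_1^*a^j$, $\pi_2^*a^j$, $\pi_1^*b^j$, $\pi_2^*b^j$ all survive), and the two nonzero expressions agree. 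The identity therefore still holds on that stratum, but not for the reason you give; had you carried out the check believing both sides vanish there, you would have skipped a genuinely nontrivial verification. The rest of the proposal (including the sample computation on $B_2$) is correct.
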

	\begin{proof}
	The element is symmetric under the interchange of point 1 and 2 by definition. We next need to check that it is well-defined as an element of the totalization $\HH_g(2)$. In the geometric language of section \ref{sec:HHg_notation} this means that have to check that the values of $\omega$ we gave on each of the 14 types of top-(i.e., 4-)dimensional strata are continuous across strata boundaries.
	More precisely, for each pair of strata in Figure \ref{fig:strata} connected by a line in the diagram, we have to check that the two restrictions of the forms on the 4 dimensional pieces to their 3-dimensional intersection are the same.
	In principle we need to check 24 cases. Fortunately, some of them are covered by the symmetry under interchange of point 1 and 2, or are otherwise trivial.
	\begin{itemize}
		\item Neighbors of stratum $A$:
		Stratum $A$ has 6 neighbors, as can be seen from the diagram of Figure \ref{fig:strata}. First, one of the points, say the 2nd, can move to one side of one handle. Hence one needs to take the corresponding cooperadic cocomposition of \eqref{equ:omega_A}, and check that it agrees with the evaluation of \eqref{equ:omega_B2} (respectively \eqref{equ:omega_C2}) at $t_j^{(2)}=0$ or $t_j^{(2)}=1$. This is easily accomplished and handles the four diagonal neighbors of $A$ in the diagram of Figure \ref{fig:strata}.
		For the two horizontal neighbors both points together approach one handle. Again one takes the corresponding cocomposition of \eqref{equ:omega_A}, and compares to the evaluation of \eqref{equ:omega_B_12} (respectively \eqref{equ:omega_C_12}) at $t_j=0$ and $t_j=1$.
		We note that in particular, on the $j$-th handle with $j\geq 2$, at $t_j=1$ one needs to apply the involution \eqref{equ:IBVc1} to \eqref{equ:omega_C_12}, and this explains the presence of the second summand in the case $t_j=1$.
		\item Next we discuss the neighbors of $B_{12}$.
		Concretely, this means we have two points on the first handle, with longitudinal coordinates $t_1^{(1)}\leq t_1^{(2)}$.
		There are three boundary strata, corresponding to $t_1^{(1)}\to 0$, $t_1^{(2)}\to 1$ and $t_1^{(1)}\to t_1^{(2)}$.
		 We hence need to compare the evaluations of \eqref{equ:omega_B12} at these boundary points 
		 \begin{align*}
		 &(\omega_{1*} ) (\frac 1 2- t_1^{(2)})
		 +
		 (\omega_{2*})(-\frac 1 2+ t_1^{(2)})
		 \\
		 &
		 		 (\omega_{1*}) (-\frac 1 2+t_1^{(1)})
		 +
		 (\omega_{2*})( \frac 1 2- t_1^{(1)})
		 \\
		 &
		 		 \frac 1 2 \omega_{1*} -\frac 1 2 \omega_{2*} 
		 \end{align*}
		 to the corresponding cocomposition of \eqref{equ:omega_B2}, or the corresponding cocomposition of the same element with $1$ and $2$ interchanged, or to the cocomposition of \eqref{equ:omega_B_12}, respectively.
		 The expressions agree.
		 \item By symmetry we have hence covered the three intersections of stratum $B_{21}$ with its neighbors as well.
		 \item The analysis for stratum $C_{12}$ and its three neighbors is similar. Here the boundary values of \eqref{equ:omega_C12} at $t_j^{(1)}=0$, $t_j^{(2)}=1$ and $t_j^{(1)}=t_j^{(2)}=t_j$ are
		  		 \begin{align*}
		  &(\omega_{1*}) (1- t_j^{(2)})
		  &
		  &
		  (\omega_{2*})( - t_1^{(1)})
		  &
		  &
		  (\omega_{1*})(1-t_j)+(\omega_{2*})( - t_j)
		   \,.
		  \end{align*}
		  This is to be compared the corresponding cocompositions 
		  of \eqref{equ:omega_C2}, the same element with 1 and 2 interchanged and \eqref{equ:omega_C_12}.
		  For example, cocomposing \eqref{equ:omega_C2}, its second and third summands become zero, since here point 1 ``approaches'' the marked point $\underline j$, away from the fixed vertices $\overline j$ and $\underline 1$. We hence see that the expression agrees with the first of the three above.
		  If instead point 1 approaches the marked point $\overline j$ the first and third summands become zero, and if we interchange 1 and 2 we obtain $(\omega_{2*})( t_1^{(1)})$. Applying the involution of \eqref{equ:IBVc1} we then obtain the second expression of the three above, as desired.
		  The third case is checked similarly.
		  \item By symmetry, we have also handled the stratum $C_{21}$, and its three neighbors.
		  \item Next look at stratum $B_2C_1$, where we have defined $\omega$ as in \eqref{equ:omega_B2C1}.
		  The boundary values at $t_j^{(1)}=0$ and $t_j^{(1)}=1$ are given by the same formula.
		  We need to compare this with the cocomposition of \eqref{equ:omega_B2} such that point 1 approaches the marked points $\underline j$ or $\overline j$ with $j=2,\dots,g$. The first summand in \eqref{equ:omega_B2} then becomes zero, and the second is just \eqref{equ:omega_B2C1}
		  
		  Similarly one considers the intersection of $B_1C_2$ with $C_2$.
		  To this end we start with \eqref{equ:omega_C2}. We first consider the cocomposition obtained by letting point 1 approach the fixed vertex 1. The result is $-\frac 1 2 \omega_{1*}$.
		  This agrees with the evaulation of \eqref{equ:omega_B1C2} at $t_1^(1)=0$ as desired.
		  Next we look at \eqref{equ:omega_C2} and let vertex 1 go to $\infty$ (i.e., take the corresponding cocomposition).
		  This yields
		  \[
		  (1-t_j^{(2)})(\omega_{1*}) +t_j^{(2)}(\omega_{1*})-\frac 1 2 (\omega_{1*})
		  =
		  \frac 1 2 (\omega_{1*}).
		  \]
		  This agrees with the evaluation of \eqref{equ:omega_B1C2} at $t_1^{(1)}=1$ as desired.
		  \item Finally, consider the strata $C_1C_2$, with point 1 on the $i$-th and point 2 on the $j$-th handle, with, $i\neq j$, $i,j\geq 2$.
		  Here the form $\omega$ is zero, so we need to check that the corresponding cocompositions of $\eqref{equ:omega_C2}$ are as well. Indeed, when point 1 approaches the marked points $\underline i$ or $\overline i$, all terms become zero, as desired.
	\end{itemize}
	Overall, we have checked that $\omega\in \HH_g'(2)$ is a well-defined element.
	Let us compute its differential. Again we have to do this separately on each of the strata, and verify in each case that \eqref{equ:domega} holds. 
	\begin{itemize}
		\item On stratum $A$ the differential is zero, as is the right-hand side of \eqref{equ:domega}.
		\item On $B_2$ the differential is (cf. \eqref{equ:omega_B2})
		\[
		(dt_1^{(2)})(\omega_{2*})  - (dt_1^{(2)})(\omega_{1\underline 1})
		\]
		The first term is the restriction of $\pi_2^*(a^1 b^1)=\pi_2^*\nu$ to the first handle, and the second one is $- \pi_2^*a^1\pi_1^*b^1$.
		Note that $\pi_1^* a^1$ and hence also $\pi_1^* \nu$ vanish on the first handle.
		By symmetry, \eqref{equ:domega} also holds on the stratum $B_1$.
		\item On the stratum $C_2$ (i.e., point 2 is in the $j$-th handle with $j\geq 2$) one obtains from \eqref{equ:omega_C2} that $d\omega$ is 
		\[
		(dt_j^{(2)} )(-\omega_{1\underline j}+\omega_{1\overline j}) .
		\]
		This is just the restriction of $-\pi_2^* a^j \pi_1^*b^j $ to this stratum. The other terms on the right-hand side of \eqref{equ:domega} all restrict to zero on $C_2$, so that \eqref{equ:domega} holds there.
		By symmetry, \eqref{equ:domega} also holds on the stratum $C_1$.
		\item On $B_{12}$ we have that (cf. \eqref{equ:omega_B12})
		\[
		d\omega = (\pi_1^*a^1 -\pi_2^*a^1)(\pi_1^*b^1-\pi_2^*b^1) 
		\]
		In agreement with \eqref{equ:domega}.
		By symmetry \eqref{equ:domega} also holds on $B_{21}$.
		\item On $C_{12}$ we have (cf. \eqref{equ:omega_C12})
		\[
		d\omega = -\pi_2^*a^j \pi_1^*b^j - \pi_1^*a^j\pi_2^*b^j ,
		\]
		in agreement with \eqref{equ:domega}, which hence also holds on $C_{21}$ by symmetry.
		\item On $B_{(12)}$ we have  (cf. \eqref{equ:omega_B_12})
		\[
		d\omega = 0,
		\]
		and this agrees with the right-hand side of \eqref{equ:domega}, which reads on this stratum
		\[
		(\pi_1^*a^1 -\pi_2^*a^1) (\pi_1^*b^1-\pi_2^*b^1) =0,
		\]
		since here $\pi_1^*a^1 =\pi_2^*a^1$.
		\item On $C_{(12)}$ we have  (cf. \eqref{equ:omega_C_12})
		\[
		d\omega = -\pi_2^*a^j \pi_1^* b^j -\pi_1^*a^j\pi_2^*b^j,
		\]
		as desired.
		\item On $B_1C_2$ we obtain (cf. \eqref{equ:omega_B1C2}) we have
		\[
		d\omega = dt_1^{(1)}\omega_{1*} = \pi_1^*\nu.
		\] 
		This agrees with the right-hand side of \eqref{equ:domega}.
		The case $B_2C_1$ follows by symmetry.
      \item On the strata of type $C_1C_2$ we have $\omega=0$, as is the right-hand side of \eqref{equ:domega}.
        \qedhere
	\end{itemize}
	\end{proof}

	For later reference we shall also note that the $\BVc$ coaction $\Delta:\HH_g'(2)\to\HH_g'(1)\otimes \BVc(2)$ applied to $\omega$ produces
	\begin{equation}\label{equ:delta_omega_eta}
	\Delta \omega = 
	\eta \otimes 1 + 1\otimes \omega_{12},
	\end{equation}
	as can be easily seen from the definitions.
	In fact, $\eta$ is defined precisely to make this equation hold.
	
	Now let us return to the definition of our map $A:\BVGra_{H_g}(r)\to \HH_g'(r)$. It is sufficient to describe the map on commutative algebra generators. 
	If we denote, as in section \ref{sec:recoll-citec-anoth}, an edge in $\BVGra_{H_g}$ between vertices $i$ and $j$ by $\omega_{ij}$ and a decoration $\alpha\in \bar H_g$ at vertex $j$ by $\alpha_j$, then our map $A$ is defined on generators as
	\begin{align}\label{equ:feynman_rules}
	a_j^k &\mapsto \pi_j^* a_k &
	b_j^k &\mapsto \pi_j^* b_k &
	\nu_j&\mapsto \pi_j^* \nu &
	\omega_{ij} &\mapsto \pi_{ij}^* \omega &
	\omega_{jj} &\mapsto \pi_j^* \eta .
	\end{align}
	
	From the discussion above the following is then obvious.
	\begin{lemma}\label{lem:def-A}
		This prescription yields a well defined map of collections of dgcas
		\[
		A : \Gra_{H_g} \to \HH'_g.
		\]
	\end{lemma}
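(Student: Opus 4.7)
The plan is to exploit that $\BVGra_{H_g}(r)$ is free as a graded commutative algebra on the generators $\omega_{ij}$, $\omega_{jj}$, $a^k_j$, $b^k_j$, $\nu_j$. Consequently, the assignment \eqref{equ:feynman_rules} extends uniquely to a multiplicative morphism of collections of graded algebras, and the task reduces to checking two points: (a) that the prescribed values actually lie in $\HH_g'(r)$ with the correct degrees, and (b) that the resulting extension commutes with the differential on each generator.

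For (a), all the ingredients have already been built. The elements $a^k, b^k, \nu, \eta \in \HH_g'(1)$ were constructed stratumwise in Section \ref{sec:cohom reps} and shown there to satisfy the continuity conditions \eqref{equ:HHgcontinuity1}--\eqref{equ:HHgcontinuity3}, and the propagator $\omega \in \HH_g'(2)$ was constructed in Lemma \ref{lem:prop}. The forgetful pullbacks $\pi_j^*$ and $\pi_{ij}^*$ of \eqref{equ:piHHg} are induced by the coaction of the nullary $\BVc$-cogenerator and are therefore morphisms of collections of dgcas, so applying them produces honest elements of $\HH_g'(r)$ of the stated degrees.

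For (b) I would check each type of generator separately. The decoration generators $a^k_j$, $b^k_j$, $\nu_j$ are cocycles in $\BVGra_{H_g}$, so I would verify directly from the stratumwise formulas of Section \ref{sec:cohom reps} that $\pi_j^* a^k$, $\pi_j^* b^k$, $\pi_j^* \nu$ are closed in $\HH_g'(1)$. For an edge generator $\omega_{ij}$ with $i \neq j$, the identity $\sum_q e^q_i e^{q*}_j$, expanded in the basis $\{1, a^k, b^k, \nu\}$ with the Poincar\'e dual signs fixed by $\int a^k b^k = \int \nu = 1$, becomes $\nu_i + \nu_j - \sum_k (a^k_i b^k_j + b^k_i a^k_j)$, and this is precisely $\pi_{ij}^*$ of the right-hand side of \eqref{equ:domega}; the equality $A(d\omega_{ij}) = d\pi_{ij}^*\omega$ is then immediate from Lemma \ref{lem:prop}. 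For a tadpole $\omega_{jj}$, the analogous identity $A(d\omega_{jj}) = d\pi_j^*\eta$ reduces, after expansion in the basis, to the chain-level formula $d\eta = 2\nu - 2\sum_k a^k b^k$ established at the end of Section \ref{sec:cohom reps}.

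The main obstacle --- really the only nontrivial content of this lemma --- is the continuity and differential computation for the propagator $\omega$ carried out in Lemma \ref{lem:prop}, together with the parallel computation of $d\eta$. Once those are in hand, the present statement is a bookkeeping step that recognizes the formulas \eqref{equ:feynman_rules} as those of a dgca morphism out of the free dgca $\BVGra_{H_g}$.
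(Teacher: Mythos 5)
Your proposal is correct and follows exactly the route the paper intends: the paper's own "proof" is simply the remark that the lemma is obvious from the preceding discussion, i.e.\ from the quasi-freeness of $\BVGra_{H_g}$ as a graded commutative algebra together with the stratumwise constructions of $a^k$, $b^k$, $\nu$, $\eta$ in Section \ref{sec:cohom reps} and the propagator computation of Lemma \ref{lem:prop}. Your explicit verification of the differential on generators (including the expansion of $\sum_q e^q e^{q*}$ into the diagonal class and the Euler class) is exactly the bookkeeping the paper leaves implicit.
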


	\subsection{Combinatorial fiber integrals on $\HH_g$}
	\subsubsection{Construction of the fiber integral}
	\label{sec:fibintdef}

	We next define morphisms
	\[
	\int_k : \HH_g'(r+k) \to \HH_g'(r),
	\]
	which we shall interpret as a combinatorial version of a fiber integral, and which satisfy a version of the Stokes' Theorem, see Proposition \ref{prop:stokes} below.
	
	Concretely, this will be defined by the following formula.
	Given $\alpha\in \HH_g'(r+k)$ we need to define $\int_k\alpha$. This will again be an element in the end. 
	We shall again use the notation of section \ref{sec:HHg_notation} to define this element.
	Recall that we need to define $\int_k\alpha$ on every ``stratum'', 
	which means that (see \eqref{equ:HHgstrata}) for every pair $\sS\coloneqq (\underline r, f)$ with $f: S=\{1,\dots,r\}\to B(\underline r)$ we need to provide an element 
	\[
	\left(\int_k\alpha\right)_{\sS}\in 
	\BVc_{(g,g)}(f^{-1}(0)) \otimes \bigotimes_{i=1}^n\bigotimes_{j=1}^{r_j} \BVc_1(f^{-1}((i,j))) \otimes \Omega_{poly}(\Delta^{r_1}\times \cdots \times \Delta^{r_g}), 
	\]
	and we need to check that the collection of such elements specified 
	satisfies suitable boundary conditions so as to assemble to an element in the end $\HH_g'(S)$.
	
	Now let $K=\{r+1,\dots,r+k\}$, so that $\alpha\in \HH_g'(S\cup K)$.
	We will consider strata $\sS'=(\underline r', f')$ for $r+k$ points that satisfy the following conditions:
	\begin{itemize}
		\item Forgetting the last $k$ points we end up in stratum $\sS$, where we define the forgetful map on $\sS'$ in the obvious way.
		\item Each of the $k$ last points is on a handle (i.e., not in the bulk), and is the only element of its group on this handle. More precisely,this means that we have $f'(i)=(p,q)$ with $(f')^{-1}((p,q)) =\{i\}$ for all $i=r+1,\dots, r+k$.,
	\end{itemize}
	For concreteness we have that 
	\[
	\underline r =(r_1+k_1,\dots, r_g+k_g), 
	\]
	where the conditions above imply that $k_1+\cdots+k_g=k$.
	Then the value of our given form $\alpha\in \HH_g'(S\cup K)$ 
	on the stratum $\sS'$ as above is
	\[
	\alpha_{\sS'}\in 
	\BVc_{(g,g)}(f^{-1}(0)) 
	\otimes \bigotimes_{i=1}^n\bigotimes_{j=1}^{r_j} \BVc_C(f^{-1}((i,j)))
	\otimes
	\underbrace{  
	\BVc_C(1)^{\otimes k}
	}_{\text{from $K$}} \otimes \Omega_{poly}(\Delta^{r_1+k_1}\times \cdots \times \Delta^{r_g+k_g}). 
	\] 
	Now note that we have the degree -1 map
	\begin{equation}\label{equ:pBVQdef}
	\p : \BVc_C(1) \to \Q
	\end{equation}
	projecting onto the ``Lie cobracket'' cogenerator.
	Furthermore, the forgetful map, forgetting the last $k$ points, induces a map 
	\[
	\pi_\Delta: \Delta^{r_1+k_1}\times \cdots \times  \Delta^{r_g+k_g} 
	\to \Delta^{r_1}\times \cdots \times  \Delta^{r_g}.
	\] 
	Since polynomial forms are preserved under pushforwards along simplicial maps, we can consider the fiber integral
	\begin{equation}\label{equ:intpiDeltadef}
	\int_{\pi_\Delta}=(\pi_\Delta)_{!}:  \Omega_{poly}(\Delta^{r_1+k_1}\times \cdots \times \Delta^{r_g+k_g} )
	\to  \Omega_{poly}(\Delta^{r_1}\times \cdots \times \Delta^{r_g}).
	\end{equation}
	Note that this integral is defined over $\Q$ and essentially combinatorial.
	We thus define the map 
	\begin{multline*}
	\int_{\sS'\to \sS}:
		\BVc_{(g,g)}(f^{-1}(0)) 
	\otimes \bigotimes_{i=1}^n\bigotimes_{j=1}^{r_j} \BVc_C(f^{-1}((i,j)))
	\otimes 
	\underbrace{  
		\BVc_C(1)^{\otimes k}
	}_{\text{from $K$}} \otimes \Omega_{poly}(\Delta^{r_1+k_1}\times \cdots \times  \Delta^{r_g+k_g})
\\\to 
\BVc_{(g,g)}(f^{-1}(0)) \otimes \bigotimes_{i=1}^n\bigotimes_{j=1}^{r_j} \BVc_C(f^{-1}((i,j))) \otimes \Omega_{poly}(\Delta^{r_1}\times \cdots \times  \Delta^{r_g})
	\end{multline*}
	to be given by $\int_{\sS'\to \sS}=\mathit{id}\otimes (\p)^{\otimes k}\otimes \int_{\pi_\Delta}$. 
	In words, it is the identity on the first few factors, the factors ``from $K$'' are projected to $\Q$ via $\p$, and on the remaining factors we apply the fiber integral on simplices.

	Finally, we define 
	\begin{equation}\label{equ:int_def_sector}
	\left(\int_k\alpha\right)_{\sS}
	=
	\sum_{\sS' \supset \sS} \int_{\sS'\to \sS} \alpha_{\sS'},
	\end{equation}
	where the sum is over all strata $\sS'$ over $\sS$ as above.

	\begin{lemma}
		For each $\alpha \in\HH_g'(S\cup K)$ the element $\int_k\alpha\in \HH_g'(S)$ is well defined, i.e., continuous in the language of section \ref{sec:HHg_notation}. 
	\end{lemma}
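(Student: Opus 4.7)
The plan is to verify directly the three continuity conditions of Section \ref{sec:HHg_notation}, namely \eqref{equ:HHgcontinuity1} (merging two packets on the same handle) and \eqref{equ:HHgcontinuity2}, \eqref{equ:HHgcontinuity3} (a packet hitting either end of a handle). By the definition \eqref{equ:int_def_sector}, both sides of each continuity equation decompose as finite sums over strata $\sS'\supset\sS$ (resp.\ $\tilde\sS'\supset\tilde\sS$) of fiber integrals of pieces of $\alpha$. The strategy is to match these sums term-by-term using (i) base change for fiber integrals along the simplicial projections $\pi_\Delta$, and (ii) the continuity conditions for $\alpha$ itself.

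Fix for definiteness adjacent strata $\sS=(\underline r, f)$ and $\tilde\sS=(\tilde{\underline r},\tilde f)$ as in \eqref{equ:HHgcontinuity1}, so $\tilde\sS$ is obtained from $\sS$ by merging packets $j$ and $j{+}1$ on handle $i$; the endpoint cases are analogous. I would first enumerate the strata $\sS'\supset\sS$ of $\HH_g'(S\cup K)$ appearing in \eqref{equ:int_def_sector}: each is determined by an ordered insertion, for every handle $l$, of the $k_l$ extra $K$-packets into the list of $r_l$ original $\sS$-packets on handle $l$. The analogous enumeration holds for $\tilde\sS'\supset\tilde\sS$, with one fewer original packet on handle $i$. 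For each $\sS'$, the restriction of $\int_{\pi_\Delta}\alpha_{\sS'}$ to the face $\p_j\Delta^{r_i}$ can be computed from the evident pullback square
\[
\begin{tikzcd}
F \ar[r, hook] \ar[d] & \Delta^{r_1+k_1}\times\cdots\times\Delta^{r_g+k_g} \ar[d, "\pi_\Delta"] \\
\p_j\Delta^{r_i}\subset\Delta^{r_i} \ar[r, hook] & \Delta^{r_1}\times\cdots\times\Delta^{r_g},
\end{tikzcd}
\]
where $F$ is the preimage of $\p_j\Delta^{r_i}$. Base change allows me to swap the restriction past the fiber integral, and then the continuity of $\alpha$ lets me replace the restriction of $\alpha_{\sS'}$ to $F$ by the cocomposition of $\alpha_{\tilde\sS'}$ on an adjacent stratum.

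The main technical point is that the strata $\sS'$ split into two classes. In the generic class, the extra $K$-packets inserted into handle $i$ do not sit between the merging packets $j$ and $j{+}1$; then $F$ is a genuine codimension-one face of the simplex product, base change plus continuity of $\alpha$ yields exactly one summand of $\Delta_{*}(\int_k\alpha)_{\tilde\sS}$, and the correspondence $\sS'\leftrightarrow\tilde\sS'$ is bijective after shifting packet indices. In the remaining ``squeezed'' class, some $K$-packet is trapped between packets $j$ and $j{+}1$ on handle $i$; then the corresponding fiber of $\pi_\Delta$ over $\p_j\Delta^{r_i}$ collapses to lower dimension, so the polynomial fiber integral vanishes on the nose. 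Summing over $\sS'$ thus recovers exactly $\Delta_{*}(\int_k\alpha)_{\tilde\sS}$, proving \eqref{equ:HHgcontinuity1}.

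The endpoint conditions \eqref{equ:HHgcontinuity2} and \eqref{equ:HHgcontinuity3} are handled by the same argument: the merging now occurs at $\p_0\Delta^{r_i}$ or $\p_{r_i}\Delta^{r_i}$, the cocomposition involves the appropriate $\BVc_C$-comodule structure on the bulk factor $\BVc_{(g,g)}$, and the squeezed strata (where a $K$-packet is trapped between the merging packet and the bulk) again contribute zero. The principal obstacle is not conceptual but combinatorial: carefully setting up the bijection between $\sS'$-strata in $\HH_g'(S\cup K)$ and $\tilde\sS'$-strata in $\HH_g'(S)$ after the shifts of packet indices, and verifying that the base-change identification matches the cocomposition $\Delta_{*}$ with the correct signs. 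I expect this bookkeeping to be the only real source of difficulty, and it should go through without any new ideas.
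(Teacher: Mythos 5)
Your proposal is correct and takes essentially the same route as the paper, whose proof is just the two-sentence observation that continuity of $\int_k\alpha$ follows from continuity of $\alpha$ together with the compatibility of the fiber integral with boundary restriction and cocomposition. Your additional bookkeeping --- the stratum-by-stratum matching and the vanishing of the degenerate ``squeezed'' contributions --- is precisely the content hidden in the paper's phrase ``suitably commutes'', so you have simply written out the details the authors omit.
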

\begin{proof}
	We have to check that our collection $\left(\int_k\alpha\right)_{\sS}$ satisfies the continuity equations \eqref{equ:HHgcontinuity1}-\eqref{equ:HHgcontinuity3} of section \ref{sec:HHg_notation}.
	In fact, they follow directly from the continuity equations for $\alpha$, and the fact that out fiber integration map suitably commutes with the restriction to the boundary of the simplices (left-hand side) and the cocomposition (right-hand side).
\end{proof}
	
	As a first property we note the following.
	\begin{lemma}[Fiberwise Fubini's Theorem]\label{lem:fubini}
		For $\alpha\in \HH_g'(r+k)$ and $\beta \in \HH_g'(r+l)$ we have that 
		\[
		\int_{k+l} \alpha \wedge \beta
		=
		\pm \left( \int_{k} \alpha \right)\wedge \left(\int_l \beta\right).
		\]
	\end{lemma}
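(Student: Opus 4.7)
The plan is to reduce the identity to a stratum-by-stratum statement and then to a Fubini-type identity on polynomial forms on simplices. Fix a stratum $\sS = (\underline r, f)$ of $r$ points in the sense of Section~\ref{sec:HHg_notation}. By \eqref{equ:int_def_sector}, the value $(\int_{k+l}(\alpha \wedge \beta))_{\sS}$ is a sum over strata $\sS''$ of $r+k+l$ points above $\sS$ in which each of the new $k+l$ points occupies a single-point packet on some handle; symmetrically, $((\int_k \alpha) \wedge (\int_l \beta))_{\sS}$ is the product of the corresponding sums over strata $\sS'$ (extending $\sS$ by the $k$ points of $\alpha$) and $\sS^\beta$ (extending $\sS$ by the $l$ points of $\beta$). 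The first step is to observe that any $\sS''$ projects uniquely to an $\sS'$ by forgetting the $L$-packets and to $\sS^\beta$ by forgetting the $K$-packets, and that conversely each pair $(\sS',\sS^\beta)$ with compatible handle distributions is the image of several $\sS''$s, one for every handle-wise shuffle describing how the $K$-packets of $\sS'$ interleave with the $L$-packets of $\sS^\beta$.

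Second, I would unfold the integrand $(\alpha \wedge \beta)_{\sS''}$. Interpreting $\alpha \wedge \beta$ as the product in $\HH_g'(r+k+l)$ of the pullbacks of $\alpha$ and $\beta$ along the forgetful cooperations, its value on $\sS''$ is, up to Koszul signs from commuting graded factors, the product of $\alpha_{\sS'}$ and $\beta_{\sS^\beta}$ pulled back via the respective forgetful projections. Crucially, the two pullbacks occupy complementary tensor slots: $\alpha$'s nontrivial $\BVc_C$-content sits on the $K$-packets (with units inserted on the $L$-packets), and its simplicial form is pulled back from $\Delta^{r_1+k_1}\times\cdots\times\Delta^{r_g+k_g}$ by forgetting the $L$-coordinates, and symmetrically for $\beta$.

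Third, the integration map $\int_{\sS''\to\sS} = \mathrm{id} \otimes \p^{\otimes(k+l)} \otimes \int_{\pi_\Delta}$ then splits: the projection $\p^{\otimes(k+l)}$ factors as $\p^{\otimes k} \otimes \p^{\otimes l}$ on the $K$- and $L$-packet slots, acting separately on the two pullbacks, while the simplicial fiber integral decomposes Fubini-style on each handle. For fixed $(\sS',\sS^\beta)$, the sum over shuffled strata $\sS''$ reorganizes the chambers of the simplex $\Delta^{r_p+k_p+l_p}$ fibered over $\Delta^{r_p}$ into the fibered product $\Delta^{r_p+k_p}\times_{\Delta^{r_p}}\Delta^{r_p+l_p}$, over which ordinary Fubini converts the fiber integral of a product of pullbacks into a product of individual fiber integrals. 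Assembling these factorizations across all handles and summing over $(\sS',\sS^\beta)$ then yields precisely $((\int_k \alpha) \wedge (\int_l \beta))_{\sS}$.

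The main obstacle will be the sign bookkeeping in this shuffle–Fubini step: the orientation signs of the shuffle chambers of $\Delta^{r_p+k_p+l_p}$, the Koszul signs from interleaving the $\p$-operators with forms of nontrivial degree, and the sign from commuting the $\alpha$-factor past the $\beta$-factor under graded-commutative multiplication must all combine into a single overall sign depending only on $|\alpha|$, $|\beta|$, $k$ and $l$. Up to this sign tracking, the argument is a fiberwise version of the classical shuffle-product identity $\int_{[0,1]^{m+n}}(p\otimes q) = (\int p)(\int q)$, and no further conceptual difficulty is anticipated.
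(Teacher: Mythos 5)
Your proposal is correct and follows exactly the route the paper takes: the paper's entire proof is the one-line remark that the statement ``just follows from the usual Fubini formula for the fiber integral over the simplices,'' and your stratum-by-stratum reduction, the shuffle decomposition of the strata $\sS''$ over a pair $(\sS',\sS^{\beta})$ into the fibered product $\Delta^{r_p+k_p}\times_{\Delta^{r_p}}\Delta^{r_p+l_p}$, and the factorization of $\p^{\otimes(k+l)}$ are precisely the details being elided there. Your version is a faithful (and welcome) expansion of the same argument, modulo the sign bookkeeping you already flag.
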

\begin{proof}
	This just follows from the usual Fubini formula for the fiber integral over the simplices.
\end{proof}
	
	\subsubsection{The (co-)Leibniz identity for the cooperad $\BVc$ and related objects}
	To prepare for the proof of the fiberwise Stokes' Theorem in the next subsection, we need some recollections about the cooperad $\BVc$ and the objects $\BVc_C$ and $\BVc_{(g,g)}$.
	
	First consider the cooperad $\BVc$, and the space of $r+1$-ary cooperations $\BVc(r+1)$.
	For $1\leq i\neq j\leq r+1$ consider the cocompositions 
	\[
	\Delta_{ij} : \BVc(r+1) \to \BVc(2) \otimes\BVc(r)
	\]
	described by the tree
	\[
	\begin{tikzpicture}[scale=.5]
	\node {}
		child {
			child{ child { node {$i$}}
				     child { node {$j$}} 
				    }
			child{ node {$1$} }
			child{ node {$\cdots$} }
			child{ node {$\hat i$} }
			child{ node {$\cdots$} }
			child{ node {$\hat j$} }
			child{ node {$\cdots$} }
			child{ node {$r+1$} }
	};
	\end{tikzpicture}
	\]
	In geometric terms ``points $i$ and $j$ approach each other''.
	Similarly, consider the cocompositions 
	\[
	\Delta_{0j} : \BVc(r+1) \to \BVc(2) \otimes\BVc(r)
	\]
	described by the tree 
	\[
		\begin{tikzpicture}[scale=.5]
	\node {}
	child {
		 child { node {$j$}} 
		 child {
			child{ node {$1$} }
			child{ node {$\cdots$} }
			child{ node {$\hat j$} }
			child{ node {$\cdots$} }
			child{ node {$r+1$} }
		}
	};
	\end{tikzpicture}
	\]
	In other words ``point $j$ approaches $\infty$''.
	Let $\pi:  \BVc(2)\to \Q$ be the projection to the cobracket cogenerator.
	Geometrically, this can be described as 
	\[
	\alpha \mapsto \pi\alpha\coloneqq \int_{\FFM_2(2)} \alpha \wedge \theta_1\theta_2.
	\]
	We then describe, for $0\leq i\neq j\leq r+1$, the following operators of degree $-1$
	\[
	\p_{ij} : \BVc(r+1) \to \BVc(r),
	\]
	defined as the composition
	\[
	\BVc(r+1) \xrightarrow{\Delta_{ij}} 
	 \BVc(2) \otimes\BVc(r)
	 \xrightarrow{\pi \otimes \mathit{id}} \BVc(r).
	\]
	By a simple computation, one than has:
	\begin{lemma}[co-Leibniz identity]\label{lem:leibniz1}
		For any $\alpha\in \BVc(r+1)$ we have that 
		\[
		\p_{0,r+1} \alpha = \sum_{i=1}^r \p_{i,r+1}\alpha.
		\]
	\end{lemma}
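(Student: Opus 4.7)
My plan is to verify this identity by direct computation on a generating set, exploiting the fact that $\BVc(r+1)$ is a quasi-free graded commutative algebra.

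First, since $\BVc(r+1)$ is generated as a commutative algebra by the degree-$1$ symbols $\omega_{ij}$ and $\theta_k$, and each cocomposition $\Delta_{ij}$ is a morphism of commutative algebras, the composite $\p_{ij}=(\pi\otimes\mathrm{id})\circ\Delta_{ij}$ satisfies a twisted Leibniz-type rule inherited from the primitive property $\pi(xy)=\pi(x)\epsilon(y)+(-1)^{|x|}\epsilon(x)\pi(y)$ of the projection onto the cobracket cogenerator, where $\epsilon$ is the augmentation of $\BVc(2)$.

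Second, I would check the statement on generators. The only generators on which any $\p_{*,r+1}$ can act non-trivially are those involving the index $r+1$, namely $\omega_{i,r+1}$ for $i\in\{1,\ldots,r\}$ and $\theta_{r+1}$. Tracing through the formulas for $\Delta_{0,r+1}$ and $\Delta_{j,r+1}$ and applying $\pi$, one finds $\p_{0,r+1}\omega_{i,r+1}=1=\p_{i,r+1}\omega_{i,r+1}$ and $\p_{j,r+1}\omega_{i,r+1}=0$ for $j\neq i$, so both sides evaluate to $1$; similarly both sides vanish on $\theta_{r+1}$ and on all generators without the index $r+1$.

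Third, I would propagate the identity to arbitrary monomials by induction on polynomial degree, using the twisted Leibniz rule for $\p_{ij}$ together with a matching identity for the ``no-collision'' projections $(\epsilon\otimes\mathrm{id})\Delta_{ij}\alpha$ that appear as the twists.

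The principal obstacle is the careful bookkeeping of Koszul signs together with the multiple cases of the cocomposition formulas (inner block vs.\ outer indices, and the different structural roles played by the $\BVc(2)$-factor in $\Delta_{0,r+1}$ versus $\Delta_{ij}$). A useful geometric sanity check is that the identity is dual to the standard relation in the degree-one cohomology of the fiber of the forgetful map $\FFM_2(r+1)\to\FFM_2(r)$ asserting that the class of a small loop around infinity equals the sum of the classes of the small loops around each of the $r$ punctures.
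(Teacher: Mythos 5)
The paper offers no argument for this lemma beyond the phrase ``by a simple computation'', so there is no proof to compare your route against; the only question is whether your route closes, and as written it does not. Your generator check in the second step is correct. The gap is the third step. The maps $\p_{0,r+1}$ and $\p_{i,r+1}$ are derivations twisted by the algebra morphisms $q_{0,r+1}=(\epsilon\otimes\id)\circ\Delta_{0,r+1}$ and $q_{i,r+1}=(\epsilon\otimes\id)\circ\Delta_{i,r+1}$, and these twists are \emph{not} equal: $q_{0,r+1}$ annihilates every generator carrying the index $r+1$, whereas $q_{i,r+1}(\omega_{i,r+1})=\theta_i$ and $q_{i,r+1}(\omega_{j,r+1})=\omega_{ij}$ for $j\neq i$. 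So ``both sides are twisted derivations agreeing on generators'' proves nothing, and there is no ``matching identity'' at the level of the $q$'s alone: already for $\alpha=\omega_{13}\omega_{23}\in\BVc(3)$ one has $\p_{1,3}(\omega_{13})\,q_{1,3}(\omega_{23})=\omega_{12}$ while $\p_{1,3}(\omega_{13})\,q_{0,3}(\omega_{23})=0$; the identity holds on this element only because the two halves of the Leibniz expansion cancel against each other ($\p_{1,3}(\alpha)=\omega_{12}$, $\p_{2,3}(\alpha)=-\omega_{12}$, $\p_{0,3}(\alpha)=0$). Your induction therefore needs a cross-term identity mixing the $\p$'s and $q$'s of \emph{both} factors simultaneously, which you neither state nor prove.

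The way to make the computation genuinely finite is to use the normal form $\alpha=P_0+\sum_j\omega_{j,r+1}P_j+\theta_{r+1}\bigl(Q_0+\sum_j\omega_{j,r+1}Q_j\bigr)$ with $P_\bullet,Q_\bullet\in\BVc(r)$, available by the Arnold relations, and evaluate both sides on each type of summand. Doing so exposes the one genuinely delicate case, which your proposal never meets because your generator check only sees the unframed classes: on $\theta_{r+1}\omega_{j,r+1}$ the cross-term $(1\otimes\theta_j)\cdot(\omega\otimes 1)$ in $\Delta_{j,r+1}$ gives $\sum_{i=1}^r\p_{i,r+1}\bigl(\theta_{r+1}\omega_{j,r+1}\bigr)=\pm\theta_j$, whereas the naive reading of $\Delta_{0,r+1}$ sends both $\theta_{r+1}$ and $\omega_{j,r+1}$ into the $\BVc(2)$-factor, so that $\p_{0,r+1}\bigl(\theta_{r+1}\omega_{j,r+1}\bigr)=0$. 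The framing generators are thus exactly where the two sides threaten to disagree; a correct proof has to pin down how $\theta_{r+1}$ cocomposes ``at infinity'' (this is where the cyclic structure and the frame twist of \eqref{equ:IBVc1} enter) and treat these monomials explicitly. Your closing geometric sanity check about loops around punctures versus the loop around infinity concerns only the degree-one, unframed part of the statement and does not detect this issue.
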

	Similarly, note that for any right $\BVc$-comodule $\op M$ we can define the operations 
	\[
	\p_{ij} : \op M(r+1) \to \op M(r)
	\] 
	by (essentially) the same formula as above.

	Next, consider our model for configurations of points on the cylinder $\BVc_C$, and the objects $\BVc_{(p,q)}$ from above.
	In particular we have $p$ left $\BVc_C$-coactions and $q$ right $\BVc_C$-coactions, and additionally coactions by the cooperad $\BVc$.
	Let us label the $p$ left $\BVc_C$-coations by $\bar 1,\dots \bar p$, and the $q$ right $\BVc_C$-coactions by $\underline 1,\dots \underline q$.
	For $a\in \{\bar 1,\dots \bar p, \underline 1,\dots \underline q\}$ and $j\in \{1,\dots,r+1\}$ we then have the cocompositions
	\[
	\Delta_{aj}\BVc_{(p,q)}(r+1) \to \BVc_C(1) \otimes \BVc_{(p,q)}(r),
	\]
	such that point $j$ ``approaches $a$''.
	As before, we extend the definition of the projection to the cobracket to
	\[
	\pi :  \BVc_C(1) \to \Q.
	\]
	And we define, just as before, the operations $\p_{aj}$ by composing the operation $\Delta_{aj}$ with $\pi$ applied to the first factor in the tensor product.
	Note that the operations $\p_{ij}$ from the right $\BVc$-comodule structure are still defined on $\BVc_{(p,q)}$.
	We then have the following result, which is in fact nothing but a disguised form of Lemma \ref{lem:leibniz1}
	\begin{lemma}[co-Leibniz identity]\label{lem:leibniz2}
		For any $\alpha\in \BVc_{(p,q)}(r+1)$ we have that 
		\[
		\sum_{j=1}^p \p_{\bar j,r+1} \alpha 
		=
		\sum_{k=1}^q\p_{\underline k,r+1} \alpha 
		+
		 \sum_{i=1}^r \p_{i,r+1}\alpha.
		\]
		In particular, for $\BVc_C\cong \BVc_{(1,1)}$ this specializes to
		\[
		\p_{0,r+1} \alpha = \p_{*,r+1}\alpha + \sum_{i=1}^r \p_{i,r+1}\alpha,
		\]
		where the index $0$ corresponds to using the left coaction, $*$ corresponds to the right coaction and $i$ the right $\BVc$-coactions.
	\end{lemma}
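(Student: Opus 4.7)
The plan is to deduce the identity from Lemma~\ref{lem:leibniz1} applied in the ambient cooperad $\BVc$. By the construction of section~\ref{sec:conf_from_FFM}, $\BVc_{(p,q)}(r+1) = H^*(\FFM_2^{(1, p+q-1)}(r+1))$ can be regarded as a subalgebra of $\BVc(r+p+q)$, where the $p+q-1$ distinguished indices split into $q$ right-type (labeled $\underline 1, \dots, \underline q$) and $p-1$ left-type (labeled $\bar 2, \dots, \bar p$, obtained by reinterpretation via the involution $I$ of Remark~\ref{rem:involution}); the remaining label $\bar 1$ corresponds to the additional ``output'' left coaction, which is identified with the ambient cooperadic cocomposition $\Delta_{0, r+1}^{\BVc}$.

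The first step is to set up a dictionary between the operations $\p_{a, r+1}$ on $\BVc_{(p,q)}(r+1)$ and operations on $\BVc(r+p+q)$. The operations $\p_{i, r+1}$ for $1 \le i \le r$, $\p_{\underline k, r+1}$ for $1 \le k \le q$, and $\p_{\bar 1, r+1}$ restrict directly from the ambient operations with labels $i$, $\underline k$, and $0$ respectively. The subtle case is $\p_{\bar j, r+1}$ for $j \ge 2$, where the left coaction involves the involution $I$; using the explicit formulas $I(\omega_{*1}) = -\omega_{*1}$ and $I(\theta_1) = \theta_1 - 2\omega_{*1}$ from \eqref{equ:IBVc1}, one checks that the Lie cobracket projection $\pi$ (with its appropriate normalization) satisfies $\pi \circ I = -\pi$ on $\BVc_C(1)$, so that $\p_{\bar j, r+1} = -\p_{\bar j, r+1}^{\BVc}$ for $j \ge 2$. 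Applying Lemma~\ref{lem:leibniz1} in $\BVc(r+p+q)$ to the index $r+1$ yields
\[
\p_{0, r+1}^{\BVc} \alpha = \sum_{i=1}^r \p_{i, r+1}^{\BVc} \alpha + \sum_{j=2}^p \p_{\bar j, r+1}^{\BVc} \alpha + \sum_{k=1}^q \p_{\underline k, r+1}^{\BVc} \alpha,
\]
and substituting the dictionary and rearranging gives the claimed identity
\[
\sum_{j=1}^p \p_{\bar j, r+1} \alpha = \sum_{k=1}^q \p_{\underline k, r+1} \alpha + \sum_{i=1}^r \p_{i, r+1} \alpha.
\]
The specialization to $\BVc_C = \BVc_{(1,1)}$ is then immediate after relabeling $\bar 1 \to 0$ and $\underline 1 \to *$.

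The main obstacle I anticipate is the sign compatibility $\pi \circ I = -\pi$ on $\BVc_C(1)$, which hinges on the precise normalization of the Lie cobracket projection; if this clean flip failed, one would need to track correction terms arising from the nontrivial action of $I$ on the framing generators $\theta_i$. As a sanity check (or alternative, purely combinatorial proof), the identity can be verified directly on the algebra generators of $\BVc_{(p,q)}(r+1)$ listed in section~\ref{sec:BVcnotation}, namely $\omega_{ij}$, $\theta_i$, $\omega_{i\underline j}$, and $\omega_{i\overline j}$, on each of which the cocompositions and projections evaluate to a small number of nonzero terms and the identity reduces to a bookkeeping check.
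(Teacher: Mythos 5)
Your overall strategy is exactly what the paper intends: the paper offers no proof of this lemma beyond the remark that it is ``a disguised form of Lemma~\ref{lem:leibniz1}'', and your reduction to the ambient cooperad, identifying $\BVc_{(p,q)}(r+1)$ with (a quotient of) $\BVc(r+p+q)$ with $\bar 1$ playing the role of the cooperadic output, is the right dictionary. The operations $\p_{i,r+1}$, $\p_{\underline k,r+1}$ and $\p_{\bar 1,r+1}$ do restrict from the ambient ones as you say.

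However, the obstacle you flag is real and does not go away. With the normalization forced by the paper's integral formula $\pi\alpha=\int\alpha\wedge\theta_1\theta_2$, one has $\p(\omega_{*1})=1$ and $\p(\theta_1)=0$, hence $\p(I(\theta_1))=\p(\theta_1-2\omega_{*1})=-2\neq-\p(\theta_1)$: the identity $\p\circ I=-\p$ holds on the span of $1$ and $\omega_{*1}$ but fails on $\theta_1$. The correction term hits exactly those $\alpha$ containing the framing class $\theta_{r+1}$, and for such $\alpha$ the stated identity genuinely fails: for $\alpha=\theta_{r+1}\in\BVc_{(g,g)}(r+1)$ with $g\geq 2$ every term on the right-hand side vanishes, while $\p_{\bar j,r+1}\theta_{r+1}=\p(I(\theta_1))=-2$ for $j\geq 2$. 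This is an imprecision inherited from the statement you are asked to prove: already Lemma~\ref{lem:leibniz1} fails on such elements (for $\alpha=\theta_2\omega_{12}\in\BVc(2)$ one computes $\p_{0,2}\alpha=0$ but $\p_{1,2}\alpha=\pm\theta_1$, dually reflecting that $[\Delta a,b]\neq\Delta[a,b]$ in a BV algebra). Both lemmas are valid, and are only applied, on elements not involving the framing class of the point $r+1$ — the forms occurring in the Stokes formula carry no $\theta$ at the integrated points since internal vertices have no tadpoles. Note also that your fallback of verifying the identity on the algebra generators of section~\ref{sec:BVcnotation} is not sufficient on its own: each $\p_{a,r+1}$ is a derivation only relative to the $a$-dependent map $(\epsilon\otimes\id)\circ\Delta_{a,r+1}$, and these maps do not coincide for different $a$, so the identity does not propagate from generators to products (and indeed it fails on $\theta_{r+1}\omega_{j,r+1}$). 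A correct write-up should restrict the claim to the quotient with $\theta_{r+1}$ set to zero; there $\p\circ I=-\p$ does hold and your reduction to Lemma~\ref{lem:leibniz1} goes through as written.
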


	\subsubsection{Stokes formula}
	We want to show that our combinatorial fiber integral satisfies a version of Stokes' formula. To formulate the statement we need the additional degree -1 operations (boundary operators in some sense)
	\[
	\p_{ij} : \HH_g'(r+k) 
	\to \HH_g'(r+k-1)
	\]
	defined in the previous subsection for any right $\BVc$-comodule.
	Then our result is the following.
	
	\begin{proposition}[Combinatorial Stokes formula]\label{prop:stokes}
		The combinatorial fiber integral $\int_k:\HH_g'(r+k) \to \HH_g'(r)$ of section \ref{sec:fibintdef} satisfies the following Stokes' type formula 
		\begin{equation}\label{equ:stokes}
		d\int_k  \alpha \mp \int_k d\alpha=
		 \sum_{i=r+1}^{r+k}
		 \sum_{j=1}^{i-1}
		 \int_{k-1}
		 \p_{ij} \alpha\, .
		\end{equation}
	\end{proposition}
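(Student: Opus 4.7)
\emph{Approach.} The strategy is to reduce \eqref{equ:stokes} to the classical Stokes formula for polynomial pushforwards on simplices, combined with the cooperadic co-Leibniz identity (Lemma \ref{lem:leibniz2}). Since $\BVc$, $\BVc_C$, and $\BVc_{(g,g)}$ all carry zero differential, the differential on $\HH_g'$ restricts on each stratum to the polynomial de Rham differential acting on the simplex factor. The fiber integral $\int_k$ being defined stratum by stratum via \eqref{equ:int_def_sector}, I would verify \eqref{equ:stokes} one stratum $\sS = (\underline r, f)$ at a time.

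\emph{Stokes on each substratum.} Fix $\sS$ and apply the classical Stokes formula to the polynomial fiber integral $\int_{\pi_\Delta}$ appearing in each summand $\int_{\sS' \to \sS}\alpha_{\sS'}$, where $\sS' \supset \sS$ is an extension inserting the $k$ extra points into the handles of $\sS$ (each alone in its packet). The fiber of $\pi_\Delta$ is a product of simplices, one per gap in the interspersal pattern, whose boundary consists of three types of codimension-one faces: (a) two adjacent extra points from $K$ collide; (b) an extra point from $K$ collides with an adjacent base packet from $\sS$; (c) an extra point from $K$ hits a handle endpoint $t=0$ or $t=1$.

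\emph{Matching boundary terms to $\partial_{ij}\alpha$.} The continuity conditions \eqref{equ:HHgcontinuity1}--\eqref{equ:HHgcontinuity3} defining elements of $\HH_g'$ force the restriction of $\alpha_{\sS'}$ to each such face to equal a cocomposition of the value of $\alpha$ on the adjacent stratum. Composing with the cobracket projection $\partial$ from \eqref{equ:pBVQdef} on the $\BVc_C(1)$-factors associated to extra points, contributions of types (a) and (b), summed over all interspersal patterns in which $i$ and $j$ are adjacent, yield precisely $\int_{k-1}\partial_{ij}\alpha$ for $j<i$ in $S\cup K$: in both cases, the continuity cocomposition $\Delta$ followed by $\pi$ on the newly created $\BVc_C$-factor (respectively $\BVc$-factor) matches the definition of $\partial_{ij}=(\pi\otimes\id)\circ\Delta_{ij}$. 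Type (c) faces instead produce contributions of the form $\partial_{\bar m,i}\alpha$ and $\partial_{\underline m,i}\alpha$ involving the left and right $\BVc_C$-coactions on $\BVc_{(g,g)}$, with an opposite sign at the $t=1$-end coming from the involution \eqref{equ:IBVc1}.

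\emph{Main obstacle: the co-Leibniz reduction and signs.} The right-hand side of \eqref{equ:stokes} involves only "interior" coactions $\partial_{ij}$ indexed by configuration points $j\in S\cup K$, so the type (c) contributions must be rewritten. This is exactly the content of Lemma \ref{lem:leibniz2}: the combination $\sum_m \partial_{\bar m,i}\alpha-\sum_m \partial_{\underline m,i}\alpha$ equals $\sum_{j\ne i} \partial_{ji}\alpha$, converting the handle-endpoint contributions into the missing "long-range" interior terms (those pairs $(i,j)$ that are never adjacent in any interspersal). The remaining technical work is orientation bookkeeping: reconciling the signs from classical fiber Stokes (including the $(-1)^{|\alpha|}$ responsible for the $\mp$ in \eqref{equ:stokes}), the sign produced by \eqref{equ:IBVc1} at the right handle end, and the signs coming from reordering the cobracket projections on collided pairs. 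This sign bookkeeping, rather than any conceptual difficulty, is the main obstacle; the structural content is just the trio of Stokes, continuity, and co-Leibniz outlined above.
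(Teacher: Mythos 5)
Your proposal follows the same route as the paper's proof: classical Stokes for the polynomial pushforward over the simplices, the continuity conditions \eqref{equ:HHgcontinuity1}--\eqref{equ:HHgcontinuity3} to identify boundary restrictions with cocompositions, and the co-Leibniz identity of Lemma \ref{lem:leibniz2} to convert boundary coactions into the interior terms $\p_{ij}$. One step is misidentified, though. For faces of types (a) and (b) --- a moving point $i$ colliding with another packet $P$ --- the cocomposition dictated by the continuity condition \eqref{equ:HHgcontinuity1} is the \emph{cylinder-stacking} coproduct $\BVc_C(P\cup\{i\})\to\BVc_C(\{i\})\otimes\BVc_C(P)$ (resp.\ its mirror image when $i$ approaches from the other side), not the point-merging cocomposition $\Delta_{ij}$; after projecting with $\p$ on the $\BVc_C(\{i\})$-factor it produces the boundary coactions $\p_{0,i}$ and $\p_{*,i}$ relative to that packet, not $\p_{ij}$. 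Hence the co-Leibniz identity (in its $\BVc_C\cong\BVc_{(1,1)}$ form) is needed for these faces as well: it is the difference of the left- and right-approach contributions that equals $\sum_{j\in P}\p_{ji}$. This is exactly how the paper treats all three types of faces uniformly, reserving the $\BVc_{(g,g)}$-version of co-Leibniz for the handle-endpoint faces, which indeed supply the "long-range" pairings with bulk points as you say. The fix costs nothing since you already invoke the right lemma, but as written the claim that types (a) and (b) "match the definition of $\p_{ij}$" directly is false.
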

	\begin{proof}
		Recall the definition of the fiber integral \eqref{equ:int_def_sector}.
		Note that the first ingredient, the map \eqref{equ:pBVQdef}, commutes with the differential, simply because the differential is zero on $\BVc_C$.
		The second ingredient, the fiber integral on simplices \eqref{equ:intpiDeltadef}, satisfies the ordinary Stokes' formula
		\[
		d\int_{\pi_\Delta} \beta-\int_{\pi_\Delta} d\beta = 
		\int_{\p} \beta\mid_\p,
		\]
		where $\beta\in \Omega_{poly}(\Delta^{r_1+k_1}\times \cdots \times \Delta^{r_1+k_1})$, the integral on the right-hand side is over the fiber-wise boundary, and $\beta\mid_\p$ is the restriction of $\beta$ to said boundary. In fact, we will henceforth set $\beta\mid_\p=\beta$ for simplicity.
		We shall again think of the simplex $\Delta^{r_i+k_i}$ as a space of configurations of $r_i+k_i$ points on the unit interval.
		Similarly, the base $\Delta^{r_i}$ corresponds to $r_i$ points on the interval, and the fiber of the forgatful map $\Delta^{r_i+k_i}\to \Delta^{r_i}$ corresponds to a product of smaller simplices, corresponding to the forgotten points moving between the points of the base configuration.
		There are hence three possible types of fiberwise boundary strata:
		\begin{itemize}
			\item A moving point collides with one of the fixed points from the base configuration.
			\item A moving point collides with one of the ends of the unit interval.
			\item Two moving points collide with each other.
		\end{itemize}
		Let us compute the contribution to the right-hand side of \eqref{equ:stokes} in each of the three cases above.
		First, fix the colliding point, say $p\in \{r+1,\dots, r+k\}$.
		The restriction to the relevant boundary of our $\alpha_{\sS'}$ (with $\sS'$ as in \eqref{equ:int_def_sector}) is then of the form $\alpha_{\sS'}\mid_{\p_j\Delta^{r_i}}$.
		By the continuity equation \eqref{equ:HHgcontinuity1} we can equate this to the cocomposition $\Delta\alpha_{\sS''}$.
		We note that in \eqref{equ:int_def_sector} this term is subjected to a further application of the projection map $\p$.
		Furthermore, there occur two similar boundary terms, one in which our point $p$ approaches the packet from the left and one where it approaches from the right.
		Hence we are precisely in the situation that we can apply the co-Leibniz identity for $\BVc_C$, see Lemma \ref{lem:leibniz2}.
		
		Before we go on, we consider also the second type of fiberwise boundary, namely when our moving point $p$ in the fiber hits the endpoints 0 or 1 of the interval. Arguing as before, but using the continuity equations \eqref{equ:HHgcontinuity2}, \eqref{equ:HHgcontinuity3} in this case, we see that these terms can be seen as cocompositions $\Delta \alpha_{\sS'''}$, now acting on the bulk term.
		Again we can apply the co-Leibniz identiy, but this time for $\BVc_{(g,g)}$, see Lemma \ref{lem:leibniz2}.
		Overall, combining these terms, we arrive at 
		\[
		d\int_k  \alpha \mp \int_k d\alpha=
		 \sum_{i=r+1}^{r+k}
		\sum_{j=1}^{r}
		\int_{k-1}
		\p_{ij} \alpha
		+(X),
		\]
		 where $(X)$ denotes the yet untreated boundary terms of the third kind.
		 Those can be treated in a parallel manner, again using the co-Leibniz identity for $\BVc_C$, and thus the result follows.
	
	\end{proof}
	
	\subsection{The map and end of the proof of Theorem \ref{thm:main} for $g\geq 1$}\label{sec:themapF}
	We finally define our map $F:\BVGraphs_{\Sigma_g}\to \HH_g$ on a graph $\Gamma\in\BVGraphs_{\Sigma_g}(r)$ with $k$ internal vertices as
	\[
	F(\Gamma) = \int_{k} A(\Gamma).
	\]
	We note that this map is defined over $\Q$, since in the worst case one needs to compute integrals of rational polynomial forms over simplices.
	
	\begin{proposition}
		The map $F$ is a well-defined map of right dg Hopf $\BVc$-comodules.
	\end{proposition}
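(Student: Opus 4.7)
The plan is to verify in turn the four requirements of the statement: that $F(\Gamma)$ lives in $\HH_g(r)$ (not merely in the ambient product), that $F$ is multiplicative, that $F$ intertwines the $\BVc$-coactions, and that $F$ commutes with the differentials (including the $Z^{triv}$-dependent piece). The first point is automatic: by Lemma \ref{lem:def-A} we have $A(\Gamma)\in\HH'_g(r+k)$, and by construction the combinatorial fiber integral $\int_k$ of section \ref{sec:fibintdef} takes $\HH'_g(r+k)$ into $\HH'_g(r)\subset\HH_g(r)$. Multiplicativity is immediate from the fact that the product on $\BVGraphs_{\Sigma_g}$ is graph superposition at external vertices, that $A$ is an algebra map on generators by the Feynman rules \eqref{equ:feynman_rules}, and that the fiber integral $\int_{k+l}$ factors as the product of $\int_k$ and $\int_l$ along disjoint sets of internal vertices (Lemma \ref{lem:fubini}). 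Compatibility with the $\BVc$-coaction is reduced, by the same multiplicative argument, to checking it on each generator of $\Gra_{H_g}$; the edge case is exactly the key identity \eqref{equ:delta_omega_eta} $\Delta\omega=\eta\otimes 1+1\otimes\omega_{12}$ (which is why $\eta$ was designed the way it was), and the decoration cases follow from the fact that $\pi_i^*$ on $\HH_g$ is compatible with cooperadic projection onto a singleton.

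The substantive work is the chain map property. Applying the combinatorial Stokes formula (Proposition \ref{prop:stokes}) to $F(\Gamma)=\int_k A(\Gamma)$ gives
\[
dF(\Gamma)\;=\;\pm\int_k dA(\Gamma)\;+\;\sum_{i=r+1}^{r+k}\sum_{j=1}^{i-1}\int_{k-1}\p_{ij}A(\Gamma).
\]
Since $A$ is a dgca map, the first term is $\pm\int_k A(d_s\Gamma)=\pm F(d_s\Gamma)$, where $d_s$ is the splitting differential \eqref{equ:dongra} driven by the formula for $d\omega$ in Lemma \ref{lem:prop} and by $d\eta$. It therefore remains to match the boundary terms on the right with the contraction piece $d_c$ and with the $Z^{triv}$-contribution in the differential of $\BVGraphs_{\Sigma_g}^{Z^{triv}}$.

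The boundary terms split into two geometric classes, according to whether the colliding vertex $i$ merges with an external vertex/other internal vertex carrying an incident edge, or not. A direct computation using the explicit formulas \eqref{equ:omega_A}--\eqref{equ:omega_B2C1} shows that $\p_{ij}$ applied to the propagator part of $A(\Gamma)$ precisely extracts the edge-contracted graph, reproducing $d_c\Gamma$, and vanishes whenever there is no edge between $i$ and $j$. The step I expect to be the main obstacle is controlling those collision contributions in which several internal vertices coalesce to form a connected subgraph $\Gamma'$ containing no external vertex; these are the terms that the graphical differential handles via the partition function $Z$. One must show that the combinatorial integration of $A(\Gamma')$ produces exactly $Z^{triv}(\Gamma')$ as defined by \eqref{equ:Ztriv}. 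For multi-vertex $\Gamma'$ the vanishing should follow from a degree/dimension count: our propagator on the handle strata is linear in the handle coordinate $t_j$ with fiber forms of total polynomial degree exactly matched to the simplex dimension, so pushforward along $\pi_\Delta$ together with the projection $\p:\BVc_C(1)\to\Q$ in \eqref{equ:pBVQdef} annihilates any such closed configuration. For a single-vertex $\Gamma'$ decorated by $\alpha_1\cdots\alpha_s$, unwinding the definitions of section \ref{sec:cohom reps} shows $\int_1 A(\Gamma')=\int_{\Sigma_g}\alpha_1\cdots\alpha_s$, matching \eqref{equ:Ztriv}. Together these identifications exhibit $dF(\Gamma)=F(d\Gamma)$ and complete the proof.
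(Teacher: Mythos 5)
Your overall strategy coincides with the paper's: multiplicativity via the Fubini property of the combinatorial fiber integral, coaction compatibility via the identity $\Delta\omega=\eta\otimes 1+1\otimes\omega_{12}$ checked on cogenerators, and the chain-map property via the combinatorial Stokes formula, with the edge-contraction boundary terms producing $d_c$ and the splitting term producing $d_s$ plus a partition function that must be identified with $Z^{\mathrm{triv}}$. Up to that last point the argument is sound (one small misattribution: the vacuum components $\Gamma'$ arise from the splitting differential $d_s$ disconnecting the graph, i.e.\ from the term $\int_k A(d_s\Gamma)$ after applying Fubini, not from collision/boundary strata of the Stokes formula; the required identity $\int A(\Gamma')=Z^{\mathrm{triv}}(\Gamma')$ is the same either way).

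The genuine gap is the claim that for multi-vertex vacuum graphs $\Gamma'$ the vanishing ``follows from a degree/dimension count.'' It does not. Count degrees: a connected vacuum graph with $k$ internal vertices, $e$ edges and total decoration degree $D$ yields a form of degree $D+e$, while the combinatorial integral drops the degree by exactly $2k$ (one unit from the projection $\p:\BVc_C(1)\to\Q$ and one from the simplex pushforward, per vertex). Degree reasons only kill the cases $D+e\neq 2k$; this disposes of loop order $\ge 2$, but for trees it still leaves configurations with one degree-$2$ decoration, and for loop order $1$ it leaves wheels in which every vertex carries exactly one $a^j$ --- these have form degree exactly $2k$ and are \emph{not} annihilated by any dimension count. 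The paper needs two further, non-formal inputs here: the explicit vanishing of the one-edge integrals in Lemma \ref{lem:vanishing} (computations such as $\int_0^1(t-\tfrac12)\,dt=0$, which handle the tree case and force the decorations in a wheel to be a constant $a^j$), and a separate symmetry argument for the surviving wheels, using the involution $(t_1,\dots,t_k)\mapsto(1-t_1,\dots,1-t_k)$ of the cube under which the integrand is invariant while the orientation reverses for odd $k$ (even $k$ being killed by graph symmetry). Without these two steps the identification of the partition function with $Z^{\mathrm{triv}}$ --- and hence the assertion that $F$ lands in $\BVGraphs_{\Sigma_g}^{Z^{\mathrm{triv}}}$ rather than $\BVGraphs_{\Sigma_g}^{Z}$ for some unknown $Z$ --- is not established.
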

	\begin{proof}
		We first check that the map is compatible with the differentials.
		To this end temporarily define the partition function (cf. section \ref{sec:recoll-citec-anoth})
		\[
		Z : G_{H_g} \to \mathbb Q
		\]
		for $\Gamma\in G_{H_g}$ a graph with $k$ vertices as
		\[
		Z(\Gamma) = \int_k \Gamma,
		\]
		using our combinatorial fiber integral.
		Then it is immediate from the Stokes formula (Proposition \ref{prop:stokes}) that the map 
		$$
		F: \BVGraphs_{\Sigma_g}^Z \to \HH_g
		$$ 
		commutes with the differentials.
		We hence just have to check that the partition function is equal to $Z_{triv}$ of \eqref{equ:Ztriv}.
		This step is a technical computation and will be postponed and conducted in the next subsection, see Lemma \ref{lem:partition_function} below. For now, we assume this done, and hence obtain a map of collections of dg vector spaces 
		\[
		F: \BVGraphs_{\Sigma_g}=\BVGraphs_{\Sigma_g}^{Z_{triv}} \to \HH_g.
		\]
		
		The map $F$ is a morphism of dg Hopf collections. This immediately follows from the symmetric group-equivariance of the definition, and from the ``Fubini Theorem'', Lemma \ref{lem:fubini}.
		
		Finally we need to check that the map $F$ is compatible with the $\BVc$-comodule structure.
		We hence have to check that the following diagrams commute
		\begin{equation}\label{equ:coactioncd}
		\begin{tikzcd}
		\BVGraphs_{\Sigma_g}(r+s-1)\ar{r}{F} \ar{d}{\Delta} & \HH_g(r+s-1) \ar{dd}{\Delta} \\
		\BVGraphs_{\Sigma_g}(r)\otimes \BVGraphs_2(s) \ar{d}{\mathit{id}\otimes p} & \\
		\BVGraphs_{\Sigma_g}(r)\otimes \BVc(s)  \ar{r}{F\otimes\mathit{id}}& \HH_g(r)\otimes \BVc(s)
		\end{tikzcd}\,.
		\end{equation}
		On the left-hand side we have written the $\BVc$-coaction as the composition of the natural $\BVGraphs_2$-coaction and the projection $p:\BVGraphs_2\to \BVc$.
		In fact, it suffices to show the diagrams obtained from the above by the composition with the projection from $\BVc$ to the space of (cooperadic) cogenerators of $\BVc$.
		Concretely, $\BVc$ is cogenerated by the ``co-BV operator'' $\BVDelta$ in $\BVc(1)$ of degree 1 and the coproduct of degree 0 in $\BVc(2)$, and the ``forgetful'' cooperation in arity $0$.
			\ricardoline{We should be precise at least once in the paper about arity 0, both for $\BVc$ and for $\FFM_2$. 
			Maybe in the Generalities section.}
		Let us begin with the first case, for which we consider the diagram above for $s=1$, and we are only interested in those terms that produce the BV cogenerator in the $\BVc(1)$-factor.
		Consider concretely a graph $\Gamma\in \BVGraphs_{\Sigma_g}(r)$, and say we take the $\BVc$-coaction at the first vertex (w.l.o.g.). 
		Denote by $\Delta_1$ the composition of this coaction with the projection of $\BVc(1)$ to the degree 1 cogenerator. By definition, if the first vertex in $\Gamma$ has a tadpole than
		\[
		\Delta_1(\Gamma) = \pm \Gamma' \otimes \Delta,
		\]
		where $\Gamma'$ is obtained from $\Gamma$ by removing the tadpole. Otherwise, $\Delta_1(\Gamma)=0$.
		On the other hand, $\Delta_1F(\Gamma)$ operates, separately on each stratum, by removing a ``tadpole'' $\theta_1$ as well, and otherwise sending the form to zero. Eventually, this means we only need to check the following properties of our map $F$:
		\begin{itemize}
			\item If $\Gamma$ has no tadpole at the external vertex $1$, then $F(\Gamma)$ has no terms containing $\theta_1$. This is obviously true: The only term introducing $\theta_1$'s is $\pi_1^*\eta$ above.
			\item On each stratum, the form $\eta$ to which the tadpole is sent, contains $\theta_1$ with coefficient 1. This is also clearly true, see section \ref{sec:cohom reps}.
		\end{itemize}
		
		Next we consider the coaction of the coproduct cogenerator.
		To this end we consider again diagram 
		\eqref{equ:coactioncd}, for $s=2$.
		Furthermore we project $\BVc(2)$ to the degree 0 part, or equivalently only pick out terms in the coactions that contribute to this part.
		Again we start with a graph $\Gamma\in \BVGraphs_{\Sigma_g}(r+1)$, and say we take the $\BVc$-coaction at the first two vertices (w.l.o.g.).
		Denote by $\Delta_{12}$ this coaction followed by the projection to the coproduct cogenerator in $\BVc(2)$.
		On the graphical side, we then have
		\[
		\Delta_{12}(\Gamma) = \Gamma_{12},
		\]
		where $\Gamma_{12}$ is obtained from $\Gamma$ by fusing external vertices 1 and 2 into one vertex.
		In particular, if $\Gamma$ had an edge connecting vertices 1 and 2, then the graph $\Gamma$ has a tadpole at the fusion vertex.
		Let us first assume that $\Gamma$ has no edge between vertices 1 and 2.
		
		On the other side, $\Delta_{12}F(\Gamma)$ is obtained by applying the respective coaction to the various factors $\BVc_{(g,g)}$ or $\BVc_C$ in the totalization. 
		This means, in the geometric language above, that one considers only strata in which points 1 and 2 are ``infinitesimally close'', i.e., in the same copy of $\BVc_{(g,g)}$ or $\BVc_C$ in the tensor product.
		But the fiber integral above then sees 1 and 2 as the same point, so that our cocomposition (and projection) indeed is given by $F(\Gamma_{12})$ as desired.
		If $\Gamma$ does have an edge between vertices 1 and 2 not much changes. Neither the edge nor the tadpole contribute in the fiber integration, since they are not incident to internal vertices.
		We merely have to check that $\Delta_{12}\omega=\eta$, which follows from \eqref{equ:delta_omega_eta}.
		
		Finally we look at the ``forgetful'' coaction, and consider diagram \eqref{equ:coactioncd} for $s=0$.
		The corresponding cooperation raises the arity by 1. On the graphical side, it acts on $\Gamma\in \BVGraphs_{\Sigma_g}(r-1)$ by adding an $r$-th external vertex of valency 0 to $\Gamma$. On $\alpha\in \HH_g(r-1)$ the corresponding action is notationally awkward to define, but there is a natural way to extend the ``form'' $\alpha$ to an element of $\HH_g(r-1)$ that just ``does not depend'' on the $r$-th point. It is also clear that the map $F$ respects these operations, since the fiber integrals do not interact with the additional $r$-th vertex.
		
		Overall, we have shown that $F$ is a well-defined map of dg Hopf $\BVc$-comodules.
	\end{proof}

If we look at the outline of the proof of Theorem \ref{thm:main}, then to actually get a proof for $g\geq 1$ the only thing left to check is the following.

\begin{proposition}\label{prop:Fqiso}
	The map $F$ is a quasi-isomorphism.
\end{proposition}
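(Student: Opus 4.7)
The plan is to show $H^{*}(F)$ is an isomorphism in each arity by a dimension-counting argument. By Proposition~\ref{prop:thinmodel}, $H^{*}(\HH_g(r)) \cong H^{*}(\FFM_{\Sigma_g}(r))$, and by the established quasi-isomorphism $\BVGraphs_{\Sigma_g}^{Z_{triv}} \xrightarrow{\simeq} \Mo_g$ of~\cite{Idrissi2018b, CamposWillwacher2016} together with Bezrukavnikov's computation $H^{*}(\Mo_g) \cong H^{*}(\FFM_{\Sigma_g})$, the source has the same cohomology in each arity and degree. Both are finite-dimensional, so it suffices to prove $H^{*}(F)$ is surjective.

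Because $F$ is a map of dg Hopf $\BVc$-comodules, $H^{*}(F)$ is an algebra map that commutes with the cooperations induced by the forgetful maps $\pi_i, \pi_{ij}$ of~\eqref{equ:piFFMSigma}. Since $H^{*}(\FFM_{\Sigma_g}(r))$ is multiplicatively generated by the pullbacks $\pi_i^{*}[x]$ with $[x]\in H^{*}(\FFM_{\Sigma_g}(1))$ and $\pi_{ij}^{*}[y]$ with $[y]\in H^{*}(\FFM_{\Sigma_g}(2))$, surjectivity in arity $r$ follows from surjectivity in arities $1$ and $2$. In those arities, one uses that $F$ restricted to graphs with no internal vertices is just the map $A$ of Lemma~\ref{lem:def-A}, so the images under $H^{*}(F)$ of the one-vertex-decorated graphs, one-edge graphs and tadpole graphs are the classes of $\pi_j^{*}\alpha$, $\pi_{ij}^{*}\omega$, $\pi_i^{*}\eta$ constructed in Section~\ref{sec:cohom reps}, all expressed via the explicit cocycles $a^k, b^k, \nu, \omega, \eta \in \HH_g$.

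The main obstacle is verifying that these explicit cocycles represent the \emph{correct} cohomology generators of $\FFM_{\Sigma_g}(1), \FFM_{\Sigma_g}(2)$ under the quasi-isomorphism $\HH_g \simeq \Omega^{*}(\FFM_{\Sigma_g})$ from Proposition~\ref{prop:thinmodel}. For $r = 1$, this amounts to checking that $[a^k], [b^k], [\nu] \in H^{*}(\HH_g(1))$ form the standard basis of $H^{*}(\Sigma_g)$ and that $[\eta]$ represents the Euler class of the frame bundle; these identifications can be made by hand from the stratum-by-stratum description of $\HH_g$ (Section~\ref{sec:HHg_notation}) and from the presentation of $\BVc_{(g,g)}$ in Section~\ref{sec:BVcnotation}. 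For $r = 2$, the key identity is the one controlling the propagator: Lemma~\ref{lem:prop} guarantees that $[\omega]\in H^{*}(\HH_g(2))$ is cohomologous to the standard propagator class of $\FFM_{\Sigma_g}(2)$ (it has the right differential and the correct restriction to the fiberwise boundary, matching $\eta$). Combining these low-arity identifications with multiplicativity of $H^{*}(F)$ gives surjectivity, and the dimension count then establishes that $F$ is a quasi-isomorphism.
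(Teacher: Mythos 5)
Your overall strategy---both sides are known to have the same finite-dimensional cohomology, so it suffices to prove surjectivity of $H^{*}(F)$, which you then try to reduce to arities $1$ and $2$ where explicit representatives can be matched---agrees with the paper in its first and last steps, but the reduction in the middle contains a genuine gap. You assert that $H^{*}(\FFM_{\Sigma_g}(r))$ is \emph{multiplicatively generated} by the pullbacks $\pi_i^{*}H^{*}(\FFM_{\Sigma_g}(1))$ and $\pi_{ij}^{*}H^{*}(\FFM_{\Sigma_g}(2))$. This is not established anywhere in the paper, and it is false for $g\geq 1$. What is true is that the cochain model $\Mo_g(r)$ is generated as an algebra by pullbacks of \emph{cochains} from arities $1$ and $2$; it does not follow that the cohomology ring is generated by pullbacks of \emph{cohomology classes}. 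The obstruction is exactly the ``puncture classes'': in the forgetful fibration $\FFM_{\Sigma_g}(r+1)\to\FFM_{\Sigma_g}(r)$ the fiber is a framed point on $\Sigma_g$ with $r$ punctures, whose $H^1$ has rank $2g+r$ (cf.\ \eqref{equ:HXfiber}), and the $r-1$ classes coming from the punctures are not restrictions of classes pulled back from lower arities. For instance, for $r=3$ the element of $E_\infty^{2,1}$ of the Serre spectral sequence represented by $(\omega_{13}-\omega_{23})\nu_1+(\text{corrections})$ survives to a nonzero class in $H^{3}$ (its transgression $\nu_1-\nu_2$ pairs with $\nu_1$ into $H^4$ of an open $4$-manifold, which vanishes), yet its fiber-degree-one component is the puncture class $p_1-p_2$, which does not appear in the fiber-degree-one component of any product of classes pulled back from arities $1$ and $2$. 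So surjectivity in arities $1$ and $2$ does not imply surjectivity in arity $r$, and your argument does not close.

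The paper circumvents precisely this issue by inducting on $r$ along the forgetful fibration: it compares the induced map on the homotopy cofibers $\BVGraphs_{\Sigma_g}(r+1)\hotimes_{\BVGraphs_{\Sigma_g}(r)}\Q \to \HH_g(r+1)\hotimes_{\HH_g(r)}\Q$, where the puncture classes do admit explicit representatives ($e_k-e_2$ and $e'_k-e'_2$, which live in the bar construction and are \emph{not} cocycles in $\HH_g(r+1)$ itself), together with the framing class, for which the computation of Section~\ref{sec:integral_example} is needed. Your identification of the arity-$1$ and arity-$2$ generators and of the propagator is consistent with the paper's induction base, but to repair the proof you would need to replace the multiplicative-generation claim by an argument of this relative type (or an independent proof that the cohomology rings are generated in arities $1$ and $2$, which would be a new and, as indicated above, false statement).
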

\begin{proof}
We already know that $H(\BVGraphs_{\Sigma_g})=H(\HH_g)=H(\FFM_{\Sigma_{g}})$. We only need to check that the maps
 \[
 F: \BVGraphs_{\Sigma_g}(r) \to \HH_g(r)
 \] 
induce isomorphisms on cohomology.
We do this by induction on $r$.
For $r=1$ the cohomology of both sides is the cohomology of the frame bundle 
\[
H(\FFM_{\Sigma_g}(1))
=
\begin{cases}
\Q \oplus \Q^{2g}[-1] \oplus \Q[-1] \oplus \Q[-2] & \text{for $g=1$} \\
\Q \oplus \Q^{2g}[-1] & \text{otherwise.}
\end{cases}
\]
Suppose first that $g\geq 2$.
Then representatives of the above classes in $\HH_g(1)$ are given by the forms $1$, $a^j$ and $b^j$ from section \ref{sec:cohom reps}.
We have similar 1-vertex graph representatives in $\BVGraphs_{\Sigma_g}$ given by the diagrams
\begin{align*}
&\begin{tikzpicture}[baseline=-.65ex]
\node[ext] (v) {$\scriptstyle 1$};
\end{tikzpicture}
&
&\begin{tikzpicture}[baseline=-.65ex]
\node[ext, label=90:{$a^j$}] (v) {$\scriptstyle 1$};
\end{tikzpicture}
&
&\begin{tikzpicture}[baseline=-.65ex]
\node[ext, label=90:{$b^j$}] (v) {$\scriptstyle 1$};
\end{tikzpicture}\, .
\end{align*}
It is clear from the definition of $F$ that it sends the latter set of cohomology generators to the former.
Next consider the torus, $g=1$. 
Then the additional two cohomology classes are represented in $\HH_g'(1)$ by the forms $\eta$ and $\nu$ from section \ref{sec:cohom reps}.
Corresponding representatives in $\BVGraphs_{\Sigma_g}(1)$ are given by the cocycles
\begin{align*}
&\begin{tikzpicture}[baseline=-.65ex]
\node[ext] (v) {$\scriptstyle 1$};
\draw (v) edge[loop] (v);
\end{tikzpicture}
+
\begin{tikzpicture}[baseline=-.65ex]
\node[ext] (v) at (0,0) {$\scriptstyle 1$};
\node[int, label=90:{$\scriptstyle 2\nu-2a^1b^1$}] (i) at (0,.7) {};
\draw (v) edge (i);
\end{tikzpicture}
&
&\begin{tikzpicture}[baseline=-.65ex]
\node[ext, label=90:{$\nu$}] (v) {$\scriptstyle 1$};
\end{tikzpicture}
\end{align*}
Again, it is immediate that the map $F$ sends the generators onto each other. Note in particular that the second graph above is sent to zero by $F$ (in fact already by the map $A$) since we have that $2\nu-2a^1b^1=0$ in $\HH_1(1)$.

Next consider the induction step $r\to r+1$, for $r\geq 1$.
Geometrically, note that we have the fibration 
\[
X \to \FFM_{\Sigma_g}(r+1) \to \FFM_{\Sigma_g}(r),
\]
by forgetting the first point. The fiber $X$ can be identified with the compactified configuration space of 1 framed point in the surface with $r$ points removed, $\Sigma_g\setminus *^{\sqcup r}$.
Its cohomology is 
\begin{equation}\label{equ:HXfiber}
H(X) = \Q \oplus \Q[-1]^{2g+r}.
\end{equation}
By the previous Proposition we have the following commutative diagram of dgcas
\[
\begin{tikzcd}
\BVGraphs_{\Sigma_g}(r) \ar{r}{F}\ar{d}& \HH_g(r) \ar{d}\\
\BVGraphs_{\Sigma_g}(r+1)\ar{r}{F} & \HH_g(r+1).
\end{tikzcd}\,
\]
We assume by induction that the upper horizontal arrow is a quasi-isomorphism.
To check that the lower horizontal arrow is one as well it suffices to check that the induced map between the vertical homotopy cofibers 
\[
\BVGraphs_{\Sigma_g}(r+1)\hotimes_{\BVGraphs_{\Sigma_g}(r)} \Q
\to 
\HH_g(r+1)\hotimes_{\HH_g(r)} \Q
\]
is a quasi-isomorphism.
We realized the homotopy tensor products using the bar resolutions as usual.
Both sides have cohomology $H(X)$ as in \eqref{equ:HXfiber}, we just need to check that sets of representatives are mapped to each other.
For the right-hand side we may take the following cohomology generators:
\begin{itemize}
	\item The first $1+2g$ generators are 
	\[
	1,\pi_1^*a^j, \pi_1^*b^j\in \HH_g(r+1)\subset \HH_g(r+1)\hotimes_{ \HH_g(r)} \Q,
	\] 
	for $j=1,\dots,g$.
	\item Define the elements
	\[
	e_{j}
	=
	\pi_{1j}^*\omega
	-
	(1, \pi_j^*\nu)
	+
	\sum_{q=1}^g
	(\pi_1^*a^q, \pi_j^*b^q)
	+
	(\pi_1^*b^q, \pi_j^*a^q)
	\in \HH_g(r+1)\hotimes_{\HH_g(r)} \Q.
	\]
	Then $de_j=\pi_1^* \nu$.
	
	We can then take $e_k-e_2$ for $k=3,\dots,r+1$ as representatives for $r-1$ of the missing $r$ cohomology classes. (Geometrically, these are the classes from the punctures.)
	
	Finally, there is one class from the framing. For $g=1$ it is represented by $\pi_1^*\eta$.
	For $g\geq 2$ that latter class is not closed but satisfies
	\[
	\pi_1^*\eta =-2\sum_{j=2}^g a^jb^j
	\]
	We will define in section \ref{sec:integral_example} below an explicit degree $1$ element $\alpha\in \HH_g(1)$ so that (see \eqref{equ:dalpha_example})
	\[
	d\alpha = (2g-2)\nu-2\sum_{j=2}^g a^jb^j.
	\]
	Then the missing cohomology representative can be taken to be $\pi_1^*\eta -\alpha- (2-2g)e_2$.
\end{itemize}  
The corresponding representatives in $\BVGraphs_{\Sigma_g}(r+1)\hotimes_{\BVGraphs_{\Sigma_g}(r)} \Q$ are built analogously:
\begin{itemize}
	\item 
	Define the graphs 
	\[
	\gamma_\beta^{i,r}=
	\begin{tikzpicture}[baseline=-.65ex]
	\node[ext] (v1) at (0,0) {$\scriptstyle 1$};
	\node  at (.7,0) {$\cdots$};
	\node[ext,label=90:{$\beta$}] (vi) at (1.4,0) {$\scriptstyle i$};
	\node  at (2.1,0) {$\cdots$};
	\node[ext] (vr) at (2.8,0) {$\scriptstyle r$};
	\end{tikzpicture} \in \BVGraphs_{\Sigma_g}(r)
	\]
	For the degree 0 and 1 classes coming from the cohomology of $M$ we can just take
 $1,\gamma_{a_j}^{1,r+1},\gamma_{b_j}^{1,r+1}$ for $j=1,\dots,g$.
	\item Consider the graphs (with an edge between vertices $i$ and $j$)
	\[
	e_{ij}
	=
	\begin{tikzpicture}[baseline=-.65ex]
	\node[ext] (v1) at (0,0) {$\scriptstyle 1$};
	\node at (.7,0) {$\cdots$};
	\node[ext] (vi) at (1.4,0) {$\scriptstyle i$};
	\node at (2.1,0) {$\cdots$};
	\node[ext] (vj) at (2.8,0) {$\scriptstyle 1$};
	\node at (3.5,0) {$\cdots$};
	\node[ext] (vr) at (4.2,0) {$\scriptstyle r+1$};
	\draw (vi) edge[bend left] (vj);
	\end{tikzpicture}\,.
	\]
	We define the elements in the derived tensor product
	\[
	e_j' =e_{1j}
	-
	(1, \gamma_\nu^{j,r})
	-\sum_{q=1}^g
	(\gamma^{1,r+1}_{a_q}, \gamma^{j-1,r}_{b_q})
	-
    (\gamma^{1,r+1}_{b_q}, \gamma^{j-1,r}_{a_q}).
	\]
	They satisfy $de_j'=\gamma_{\nu}^{1,r+1}$.
	Then we can write down the cohomology generators $e_k'-e_2'$, for $k=3,\dots r+1$.
	
	Next define the sum of graphs
	\[
	\eta'=
	\begin{tikzpicture}[baseline=-.65ex]
	\node[ext] (v1) at (0,0) {$\scriptstyle 1$};
	\node[ext] (v2) at (.7,0) {$\scriptstyle 2$};
	\node at (1.4,0) {$\cdots$};
	\node[ext] (vr) at (2.1,0) {$\scriptstyle r+1$};
	\draw (v1) edge[loop] (v1);
	\end{tikzpicture}
	\, +
	\begin{tikzpicture}[baseline=-.65ex]
	\node[ext] (v) at (0,0) {$\scriptstyle 1$};
	\node[int, label=90:{$\scriptstyle 2\nu-2a^1b^1$}] (i) at (0,.7) {};
	\draw (v) edge (i);
	\node[ext] (v1) at (0,0) {$\scriptstyle 1$};
	\node[ext] (v2) at (.7,0) {$\scriptstyle 2$};
	\node at (1.4,0) {$\cdots$};
	\node[ext] (vr) at (2.1,0) {$\scriptstyle r+1$};
	\end{tikzpicture}
	\,.
	\]
	For the last cohomology generator we can then take $\eta'-(2-2g)e_2'$.
\end{itemize}
By similar arguments as in the induction base before, we then check that $F$ indeed sends the above sets of generators onto each other.
In particular, for the ``framing'' class this uses the computation of section \ref{sec:integral_example}.
\end{proof}

\begin{remark}\label{rem:Fvanishing}
	We note that the integral $F(\Gamma)$ is zero for all but a very special subset of graphs. 
	Concretely, note that in order for the integral \eqref{equ:int_def_sector} to have a chance to be nonzero, every one of the $k$ point integrated out must come with a corresponding form piece in the longitudinal coordinates $dt_j^{(i)}$. However, looking at the ``Feynman rules'' \eqref{equ:feynman_rules} we see that the only terms that can contribute such a form are in fact the decorations $a^j$, or $\nu$ at the internal vertex. Hence $F(\Gamma)=0$ unless every internal vertex is decorated by exactly one $a^j$ or $\nu$.
	
	Furthermore, if a graph $\Gamma$ has an internally connected component of loop order $s\geq 2$, then also $F(\Gamma)=0$.
	The reason is, geometrically speaking, that the say $k$ internal vertices correspond to a $2k$-dimensional space to be integrated over. However, if we count the form degrees that depend only on those vertices we get at least
	\[
	k+k+s-1>2k,
	\]
	with the first $k$ being from the decorations, and $k+s-1$ the number of edges internal to that component.
	By the same argument line, we also see that in an internally connected component, at most one vertex can have decorations of degree $2$ to have $F(\Gamma)\neq 0$, and if so, the component must be an internal tree.
	
	Furthermore, if two internal vertices that are connected are decorated by $a_i$ and $a_j$ for different $i,j\geq 2$, then also $F(\Gamma)=0$. This is because the form associated to $a_j$ is concentrated on the $j$-th handle and the form $\omega$ associated to the edge between our vertices vanishes when both involved vertices are on different handles with indices $\geq 2$ (strata $C_1C_2$ above). 
	
	There are also certain more obvious constraints, for example if a single vertex is decorated by $a_1a_2$ then $F$ vanishes since $a_1a_2=0$ in $\HH_g(1)$.
\end{remark}

\subsection{Evaluation of the partition function and vanishing results}
	The remaining step is the following.
	\begin{lemma}\label{lem:partition_function}
		Let $\Gamma\in G_{H_g}$ be a connected graph with $k$ vertices.
		Then 
		\[
		\int_k A(\Gamma) = Z_{triv}(\Gamma).
		\]
	\end{lemma}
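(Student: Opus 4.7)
My plan is to split on the number of vertices $k$ of the connected graph $\Gamma$.

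\textbf{Case $k=1$.} Here $\Gamma$ is a single vertex with decoration $\alpha = \alpha_1\cdots\alpha_s \in \bar H_g$ (internal vertices of $G_{H_g}$ carry no tadpoles), so $A(\Gamma) = \pi_1^*\alpha$. I will compute $\int_1\pi_1^*\alpha$ directly by summing over the $g$ strata in which the single point lies on one of the handles, using $\p(\omega_{1*})=1$, $\int_0^1 dt=1$, and the handle-by-handle formulas of Section \ref{sec:cohom reps} for $a^k,b^k,\nu$. Only the volume representatives $\nu$ and $a^k b^k$ produce a nonzero contribution (each equal to $1$), while any other $\alpha$ of degree $\neq 2$, or of degree $2$ but not cohomologous to the volume class, yields $0$. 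This matches $Z_{triv}(\Gamma) = \int_{\Sigma_g}\alpha$ on the nose.

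\textbf{Case $k \geq 2$.} The target is $\int_k A(\Gamma)=0$. By Remark \ref{rem:Fvanishing} one may assume $\Gamma$ has loop order $\le 1$, each vertex carries exactly one $a^j$- or $\nu$-decoration (and no $\nu$ in the unicyclic case), and the adjacency constraints are satisfied, so that each vertex $v$ is pinned to a specific handle $h(v)$ by its decoration. If $\Gamma$ has a leaf $\ell$, the integrand depends on the handle-coordinate $t_\ell$ only through the polynomial coefficient of $\omega^{(\ell)}$ coming from the unique edge at $\ell$; summing over packet orderings of $\ell$ amounts to integrating $t_\ell$ over $[0,1]$, and I will verify case by case from \eqref{equ:omega_B12}, \eqref{equ:omega_C12}, \eqref{equ:omega_B1C2}, \eqref{equ:omega_B2C1} that $\int_0^1$ of each such coefficient vanishes. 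The one sub-case where the incident edge supplies no $\omega^{(\ell)}$ at all (leaf on a handle $\ge 2$ with parent on handle $1$) is immediate, since the leaf's $a^{j(\ell)}$-decoration also supplies no $\omega^{(\ell)}$ and the integrand vanishes from the start.

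The remaining case is that $\Gamma$ is a pure cycle of length $k$. The handle constraints (edges between two distinct handles both of index $\ge 2$ make $\omega=0$, and cross-handle edges provide no $\omega^{(\cdot)}$ on the handle-$\ge 2$ side) force all cycle vertices onto a single handle $h$. The requirement that each vertex receive exactly one $\omega^{(v)}$ from its two adjacent edges forces a choice of cyclic orientation (CCW or CW), and the two orientations contribute to $\int_k A(\Gamma)$ with relative sign $(-1)^{k-1}$ from the cyclic permutation of the degree-$1$ elements $\omega^{(v_i)}$. For $h=1$, the coefficient $P(s;t) = (s-t) - \tfrac12 \mathrm{sign}(s-t)$ is antisymmetric, so the CW integrand equals $(-1)^k$ times the CCW integrand and the pair cancels pointwise: $[(-1)^{k-1}+(-1)^k]\prod P = 0$. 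For $h=j\ge 2$, the coefficient $Q(s;t) = -t+\mathbb{1}[s<t]$ is not antisymmetric, but by reversal symmetry of the integration measure the CCW and CW integrals are equal, so their sum is $[(-1)^{k-1}+1]\int\prod Q(t_{i+1};t_i)$; this vanishes by the coefficient when $k$ is even, while for $k$ odd I expand $Q = P + (\tfrac12 - s)$ to reduce $\int\prod Q$ to $\int\prod P$ (all intermediate expansion terms are killed because every nonempty proper subset $S$ produces a ``broken cycle'' with an arc-endpoint whose coordinate appears in only one $P$-factor, and the $S = [k]$ term factors as $\prod_j \int_0^1(\tfrac12 - t_j)\,dt_j = 0$), and then invoke reversal symmetry together with $P(s;t)=-P(t;s)$ to conclude $\int\prod P = 0$ for odd $k$.

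The main obstacle I expect is the careful bookkeeping of signs and stratum formulas in the pure-cycle case, where the interplay of the $\omega$-permutation sign, coefficient (anti)symmetries, reversal symmetry of the measure, and the expansion-plus-leaf argument must fit together consistently. All the computational inputs used—$\int_0^1 P(s;t)\,ds = \int_0^1 Q(s;t)\,ds = \int_0^1(-\tfrac12 + t)\,dt = 0$ and the reversal-antisymmetry of $\prod P(t_{i+1};t_i)$ for odd $k$—are elementary polynomial manipulations over $\Delta^1$, so no new ingredients beyond the propagator formulas of Lemma \ref{lem:prop} are needed.
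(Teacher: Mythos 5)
Your proposal is correct and follows the same overall reduction as the paper: the $k=1$ case by direct evaluation, then Remark~\ref{rem:Fvanishing} plus degree counting to restrict to trees with one $\nu$-decoration or unicyclic graphs with only $a^j$-decorations, then a leaf-integration argument (this is exactly the content and proof of Lemma~\ref{lem:vanishing}) to kill everything except pure cycles confined to a single handle. Where you genuinely diverge is the final pure-cycle step. The paper disposes of even $k$ by observing that the graph itself vanishes in $G_{H_g}$ by symmetry, and of odd $k$ by the cube involution $I:(t_1,\dots,t_k)\mapsto(1-t_1,\dots,1-t_k)$, which fixes the integrand (since $I^*a^j=-a^j$ and $I^*\omega=-\omega$, giving $(-1)^{2k}=1$) while reversing the orientation of $[0,1]^k$. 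You instead split the integrand into the two orientation terms (the two ways of matching each edge to the endpoint receiving its $\omega_{v*}$-factor), use the antisymmetry of the handle-$1$ coefficient $P(s;t)=(s-t)-\tfrac12\operatorname{sign}(s-t)$ for a pointwise cancellation, and for handles $j\ge 2$ combine reversal symmetry with the expansion $Q=P+(\tfrac12-s)$ and the broken-cycle/vanishing-moment argument. I checked the key inputs ($\int_0^1P(s;t)\,ds=\int_0^1Q(s;t)\,ds=0$, the sign $(-1)^{k-1}$ from cyclically permuting the odd generators $\omega_{v*}$, and the vanishing of all proper cross terms in the expansion), and they are all right. Your route is longer and requires more careful bookkeeping, but it is more self-contained: it proves the vanishing of the integral directly in all cases, including even $k$, where the paper's argument is the rather terse appeal to graph-complex symmetry. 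The paper's involution trick buys a uniform, computation-free treatment of all handles at once for odd $k$; you may want to note that it also recovers your handle-$\ge 2$ case without the expansion argument.
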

	In words, the integral is zero unless $\Gamma$ consists of exactly one vertex, and in that case the integral is just the pairing of the decorations at that vertex.
	
	Before we show the result, let us show the following.
	\begin{lemma}\label{lem:vanishing}
		We have that $F(\Gamma) = 0$ for graphs $\Gamma$ that contain one of the following patterns:
		\begin{itemize}
			\item An internal vertex with exactly one incoming edge and no other decoration.
			\[
			\begin{tikzcd}
			\node[int] (v) at (0,0) {};
			\draw (v) edge +(-.5,-.5); 
			\end{tikzcd}
			\]
			\item An internal vertex with exactly one incoming edge and one decoration
			\[
			\begin{tikzpicture}
			\node[int, label=90:{$\beta$}] (v) at (0,0) {};
			\draw (v) edge +(-.5,-.5); 
			\end{tikzpicture}
			\]
			\item An internal vertex with exactly two edges and no decoration
			\[
			\begin{tikzpicture}
			\node[int] (v) at (0,0) {};
			\draw (v) edge +(-.5,-.5) edge +(.5,.5); 
			\end{tikzpicture}
			\]
		\end{itemize} 
	\end{lemma}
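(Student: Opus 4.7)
The plan is to handle the three patterns in turn. The first and third are immediate from Remark~\ref{rem:Fvanishing}: in both, the flagged internal vertex carries no decoration at all, and the remark already forces $F(\Gamma)=0$ under this condition. Indeed, the combinatorial fiber integral can only be nonzero if each integrated-out vertex is accompanied by a longitudinal form $dt_j^{(i)}$, and by the Feynman rules~\eqref{equ:feynman_rules} the unique source of such a form at an internal vertex is a decoration of type $a^j$ or $\nu$ there.

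The substantive case is the second pattern: a univalent internal vertex $v$ decorated by a single $\beta\in\bar H_g$, joined to some neighbor $w$ by one edge. Remark~\ref{rem:Fvanishing} again restricts attention to $\beta\in\{a^1,\dots,a^g,\nu\}$. By Fubini (Lemma~\ref{lem:fubini}) it suffices to show the vanishing of the local factor $\int_{1}(\pi_v^{*}\beta\wedge \pi_{vw}^{*}\omega)$ obtained by integrating out $v$ alone. Since $a^j$ is supported on the $j$-th handle and $\nu=a^1b^1$ on the first handle, the support of $\beta$ forces $v$ onto a prescribed handle, and only strata in which $v$ sits on that handle can contribute.

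From there the argument becomes a stratum-by-stratum inspection based on the location of $w$, using the explicit propagator formulas from Lemma~\ref{lem:prop} and the fact that $\partial:\BVc_C(1)\to\Q$ selects the cogenerator $\omega_{v*}$. When $w$ lies on a handle disjoint from $v$'s handle (or in the bulk in a configuration incompatible with the support of $\beta$), inspection of \eqref{equ:omega_C2}, \eqref{equ:omega_B1C2}, or \eqref{equ:omega_B2C1} shows that the $\BVc_C$-component at $v$ reduces to the unit, so $\p_v$ annihilates it. When $v$ is on the first handle and $w$ is in the bulk, the coefficient of $\omega_{v*}$ in~\eqref{equ:omega_B2} is the affine function $-\tfrac12+t_v$, whose integral over $[0,1]$ is zero; the case $\beta=\nu$ is handled similarly after using anticommutativity to kill the $\omega_{v*}$-factor of the propagator against the $\omega_{v*}$-factor coming from $\nu = a^1 b^1$. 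Finally, when $v$ and $w$ sit on the same handle in distinct packets, one sums the contributions from the two orderings $t_v<t_w$ and $t_v>t_w$ using \eqref{equ:omega_B12} or \eqref{equ:omega_C12}; the resulting integrals of an affine function of $t_v$ over $[0,t_w]$ and $[t_w,1]$ come out equal and opposite, hence cancel.

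The main obstacle is purely organizational: one must track the strata of Section~\ref{sec:HHg12_example_strata} and keep straight, in each, which side of the propagator carries the cogenerator $\omega_{v*}$ selected by $\p_v$. No analytic difficulties arise, since every relevant integral is that of a polynomial of degree at most two over an interval of the form $[0,1]$, $[0,t_w]$, or $[t_w,1]$, and the crucial cancellations in the last regime follow from the $S_2$-symmetry of $\omega$ established in Lemma~\ref{lem:prop}.
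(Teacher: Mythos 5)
Your proposal is correct and follows essentially the same route as the paper: the first and third patterns are dispatched by Remark~\ref{rem:Fvanishing}, and the second is reduced via Lemma~\ref{lem:fubini} to the one-edge graph $\Gamma_\beta$ with $\beta\in\{a^j,\nu\}$, whose vanishing is then checked stratum by stratum using the explicit propagator formulas (unit factors killed by $\p$, affine integrands integrating to zero over $[0,1]$, and cancellation between the two orderings on a common handle). The only difference is presentational — the paper organizes the case analysis by the location of the external neighbor rather than of $v$ — and all your individual verifications match the ones carried out there.
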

	\begin{proof}
		The first and third case immediately follow by Remark \ref{rem:Fvanishing} above that graphs with internal vertices without decorations by $a^j$ or $\nu$ have vanishing weight.
		
		For the second case, due to Lemma \ref{lem:fubini}, it suffices to check that  $F(\Gamma_{\beta})=0$ for the graph 
		\[
		\Gamma_{\beta}=
		\begin{tikzpicture}[baseline=-.65ex]
		\node[ext] (v) at (0,0) {$\scriptstyle 1$};
		\node[int,label=90:{$\beta$}] (w) at (0,1) {};
		\draw (v) edge (w);
		\end{tikzpicture}\, ,
		\]
		with $\beta=a^j$ or $\beta=\nu$.
		
		We need to consider $F(\Gamma_{\beta})_{\sS}$ for every stratum $\sS$.
		Let us denote by 1 the ``fixed'' point on the base, and by 2 the ``moving'' point whose position we integrate over, corresponding to the internal vertex in the graph.
		First, suppose point 1 is in the bulk. 
		Then, the only nontrivial contributions come from stratum $B_2$ if $\beta=a^1$ or $\beta=\nu$, or strata of type $C_2$ (with point 2 being on the $j$-th handle), with if $\beta=a^j$. 
		For the $B_2$ contributions we get, looking at the \eqref{equ:omega_B2} 
		\begin{align*}
		F(\Gamma_{a_1})_{\sS} &=  \int_0^1 dt_1^{(2)} (-\frac 1 2+t_1^{(2)}) =0
		&
		F(\Gamma_{\nu})_{\sS} &= (\pi_1^*f_1) \int_0^1 dt_1^{(2)} (\frac 1 2-t_1^{(2)}) =0.
		\end{align*}
		For the $C_2$ contribution we get $0$ since \eqref{equ:omega_C2} does not contribute a term $\omega_{*2}$ that would be necessary to produce a nonzero integral.
		
		Next suppose that point $1$ is on the first handle.
		In the cases $\beta=a^1$, $\beta=\nu$ we have contributions from point 2 on the first handle and they are (cf. \eqref{equ:omega_B12})
		\begin{align*}
		F(\Gamma_{a_1})_{\sS} 
		&=  \int_0^{t_1^{(1)})} dt_1^{(2)} (\frac 1 2 +t_1^{(2)} -t_1^{(1)}) 
			+\int_{t_1^{(1)}}^1dt_1^{(2)}(-\frac 1 2 +t_1^{(2)} -t_1^{(1)})
			=\frac 1 2 (t_1^{(1)}-(t_1^{(1)})^2 -t_1^{(1)}+(t_1^{(1)})^2)=0
		\\
		F(\Gamma_{\nu})_{\sS} &= (\pi_1^*f_1) \left(
		\int_0^{t_1^{(1)})} dt_1^{(2)} (-\frac 1 2 -t_1^{(2)} +t_1^{(1)})
		 +
		 \int_{t_1^{(1)}}^1dt_1^{(2)}(\frac 1 2 -t_1^{(2)} +t_1^{(1)})
		\right)=0.
		\end{align*}
		
		If $\beta=a^j$, $j\geq 2$, we do not get any contributions, because there is no term $\omega_{*2}$ in formula \eqref{equ:omega_B1C2}.
		
		Finally suppose that point $1$ is on the $j$-th handle, $j\geq 2$.
		If $\beta=a^1$ we get the contribution (cf. \eqref{equ:omega_B1C2})
		\[
		\int_0^1dt_1^{(2)} (\frac 1 2 -t_1^{(2)})=0.
		\]
		We do not get any contributions if $\beta=\nu$, by \eqref{equ:omega_B1C2}.
		Finally, for $\beta=a_j$ we have to compute (cf. \eqref{equ:omega_C12})
		\[
		\int_0^{t_j^{(1)}}dt_j^{(2)} (1-t_j^{(1)})
			-
			\int_{t_j^{(1)}}^1dt_j^{(2)} (-t_j^{(1)})
				=0.
		\]
	\end{proof}

	\begin{proof}[Proof of Lemma \ref{lem:partition_function}]
		First, if the graph $\Gamma$ has exactly one vertex it is an easy verification that the formula is correct. We just need to show that otherwise the integral is zero, so assume the connected graph $\Gamma$ has at least two vertices.
		
		The arguments of Remark \ref{rem:Fvanishing} immediately imply that $\int_k A(\Gamma) =0$ unless each internal vertices of $\Gamma$ is decorated by some $a^j$, or $\nu$. If $\Gamma$ has loop order (first Betti number) more than one, then $A(\Gamma)$ is of degree $\geq 2k+1$, and the integral zero by degree reasons.
		If $\Gamma$ is a tree, then by degree reasons at most one leaf can have decorations of degree 2. Hence at least one leaf can only have decoration of degree 1, and hence the integral is zero by Lemma \ref{lem:vanishing} above. 
		
		That leaves loop order $1$. 
		If $\Gamma$ is of loop order 1, it is expressible as a simple loop with (possibly) several trees connected to each vertex of the loop.
		Again by degree reasons and Lemma \ref{lem:vanishing} the only case that yields a potentially nonvanishing integral are 1-loop graphs of the following form
		\[
		\Gamma=
		\begin{tikzpicture}[baseline=-.65ex]
		\node (v1) at (0:1) {$\cdots$};
		\node[int, label=60:{$*$}] (v2) at (60:1) {};
		\node[int, label=120:{$*$}] (v3) at (120:1) {};
		\node[int, label=180:{$*$}] (v4) at (180:1) {};
		\node[int, label=240:{$*$}] (v5) at (240:1) {};
		\node[int, label=300:{$*$}] (v6) at (300:1) {};
		\draw (v1) edge (v2) edge (v6) (v3) edge (v2) edge (v4) (v5) edge (v6) edge (v4); 
		\end{tikzpicture}\, ,
		\]
		where $*$ is a placeholder for some $a^j$.
		Now we claim that the $j$ has to be constant on the graph, i.e., all $*$ are the same $a^j$.
		Suppose they are not. Then if two $a^i$- and $a^j$-decorated vertices are next to each other, with $i\neq j$, $i,j\geq 2$, then $A(\Gamma)=0$ as in Remark \ref{rem:Fvanishing}.
		
		If an $a^1$-decorated vertex is next to an $a^i$-decorated with $i\geq 2$ the edge between the vertices has to be replaced in the integral (essentially) by the propagator $\omega$ on stratum $B_1C_2$, see \eqref{equ:omega_B1C2}.
		In particular there is no dependence on the $a^i$-decorated vertex.
		That means the integral has a sub-integral of the second type of Lemma \ref{lem:vanishing}, and is hence zero.
		Hence we have reduced our considerations to $\Gamma$ of the following form, with all decorations the same.
		
		\[
		\Gamma=
		\begin{tikzpicture}[baseline=-.65ex]
		\node (v1) at (0:1) {$\cdots$};
		\node[int, label=60:{$a^j$}] (v2) at (60:1) {};
		\node[int, label=120:{$a^j$}] (v3) at (120:1) {};
		\node[int, label=180:{$a^j$}] (v4) at (180:1) {};
		\node[int, label=240:{$a^j$}] (v5) at (240:1) {};
		\node[int, label=300:{$a^j$}] (v6) at (300:1) {};
		\draw (v1) edge (v2) edge (v6) (v3) edge (v2) edge (v4) (v5) edge (v6) edge (v4); 
		\end{tikzpicture}\, .
		\]
		By symmetry reasons, such graphs are zero for even number of vertices $k$.
		Otherwise the relevant integral is an integral over the cube $[0,1]^k$ representing longitudinal configurations of $k$ points on the $j$-th handle,
		\[
		\int_{[0,1]^k} \alpha ,
		\]
		where the precise form of the $k$-form $\alpha$ will not be relevant.
		We consider the automorphism of the cube
		\[
		I : (t_1,\dots,t_k) \mapsto (1-t_1,\dots,1-t_k).
		\]
		One verifies (cf. \eqref{equ:ajdef}) that the pullback $I^*$ sends the forms $a_j$ to $-a_j$ and 
		the propagator $\omega$ to $-\omega$ (cf.\eqref{equ:omega_B12} and  \eqref{equ:omega_C12}).
		The net effect on the integrand is hence $I^*\alpha=(-1)^{2k}\alpha=\alpha$.
		Furthermore, $I$ changes the orientation of the cube since $k$ is odd. Hence
		\[
		\int_{[0,1]^k} \alpha = (-1)^{2k}\int_{[0,1]^k} I^* \alpha
		=
		\int_{[0,1]^k} I^* \alpha = -\int_{[0,1]^k} \alpha =0
		\]
		and we are done.
	\end{proof}
	
    \subsection{An example computation}\label{sec:integral_example}
To provide an explicit example of the fiber integral, and to fill some details in the proof of Proposition \ref{prop:Fqiso}, we will compute explicitly the element 
\[
F(\Gamma_j)\in \HH_g(1),
\]
for 
\[
\Gamma_j =
\begin{tikzpicture}[baseline=-.65ex]
\node[ext] (v) at (0,0) {$\scriptstyle 1$};
\node[int, label=90:{$\scriptstyle a^jb^j$}] (i) at (0,.7) {};
\draw (v) edge (i);
\end{tikzpicture}
\in \BVGraphs_{\Sigma_g}(1).
\]

First we have $F(\Gamma_1)=0$, since $a^1b^1=\nu$ in $\HH_g(1)$, and by Lemma \ref{lem:vanishing}.
So we only consider $j=2,\dots, g$.
We have to consider the external vertex $1$ separately on each stratum. The internal vertex is always confined to the $j$-th handle, since $a^jb^j$ is supported there.
\begin{itemize}
\item On the bulk stratum the integral to evaluate becomes (cf. \eqref{equ:omega_C2})
\[
(\omega_{1\underline j})
\int_0^1 dt_j^{(2)}
(1-t_j^{(2)})
+(\omega_{1\overline j})
\int_0^1 dt_j^{(2)}
t_j^{(2)}
-\frac 1 2 \pi_1^*f_1
=
\frac 1 2 (\omega_{1\underline j}-\omega_{1\overline j}-\omega_{1\underline 1})
\]
\item On the first handle we get (cf. \eqref{equ:omega_B1C2})
\[
(-\frac 1 2 +t_1^{(1)}) \omega_{1*}.
\]
\item On the $j$-th handle we get (cf. \eqref{equ:omega_C12})
\[
\omega_{1*}\left(
\int_0^{t_j^{(1)}}dt_j^{(2)}(-t_j^{(2)})
+
\int{t_j^{(1)}}^1dt_j^{(2)}(1-t_j^{(2)})
\right)
= (\omega_{1*})(\frac 1 2 -t_j^{(1)}).
\]
\item On the other handles the integral is zero.
\end{itemize}
Overall, we see that we get a form that satisfies 
\[
dF(\Gamma_j)= \nu - a^jb^j = F(d\Gamma).
\]

We also note that the form 
\[
\alpha=2\sum_{j=2}^g F(\Gamma_j)
\]
hence satisfies 
\begin{equation}\label{equ:dalpha_example}
d\alpha = (2g-2)\nu - 2 \sum_{j=2}^g a^jb^j.
\end{equation}

\subsection{The case $g=0$}\label{sec:g0case}
The special case of a sphere, $g=0$, can be handled by a similar, but much simpler treatment.
Here we use as the model for the configuration space of points on the sphere
\[
(\FFM_2)' \times_{\FFM_{C}}\FFM_2,
\]
where the $(-)'$ on the left shall indicate the we use the involution $I:\FFM_C\to \FFM_C^{op}$ of \eqref{equ:IFMC} to transform the natural left $\FM_{C}$-module structure to a right $\FM_{C}$-module structure.
We shall think of the resulting model for the configuration space as two spheres, the left-hand sphere and the right-hand sphere, connecting by a thin ``handle'', in which points can ``move''.
Dually, our model for this configuration space is the fat totalization  
\[
\HH_0\coloneqq (\BVc)'\hotimes_{\BVc_C} \BVc. 
\]
Similar to the above, we will construct a zigzag of quasi-isomorphisms 
\[
\Mo_{0} \leftarrow \BVGraphs_{S^2}\to \HH_0.
\]
To this end we need the following ingredients.
First, our representatives for the cohomology classes of the sphere in $\HH_0(1)$ will be $1$ and $\nu$, the latter defined on the three strata as follows.
\begin{itemize}
	\item On the strata for which the point is on the left or right sphere, we set $\nu=0$.
	\item In the handle, we set $\nu=dt \omega_{1*}$, where $t$ is the coordinate along the handle.
\end{itemize}

Since we have framings, we can provide a primitive $\eta\in \HH_0(1)$ of $\nu$.
This is defined on the three strata as follows.
\begin{itemize}
	\item On the strata for which the point is on the left or right sphere, we set 
	\[
	\eta = 
	\theta_1 \in \BVc(1)
	\]
	\item On the handle, we set 
	\[
	\eta = \theta_1 - 2 (1-t) \omega_{1*} .
	\]
\end{itemize}
We note that the presence of the involution at the left end of the handle ensures that $\eta$ is indeed continuous.
Furthermore, it is clear that $d\eta=2\nu$, as desired.

Next, we define a ``propagator'' $\omega\in \HH_0(2)$.
There are 11 different strata to consider, which we discuss alongside the definition of $\omega$ as follows.
\begin{itemize}
	\item If both points are in one (the same) sphere, we set 
	\[
	\omega=\omega_{12}
	\in \BVc(1) \, .
	\]
	If both points are on spheres, but one on the left and the other on the right, we set $\omega=0$. (4 strata)
	\item If point 1 is on the right-hand sphere and point 2 is on the handle, we set 
	\[
	\omega= t^{(2)}\omega_{2*},
	\]
	where $t^{(2)}$ is the coordinate of the point 2 on the handle, with the end $t^{(2)}=0$ corresponding to the left-hand end, and $t^{(2)}=1$ to the right-hand end.
	If point 1 is on the left-hand sphere instead, we set 
	\[
	\omega= (t^{(2)}-1) \pi_2^*f.
	\]
	Note that in either case $\omega$ does not depend on the position of point $1$. (This essentially has to be the case since $\FM_2(1)=*$.
	Furthermore, note that at the left end of the handle we take the involution \eqref{equ:IBVc1} to create a right module structure from a left module structure. This involution changes the sign of $f$, and hence we get the somewhat odd sign in the formula.
	If the roles of points 1 and 2 are interchanged, we just define $\omega$ symmetrically.
	\item Suppose both points are on the handle, with coordinates along the handle $t^{(1)}<t^{(2)}$.
	Then we set 
	\[
	\omega=t^{(1)} \omega_{1*} -(1-t^{(2)}) \omega_{2*}
	\]
	Again we extend this symmetrically to the case $t^{(1)}>t^{(2)}$.
	Together, we hence have the formula
	\begin{equation}\label{equ:omega_12_g0}
	\omega=
	(t^{(1)}  - 1_{t^{(1)}>t^{(2)}}) \omega_{1*}
	+
	(t^{(2)}  - 1_{t^{(2)}>t^{(1)}}) \omega_{1*}.
	\end{equation}
	\item Finally, suppose both points are on the handle, and infinitesimally close.
	Then we define 
	\[
	\omega= \omega_{12} -(1-t)(\omega_{1*} + \omega_{2*}),
	\]
	
\end{itemize}

One then has.
\begin{lemma}\label{lem:prop0}
	The element $\omega\in \HH_0(2)$ is well-defined and satisfies
	\[
	d\omega = \pi_1^*\nu + \pi_2^*\nu. 
	\]
\end{lemma}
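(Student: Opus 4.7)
The plan is to run the same two-step verification used in Lemma \ref{lem:prop}, but on the much simpler stratification for $g=0$: first check that the case-by-case formulas for $\omega$ glue into a continuous element of the fat totalization $\HH_0(2)$, and then verify the differential identity stratum by stratum.

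For the first step, I would enumerate the $11$ top-dimensional strata explicitly and list the codimension-one adjacencies between them. These adjacencies come from the boundaries of the simplices $\Delta^{r_1}$ parameterising longitudinal coordinates of packets on the handle: the leftmost packet hitting $t=0$ and sliding onto the left sphere, the rightmost packet hitting $t=1$ and sliding onto the right sphere, and (for $r_1=2$) two adjacent packets merging. For each adjacency I would check the appropriate continuity equation \eqref{equ:HHgcontinuity1}--\eqref{equ:HHgcontinuity3} by restricting the formula for $\omega$ on one side to the simplex boundary and comparing with the cocomposition of $\omega$ on the other side. The most delicate cases involve the $t=0$ end of the handle, where the passage from handle to left sphere requires applying the involution \eqref{equ:IBVc1} of $\BVc_C$; this sign-twist is in particular the reason the formula on the stratum with one point on the left sphere involves $t^{(2)}-1$ instead of $t^{(2)}$ in the coefficient of $\omega_{2*}$, and why the $(1-t)(\omega_{1*}+\omega_{2*})$ term appears with the sign it does in the infinitesimally close case.

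For the second step, I would compute $d\omega$ separately on each of the $11$ strata and match with $\pi_1^*\nu+\pi_2^*\nu$.
\begin{itemize}
\item On the four strata with both points on spheres, $\omega$ is either $0$ or $\omega_{12}\in \BVc$, hence closed in $\BVc$; both $\pi_i^*\nu$ also vanish there since $\nu$ is supported on the handle, so both sides are $0$.
\item On the four strata with one point on a sphere and one on the handle, the differential of a term like $t^{(2)}\omega_{2*}$ produces $dt^{(2)}\omega_{2*}$, which is exactly $\pi_2^*\nu$ on that stratum; the other $\pi_i^*\nu$ is zero since that point lies on a sphere.
\item On the two strata with both points on the handle in distinct ordered packets, differentiating \eqref{equ:omega_12_g0} extracts $dt^{(1)}\omega_{1*}+dt^{(2)}\omega_{2*}$, matching $\pi_1^*\nu+\pi_2^*\nu$.
\item On the infinitesimally close stratum $\omega_{12}$ is closed in $\BVc_C$, and differentiating the $-(1-t)(\omega_{1*}+\omega_{2*})$ piece gives $dt(\omega_{1*}+\omega_{2*})$, which matches the common-$t$ restriction of $\pi_1^*\nu+\pi_2^*\nu$.
\end{itemize}

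The main obstacle here is not conceptual but bookkeeping, with the only real subtlety concentrated at the $t=0$ end of the handle where the conversion of a left $\BVc_C$-comodule structure into a right one via the involution $I$ introduces the sign twists recorded in \eqref{equ:IBVc1}; these must be tracked carefully in every continuity check and differential computation involving that end. Once they are handled correctly, the remaining verifications are immediate and strictly simpler than those of Lemma \ref{lem:prop}.
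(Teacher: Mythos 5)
Your proposal is correct and follows exactly the approach the paper intends: the printed proof of Lemma~\ref{lem:prop0} is a one-line reference to the ``straightforward computation, analogous to the proof of Lemma~\ref{lem:prop}'', and your stratum-by-stratum continuity checks and differential computations (including the sign twists from the involution \eqref{equ:IBVc1} at the $t=0$ end of the handle) are precisely that computation, carried out correctly.
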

\begin{proof}
	This is shown by a straightforward computation, analogous to the proof is Lemma \ref{lem:prop} above.
\end{proof}

We can hence define a map $A:\BVGra_{H_0}\to \HH_0'$ as in Lemma \ref{lem:def-A}.
Then we define the map $\BVGraphs_{S^2} \to \HH_0$ just as in the case $g\geq 1$ above, see section \ref{sec:themapF}, by sending a graph $\Gamma\in \Graphs_{S^2}(r)$ with $k$ internal vertices to the fiber integral
\[
\int_k A(\Gamma).
\]
The resulting map is in fact relatively trivial due to the following observations:
\begin{itemize}
	\item As before, if $\Gamma$ has an internal vertex without $\nu$-decoration, then the fiber integral is zero.
	Hence we only need to consider graphs all of whose internal vertices are decorated by $\nu$'s.
	\item If such a graph has an edge between two internal vertices, then the resulting integral is also zero by degree reasons:,There are at least five from degrees depending on the 4 degrees of freedom of the two vertices the edge connects. 
\end{itemize}
By these observations it immediately follows that the partition function is trivial, i.e., given as in \eqref{equ:Ztriv}.
Overall, we can then proceed as in section \ref{sec:themapF} and complete the proof of Theorem \ref{thm:main} for $g=0$.

\begin{remark}
In this case it is possible (though not necessary) to fully evaluate the fiber integrals above.
In fact, by the remarks above the most general (internally connected) diagram which is not sent to zero is of the form
\[
\Gamma_r\coloneqq 
\begin{tikzpicture}[baseline=-.65ex]
\node[int,label=90:{$\nu$}] (i) at (0,1) {};
\node[ext] (v1) at (-1,0) {$\scriptstyle 1$}; 
\node (v2) at (0,0) {$\scriptstyle \cdots$}; 
\node[ext] (v3) at (1,0) {$\scriptstyle r$}; 
\draw (i) edge (v1) edge (v2) edge (v3);
\end{tikzpicture}\, .
\]
This is also easily evaluated given the explicit formula \eqref{equ:omega_12_g0} for the propagator.
The resulting form is zero when not all $r$ points are on the handle. Say (w.l.o.g.) that they have coordinates $t^{(1)}\leq \cdots\leq  t^{(r)}$.
Then the graph $\Gamma$ is sent to a form 
\[
f(t^{(1)}, \cdots, t^{(r)}) \pi_1^*f\pi_2^*f \cdots \pi_r^*f,
\]
where (cf. \eqref{equ:omega_12_g0})
\begin{align*}
f(t^{(1)}, \cdots, t^{(r)}) 
&=
\int_0^1
\prod_{j=1}^r
(t^{(j)}-1_{t<t^{(j)}})
dt
\\
&=
\sum_{j=0}^r 
(t^{(j+1)}-t^{(j)})
\left(\prod_{k=1}^{j}t^{(k)}\right)\left( \prod_{k=j+1}^r (t^{(k)}-1) \right),
\end{align*}
where we set $t^{(0)}\coloneqq 0$ and $t^{(r+1)}\coloneqq 1$.
In particular, for $r=1$, we get 
\[
f(t^{(1)}) = t^{(1)}(t^{(1)}-1) + (1-t^{(1)})t^{(1)}=0.
\]
It hence follows that Lemma \ref{lem:vanishing} continues to hold in the $g=0$-setting.
\end{remark}

	\section{Alternative interpretation of the results -- informal remarks}
	\subsection{Physical interpretation and the partition function of the Poisson Sigma Model on surfaces}
	Recall that the definition of $\Graphs_{\Sigma_g}$ depends on a choice of partition function, i.e., a Maurer--Cartan element $Z$, which enters into the differential. Let us make this dependence explicit and write 
	$\Graphs_{\Sigma_g}^Z$.
	Note that in \cite{CamposWillwacher2016} a specific such $Z=Z_{CW}$ is constructed so that $\Graphs_{\Sigma_g}^Z$ is a dg Hopf comodule model for $\FM_{\Sigma_g}$.
	In this paper we have shown that $\Graphs_{\Sigma_g}^{Z_{triv}}$, with $Z_{triv}$ the trivial Maurer--Cartan element \eqref{equ:Ztriv} is also a model for $\FM_{\Sigma_g}$. Hence we conclude in particular that we have a quasi-isomorphism 
	\[
	\BVGraphs_{\Sigma_g}^{Z_{triv}} \simeq \BVGraphs_{\Sigma_g}^{Z_{CW}},
	\]
	given by some zigzag.
	A priori we cannot conclude from this that $Z_{triv}$ is gauge equivalent to $Z_{CW}$. However, the deformation theory of $\BVGraphs_{\Sigma_g}$ is computed in \cite{Felder2019}, and it is shown that the graph complex dual to $G_{H_g}$ acts on $\BVGraphs_{\Sigma_g}$ in a faithful manner, up to homotopy.
	From this combined with our result we are then able to conclude that in fact $Z_{CW}$ and $Z_{triv}$ are gauge equivalent.
	
	We remark that our computation can hence physically be interpreted as a computation of the partition function of the Poisson $\sigma$-model (PSM) on the surfaces $\Sigma_g$. More informally, our specific method of computation can be seen as an algebro-topological version of working in a singular gauge akin to \cite{BCM2012}.

	\subsection{Higher genus associators}
	Drinfeld associators are algebraic objects defined by Drinfeld \cite{Drinfeld90} in connection with the study of quantum groups, and the absolute Galois group $\mathrm{Gal}(\bar \Q/\Q)$.
	Topologically speaking, we can understand a Drinfeld associator as a homotopy class of formality morphisms of the (framed) little disks operad.
	Similarly, higher genus versions of associators have been defined in the literature.	
	Topologically, we may define a genus $g$ associator to be a homotopy class of quasi-isomorphisms of $(\Omega(\FFM_{2}),\Omega(\FFM_{\Sigma_g}))$ and $(\BVc,\Mo_g)$.
	
	The main result of this paper can then be restated as saying that we provide a procedure to extend a Drinfeld associator to a genus $g$ associator.
	
	The set of Drinfeld associators is a torsor for the Grothendieck-Teichmüller group $\GRT$. The latter group may be defined topologically as the group of connected components of the homotopy automorphism group of a model of the framed little disks operad.
	To be concrete, $\GRT$ acts on the completed parenthesized chord diagrams model, cf. \cite{Tamarkin2002, FresseBook}.
	One can check similarly to Lemma \ref{lem:barnatan_cyc} that this action actually preserves the cyclic structure. 
		
	Similarly, on may define the higher genus Grothendieck-Teichm\"uller group $\GRT_g$ as the homotopy automorphisms of the pair $(\BVc,\Mo_g)$.
	One then has a forgetful map $\GRT_g\to \GRT$.
	It has shown by B. Enriquez \cite[Propositions 2.20, 4.7]{Enriquez2014} that this map has a one sided inverse for $g=1$.\footnote{Enriquez in fact has a slightly different definition of $\GRT_1$, but we shall ignore the difference, see also \cite{Felder2019}.}
	
	Now note that our construction of $\Mo_g$ out of the cyclic Hopf cooperad $\BVc$ was almost functorial in $\BVc$. The only non-functorial piece is that we fixed an augmentation dual to the choice of basepoint over which to take the fiber in \eqref{equ:FFM21qdef}.
	Unfortunately, the augmentation is generally not preserved by the $\GRT$ action -- except in the case $g=1$, where the basepoint can be taken to be the unit in $\FFM_2(1)$.
	Hence in that case we do obtain a $\GRT$-action by functoriality and hence a map $\GRT\to \GRT_{g=1}$, reproducing Enriquez' result.

	
	\printbibliography
	
	\listoftodos
	\todototoc
	
\end{document}